\PassOptionsToPackage{dvipsnames}{xcolor}
\documentclass[12pt]{article}

\usepackage{amsmath,amsthm,amsfonts}
\usepackage{mathtools}
\usepackage{latexsym}
\usepackage{tikz,tikz-cd}
\usepackage{comment}
\usepackage{tikzit}				

\tikzstyle{morphism}=[fill=white, draw=black, shape=rectangle]
\tikzstyle{medium box}=[fill=white, draw=black, shape=rectangle, minimum width=0.7cm, minimum height=0.7cm]
\tikzstyle{large morphism}=[fill=white, draw=black, shape=rectangle, minimum width=1.7cm, minimum height=1cm]
\tikzstyle{bn}=[fill=black, draw=black, shape=circle, inner sep=1.5pt]
\tikzstyle{state}=[fill=white, draw=black, regular polygon, regular polygon sides=3, minimum width=0.8cm, shape border rotate=180, inner sep=0pt]
\tikzstyle{medium state}=[fill=white, draw=black, regular polygon, regular polygon sides=3, minimum width=1.3cm, inner sep=0pt, shape border rotate=180]
\tikzstyle{large state}=[fill=white, draw=black, regular polygon, regular polygon sides=3, minimum width=2.2cm, shape border rotate=180, inner sep=0pt]
\tikzstyle{wide state}=[fill=white, draw=black, shape=isosceles triangle, minimum width=0.8cm, shape border rotate=270, inner sep=1.4pt, minimum height=0.5cm, isosceles triangle apex angle=80]
\tikzstyle{wn}=[fill=white, draw=black, shape=circle, inner sep=1.5pt]
\tikzstyle{blue morphism}=[fill=white, draw={rgb,255: red,15; green,0; blue,150}, shape=rectangle, text={rgb,255: red,15; green,0; blue,150}, tikzit category=blue]
\tikzstyle{red morphism}=[fill=white, draw={rgb,255: red,150; green,0; blue,2}, shape=rectangle, text={rgb,255: red,150; green,0; blue,2}, tikzit category=red]
\tikzstyle{blue state}=[fill=white, draw={rgb,255: red,15; green,0; blue,150}, shape=circle, regular polygon, regular polygon sides=3, minimum width=0.8cm, shape border rotate=180, inner sep=0pt, text={rgb,255: red,15; green,0; blue,150}, tikzit category=blue]
\tikzstyle{blue node}=[fill={rgb,255: red,15; green,0; blue,150}, draw={rgb,255: red,15; green,0; blue,150}, shape=circle, tikzit category=blue, inner sep=1.5pt]
\tikzstyle{blue}=[text={rgb,255: red,15; green,0; blue,150}, tikzit draw={rgb,255: red,191; green,191; blue,191}, tikzit category=blue, tikzit fill=white, inner sep=0mm]
\tikzstyle{blue wide state}=[fill=white, draw={rgb,255: red,15; green,0; blue,150}, text={rgb,255: red,15; green,0; blue,150}, shape=isosceles triangle, minimum width=0.8cm, shape border rotate=270, inner sep=1.4pt, minimum height=0.5cm, isosceles triangle apex angle=80]
\tikzstyle{red node}=[fill={rgb,255: red,150; green,0; blue,2}, draw={rgb,255: red,150; green,0; blue,2}, shape=circle, inner sep=1.5pt]
\tikzstyle{Purple node}=[fill={rgb,255: red,120; green,0; blue,120}, draw={rgb,255: red,120; green,0; blue,120}, text={rgb,255: red,120; green,0; blue,120}, shape=circle, inner sep=1.5pt]
\tikzstyle{red}=[text={rgb,255: red,150; green,0; blue,2}, inner sep=0mm, tikzit fill=white, tikzit draw={rgb,255: red,191; green,191; blue,191}]
\tikzstyle{purple}=[text={rgb,255: red,150; green,0; blue,150}, inner sep=0mm, tikzit fill=white, tikzit draw={rgb,255: red,191; green,191; blue,191}]
\tikzstyle{white morphism}=[fill=white, draw=white, shape=rectangle, tikzit draw={rgb,255: red,139; green,139; blue,139}]
\tikzstyle{leak morphism}=[fill=white, draw={rgb,255: red,120; green,0; blue,85}, shape=rectangle, text={rgb,255: red,120; green,0; blue,85}, tikzit category=leak]
\tikzstyle{leak}=[text={rgb,255: red,120; green,0; blue,85}, inner sep=0mm, tikzit fill=white, tikzit draw={rgb,255: red,191; green,191; blue,191}, tikzit category=leak]
\tikzstyle{leak node}=[fill={rgb,255: red,120; green,0; blue,85}, draw={rgb,255: red,120; green,0; blue,85}, shape=circle, inner sep=1.5pt, tikzit category=leak]
\tikzstyle{horiz state}=[fill=white, draw=black, regular polygon, regular polygon sides=3, minimum width=1cm, shape border rotate=90, inner sep=0pt]

\tikzstyle{arrow}=[->]
\tikzstyle{dashed box}=[-, dashed]
\tikzstyle{blue arrow}=[-, draw={rgb,255: red,15; green,0; blue,150}, tikzit category=blue]
\tikzstyle{red arrow}=[-, draw={rgb,255: red,150; green,0; blue,2}, tikzit category=red]
\tikzstyle{purple arrow}=[->, draw={rgb,255: red,120; green,0; blue,120}, >=stealth, shorten <=2pt, shorten >=2pt]
\tikzstyle{protected purple arrow}=[->, draw={rgb,255: red,120; green,0; blue,120}, >=stealth, shorten <=2pt, shorten >=2pt, preaction={line width=1.8pt, white, draw}]
\tikzstyle{mapsto}=[{|->}]
\tikzstyle{double wire}=[-, double]
\tikzstyle{curly brace}=[-, draw=none, tikzit draw={rgb,255: red,128; green,0; blue,128}]
\tikzstyle{protected}=[-, preaction={line width=1.8pt,white,draw}]
\tikzstyle{leak arrow}=[-, tikzit draw={rgb,255: red,150; green,0; blue,120}]
\tikzstyle{protected leak arrow}=[-, tikzit draw={rgb,255: red,150; green,0; blue,120}]
\tikzstyle{hollow arrow}=[-, very thin, white, preaction={line width=0.7pt,draw={rgb,255: red,120; green,0; blue,85}}, tikzit category=leak, tikzit draw={rgb,255: red,150; green,0; blue,120}]
\tikzstyle{protected hollow arrow}=[-, very thin, white, preaction={line width=0.7pt,draw={rgb,255: red,120; green,0; blue,85},preaction={line width=2.1pt,white,draw}}, tikzit category=leak, tikzit draw={rgb,255: red,150; green,0; blue,120}]
\tikzstyle{over arrow}=[-, black, preaction={draw=white, double}]

\usepackage{enumitem}
	\setitemize{itemsep=0pt,topsep=6pt}
\usepackage[T1,T2A]{fontenc}			
\usepackage{centernot}
\usepackage{xcolor}
\usepackage{amssymb,xparse}		
\usepackage{calc}
\usepackage{geometry}
\usepackage{authblk}	

\definecolor{myurlcolor}{rgb}{0,0,0.3}
\definecolor{mycitecolor}{rgb}{0,0.3,0}
\definecolor{myrefcolor}{rgb}{0.3,0,0}
\usepackage[pagebackref,draft=false]{hyperref}
\hypersetup{
	colorlinks,
	linkcolor=myrefcolor,
	citecolor=mycitecolor,
	urlcolor=myurlcolor}

\usepackage[noabbrev]{cleveref}
	\graphicspath{ {./figures/} }

\numberwithin{equation}{section}

\newtheorem{theorem}{Theorem}[section]
\newtheorem*{theorem*}{Theorem}
\newtheorem{proposition}[theorem]{Proposition}
\newtheorem{lemma}[theorem]{Lemma}
\newtheorem{corollary}[theorem]{Corollary}
\newtheorem{definition}[theorem]{Definition}
\newtheorem{assumption}[theorem]{Assumption}

\newtheorem{notation}[theorem]{Notation}
\theoremstyle{definition}
\newtheorem{example}[theorem]{Example}
\newtheorem{remark}[theorem]{Remark}



\newcommand{\into}{\hookrightarrow}

\newcommand{\ph}{\mathord{\rule[-0.05em]{0.6em}{0.05em}}}		
\newcommand{\phsm}{\mathord{\rule[-0.035em]{0.4em}{0.035em}}}		

\newcommand{\cat}[1]{{\mathsf{#1}}} 
\newcommand{\op}{\mathrm{op}}
\newcommand{\cD}{\mathsf{D}}		

\newcommand{\Kl}{\mathsf{Kl}}		

\newcommand{\id}{\mathrm{id}} 		

\newcommand{\N}{\mathbb{N}}
\newcommand{\R}{\mathbb{R}}

\newcommand{\B}{W}								
\definecolor{parametrized}{RGB}{15,0,150}
\newcommand{\param}[1]{{\color{parametrized}#1}}
\newcommand{\tensor}{\mathbin{\otimes}}

\tikzset{pullback/.style={minimum size=1.2ex,path picture={	
			\draw[opacity=1,black,-,#1] (-0.5ex,-0.5ex) -- (0.5ex,-0.5ex) -- (0.5ex,0.5ex);%
}}}

\newcommand{\cC}{\mathsf{C}}		
\renewcommand{\det}{\mathrm{det}}	
\newcommand{\ic}{\mathrm{ic}}
\newcommand{\samp}{\mathsf{samp}}	
\newcommand{\thunk}{\mathsf{thunk}}	
\newcommand{\force}{\mathsf{force}}	
\newcommand{\prsamp}{\mathsf{prsamp}} 

\newcommand{\Rel}{\mathsf{Rel}}

\newcommand{\FinStoch}{\mathsf{FinStoch}}
\newcommand{\Stoch}{\mathsf{Stoch}}
\newcommand{\BorelStoch}{\mathsf{BorelStoch}}
\newcommand{\as}[1]{
		\def\relstate{#1}%
		\ifx\relstate\empty
		  \text{a.s.}%
		\else
		  {#1\text{-a.s.}}%
		\fi
	}
\newcommand{\ase}[1]{=_{#1\text{-a.s.}}}	
\newcommand{\asinf}[1]{\succeq_{#1\text{-a.s.}}}		
\newcommand{\Bayesinf}{\succeq_{\mathrm{Bayes}}}	


\newcommand{\pass}{\checkmark}
\newcommand{\fail}{\times}
\newcommand{\safe}{\mathsf{s}}
\newcommand{\faulty}{\mathsf{f}}

\DeclareMathOperator{\cop}{copy}
\newcommand{\discard}{\mathrm{del}}

	\providecommand{\given}{}			
	\newcommand{\SetSymbol}[1][]{%
		\nonscript\;\,#1\vert
		\allowbreak
		\nonscript\;\,
		\mathopen{}
	}
	\DeclarePairedDelimiterX{\Set}[1]{\{}{\}}{%
		\renewcommand{\given}{\SetSymbol[\delimsize]}
		#1
	}
	\makeatletter
		\let\oldSet\Set
		\def\Set{\@ifstar{\oldSet}{\oldSet*}}
	\makeatother

\newcommand{\setmuskip}[2]{#1=#2\relax}
\setmuskip{\medmuskip}{4mu plus 2mu minus 2mu}

	
	\newcommand{\hyphenationsetting}{%
		\emergencystretch=0pt	
		\tolerance=2000			
		\pretolerance=1000		
		\righthyphenmin=4		
		\lefthyphenmin=4			
	}
	\hyphenationsetting

\title{Representable Markov Categories and\\ Comparison of Statistical Experiments\\ in Categorical Probability}

\author[1]{Tobias Fritz\thanks{tobias.fritz@uibk.ac.at}}
\author[2,3]{Tom{\'a}{\v{s}} Gonda\thanks{tomas.gonda@uibk.ac.at}}
\author[4]{Paolo Perrone\thanks{paolo.perrone@cs.ox.ac.uk}}
\author[5]{Eigil Fjeldgren Rischel\thanks{eigil.rischel@strath.ac.uk}}

\affil[1]{Department of Mathematics, University of Innsbruck, Austria}
\affil[2]{Perimeter Institute for Theoretical Physics, Waterloo ON, Canada}
\affil[3]{School of Physics and Astronomy, University of Waterloo, Canada}
\affil[4]{Massachusetts Institute of Technology, Cambridge MA, U.S.A.}
\affil[5]{University of Strathclyde, Glasgow, Scotland}

\begin{document}

\newgeometry{top=2cm,bottom=2cm}
\maketitle
\thispagestyle{empty}

\begin{abstract}
		\emph{Markov categories} are a recent categorical approach to the mathematical foundations of probability and statistics. 
		Here, this approach is advanced by stating and proving equivalent conditions for second-order stochastic dominance, a widely used way of comparing probability distributions by their spread. 
		Furthermore, we lay the foundation for the theory of comparing statistical experiments within Markov categories by stating and proving the classical Blackwell--Sherman--Stein Theorem.
		Our version not only offers new insight into the proof, but its abstract nature also makes the result more general, automatically specializing to the standard Blackwell--Sherman--Stein Theorem in measure-theoretic probability as well as a Bayesian version that involves prior-dependent garbling.
		Along the way, we define and characterize \emph{representable} Markov categories, within which one can talk about Markov kernels to or from spaces of distributions.
		We do so by exploring the relation between Markov categories and Kleisli categories of probability monads.
\end{abstract}
{\footnotesize\textbf{\textit{Keywords---}}Categorical probability; Markov category; Kleisli category; Blackwell--Sherman--Stein Theorem; Second-order stochastic dominance; Comparison of statistical experiments}

\restoregeometry

\newpage
\tableofcontents

\section{Introduction}

	Traditionally, the foundations of mathematical statistics are rooted in measure theory and measure-theoretic probability.
	More generally, mathematical statistics and probability theory are typically considered as mathematical subjects of a clearly analytical nature. 
	While this has worked well in practice, it is also often the case in mathematics that higher abstraction leads ultimately to deeper understanding, greater generality and ultimately facilitates the development of results and methods of greater complexity.

	This is what the growing field of categorical probability attempts to do by developing a category-theoretical foundation for probability theory and mathematical statistics. 
	A promising approach is provided by \emph{Markov categories} which, in line with categorical thinking, focuses on the morphisms involved in probabilistic reasoning, namely stochastic maps (or Markov kernels). 
	There is growing evidence that Markov categories can serve both as a categorical foundation for, as well as a generalization of, ordinary measure-theoretic probability theory.
	Indeed, similar to how a computer can be programmed either in terms of low-level machine code or in a more accessible and hardware-independent abstract language, it seems to be the case that probability theory likewise can be practiced either in concrete analytical terms based on Kolmogorov's axioms, or in a more abstract \emph{synthetic} form based on the structural axioms of Markov categories.
		
	More specifically, Markov categories allow one to study and make use of:
	\begin{itemize}
		\item Bayes' theorem and Bayesian updating: This was first considered by Golubtsov in~\cite{golubtsov2002kleisli} and rediscovered recently by Cho and Jacobs~\cite{chojacobs2019strings}, with further results on the dagger functor structure of Bayesian inversion in the first named author's~\cite{fritz2019synthetic}.
		\item Conditional independence: This was also defined within this framework by Cho and Jacobs~\cite{chojacobs2019strings}, and more generally in~\cite[Section~12]{fritz2019synthetic}.
		\item Almost sure equality: Again, first done by Cho and Jacobs~\cite{chojacobs2019strings} and then generalized and developed further in~\cite[Section~13]{fritz2019synthetic}.
		\item Sufficient statistics: Some of the basic theorems on sufficient statistics were proven abstractly in~\cite[Sections~14--16]{fritz2019synthetic}.
		\item Kolmogorov extension theorem and 0/1-laws: The \emph{Kolmogorov products} developed by the first-named and last-named author, which arise as infinite products in Markov categories formalizing the Kolmogorov extension theorem, have facilitated synthetic proofs of the classical 0/1-laws of Kolmogorov and Hewitt--Savage~\cite{fritzrischel2019zeroone}.
		\item patterson2020models has developed an algebraic approach to statistical models, drawing and exploiting relations to categorical logic~\cite{patterson2020models}.
	\end{itemize}
	Of course, this only lists those aspects of probability theory and statistics which have been developed synthetically up to the present time and to our knowledge.
	The present paper has two goals: first, to continue the development of the general categorical theory; and second, to add two more items to the above list, namely second-order stochastic dominance and the classical Blackwell--Sherman--Stein Theorem on the comparison of statistical experiments. 
	We now summarize our results on both of these goals, which are related through the latter applications drawing on the former categorical developments.

\paragraph*{Outline and results.}

	In practice, Markov categories often arise as Kleisli categories of affine symmetric monoidal monads. 
	For example, this is the case for $\BorelStoch$, the category of standard Borel spaces and measurable Markov kernels, or equivalently the Kleisli category of the Giry monad on the category of standard Borel spaces and measurable maps.\footnote{Or yet equivalently Polish spaces and measurable maps.} 
	In \Cref{sec:Rep_Markov}, we clarify the relation between Markov categories and Kleisli categories of this type, namely Kleisli categories of affine symmetric monoidal monads on categories with finite products.
	We find that for a Markov category $\cC$, the question of whether $\cC$ arises as a Kleisli category like this is closely linked to the existence of a right adjoint to the inclusion functor $\cC_{\det} \hookrightarrow \cC$, where $\cC_{\det}$ is the cartesian monoidal subcategory of deterministic morphisms in $\cC$; for if $\cC$ is supposed to be the Kleisli category of a monad on $\cC_{\det}$, then this right adjoint must exist for purely formal reasons.
	The existence of such $P \colon \cC \to \cC_\det$ amounts to the natural bijection 
	\begin{equation}
		\cC_{\det}(A, PX) \cong \cC(A, X),
	\end{equation}
	which we interpret as the existence of a \emph{distribution functor} that, for $A = I$, identifies the deterministic morphisms $I \to PX$ with the ``distributions'' over $X$ (morphisms $I \to X$).
	More generally, (not necessarily deterministic) morphisms $A \to X$ can be thought of as being ``classified'' by deterministic morphisms $A \to PX$.
	Not every Markov category has such a distribution functor. 
	For example, the category of finite sets and stochastic matrices $\FinStoch$ does not, since $\FinStoch(A,X)$ is generically infinite, while instead its putative counterpart $\FinStoch_{\det}(A, PX)$ would necessarily have to be finite.

	Our results on the connection between Markov categories and Kleisli categories are then as follows:
	\begin{itemize}
		\item We prove that if $P$ is an affine symmetric monoidal monad on a cartesian monoidal category $\cD$, and $P$ satisfies a certain pullback condition, then the Kleisli category $\Kl(P)$ is a Markov category such that the subcategory of deterministic morphisms $\Kl(P)_{\det}$ is exactly the original category $\cD$.
		\item Conversely, if a Markov category $\cC$ has a distribution functor $P$, meaning a right adjoint for the inclusion $\cC_{\det} \hookrightarrow \cC$, then the induced monad on $\cC_\det$ satisfies the pullback condition, and $\cC$ is isomorphic to the Kleisli category of $P$  (\Cref{markov_as_kleisli}).\footnote{A similar reconstruction of strong monads from their Kleisli categories seems to be known~\cite{callbyvalue}. Nevertheless, our result is not an immediate consequence of this construction.}
	\end{itemize}
	We end \Cref{sec:Rep_Markov} by studying the interaction between the distribution functor $P$ and the notion of almost sure equality in $\cC$.
	If these are compatible in a suitable sense, then we say that $\cC$ is \emph{\as{}-compatibly representable}.
	Distribution functors and \as{}-compatible representability will then play a central role in the subsequent two sections that focus on applications of the general theory.

	In \Cref{sec:2ndorder}, we provide a categorical description and generalization of \emph{second-order stochastic dominance}, which is a way of comparing probability distributions with respect to how ``spread out'' they are. 
	This notion also appears in Blackwell--Sherman--Stein (BSS) theorem, a classical and widely used fundamental result that connects it to the question of comparing statistical experiments in terms of their informativeness about the tested hypotheses.

	In \Cref{sec:blackwell}, we introduce the informativeness preorder in Markov categories, and prove \Cref{thm:informativeness_vs_sufficiency} that characterizes it in terms of the notions of sufficient statistics and conditional independence.
	Most of this section, however, is devoted to a categorical version of the BSS theorem. 
	In fact, we have a few variations thereof. 
	The closest to the standard version that concerns a \emph{discrete} parameter space is \Cref{thm:dilation_criterion_discrete}, but it more generally applies to any \as{}-compatibly representable Markov category other than $\BorelStoch$.
	In our presentation, this result arises as a corollary of \Cref{thm:dilation_criterion} for more general parameter spaces, which considers a fixed prior distribution and compares experiments with respect to whether they are ``almost surely more informative''.
	Concretely, in the context of standard Borel spaces, \Cref{thm:dilation_criterion} says the following.
	\begin{theorem*}
		Let $X$, $Y$ and $\Theta$ be standard Borel spaces, and let $(f_\theta)_{\theta \in \Theta}$ and $(g_\theta)_{\theta \in \Theta}$ be families of probability measures on $X$ and $Y$ respectively, parametrized measurably in $\theta$. Let $m$ be a probability measure on $\Theta$. Then the following are equivalent:
		\begin{enumerate}
			\item There is a Markov kernel $c \colon X \to Y$ such that $g_\theta = c f_\theta$ holds for $m$-almost all~$\theta$.
			\item The standard measures\footnotemark{} $\hat{f}_m$ and $\hat{g}_m$ (probability measures on $P\Theta$---the space of probability measures) are such that $\hat{g}_m$ second-order dominates $\hat{f}_m$.
		\end{enumerate}
	\end{theorem*}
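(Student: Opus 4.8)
The plan is to recognize this as the $\BorelStoch$ instance (with $P$ the Giry monad) of a statement valid in every \as{}-compatibly representable Markov category with conditionals, and to prove it there. Throughout I use the distribution functor $P$ to identify a Markov kernel $k \colon A \to B$ with the deterministic morphism $\overline k \colon A \to PB$ that classifies it; I write $f^\dagger \colon X \to \Theta$ and $g^\dagger \colon Y \to \Theta$ for Bayesian inverses of $f$ and $g$ relative to $m$, and $E_\Theta \colon PP\Theta \to P\Theta$ for the monad multiplication, which is the barycenter map. Recall that the standard measure is $\hat f_m = (\overline{f^\dagger})_*(fm) \in PP\Theta$, the law of the posterior under the predictive distribution $fm$ on $X$; \as{}-compatible representability is exactly what guarantees that $\hat f_m$ and $\hat g_m$ are well defined, since $f^\dagger$ and $g^\dagger$ are only determined up to \as{}-equality. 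Finally, by the results of \Cref{sec:2ndorder}, condition (2) unpacks to the existence of a \emph{mean-preserving} (``martingale'') kernel $d \colon P\Theta \to P\Theta$ --- one with $E_\Theta \circ \overline d = \id_{P\Theta}$ --- satisfying $d_* \hat g_m = \hat f_m$; that is, $\hat f_m$ is a dilation of $\hat g_m$. So the task is to trade a garbling $c$ for such a $d$ and vice versa.

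For $(1) \Rightarrow (2)$: assume $g = cf$ holds $m$-a.s., and form the joint law in which $\theta \sim m$, $x \mid \theta \sim f$, $y \mid x \sim c$, so that $y$ is conditionally independent of $\theta$ given $x$. Then the $P\Theta$-valued random elements $\Lambda' \coloneqq f^\dagger_x$ and $\Lambda \coloneqq g^\dagger_y$ have laws $\hat f_m$ and $\hat g_m$. Because of the conditional independence, the conditional law of $\theta$ given $y$ is the average, over the conditional law of $x$ given $y$, of the conditional laws $f^\dagger_x$; equivalently, the barycenter of the conditional law of $\Lambda'$ given $y$ equals $\Lambda$. Since $\Lambda$ is a function of $y$, the tower property then gives that the barycenter of the conditional law of $\Lambda'$ given $\Lambda$ is again $\Lambda$. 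Taking $d$ to be this conditional law of $\Lambda'$ given $\Lambda$ yields a mean-preserving kernel with $d_* \hat g_m = \hat f_m$, which is (2). Each step is a routine application of the \as{}-equality and conditional-independence calculus of~\cite{fritz2019synthetic}, carried over to the representable setting.

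For $(2) \Rightarrow (1)$: given a mean-preserving $d$ with $d_* \hat g_m = \hat f_m$, the bridge is the \emph{canonical experiment}. For $\nu \in PP\Theta$ with $E_\Theta(\nu) = m$, let $h_\nu \colon \Theta \to P\Theta$ be the Bayesian inverse relative to $m$ of the joint obtained by sampling $\lambda \sim \nu$ and then $\theta \sim \samp_\Theta(\lambda)$, where $\samp_\Theta \colon P\Theta \to \Theta$ is the kernel classified by $\id_{P\Theta}$; by construction the conditional law of $\theta$ given $\lambda$ in this joint is $\lambda$ itself, so the posterior map of $h_\nu$ is $\id_{P\Theta}$ and its standard measure is $\nu$. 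Three \as{}-equalities then finish the proof. (a) $\overline{f^\dagger} \circ f = h_{\hat f_m}$ $m$-a.s., so $h_{\hat f_m}$ is a garbling of $f$. (b) $(\overline{g^\dagger})^\dagger \circ h_{\hat g_m} = g$ $m$-a.s., where $(\overline{g^\dagger})^\dagger \colon P\Theta \to Y$ is the Bayesian inverse of $\overline{g^\dagger}$ relative to $gm$, so $g$ is a garbling of $h_{\hat g_m}$. (c) $d^\dagger \circ h_{\hat f_m} = h_{\hat g_m}$ $m$-a.s., where $d^\dagger \colon P\Theta \to P\Theta$ is the Bayesian inverse of $d$ relative to $\hat g_m$: this is the crux, because in the joint of $\theta$, the output $\lambda'$ of $h_{\hat f_m}$, and the output $\lambda$ of $d^\dagger$, one has $\theta$ conditionally independent of $\lambda$ given $\lambda'$ with the conditional law of $\theta$ given $\lambda'$ equal to $\lambda'$, so the conditional law of $\theta$ given $\lambda$ is the barycenter $E_\Theta$ of the conditional law of $\lambda'$ given $\lambda$ --- which is $\lambda$ precisely because $d$ is mean-preserving --- while the $\lambda$-marginal is $d^\dagger_* \hat f_m = \hat g_m$. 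Composing (a)--(c), the kernel $c \coloneqq (\overline{g^\dagger})^\dagger \circ d^\dagger \circ \overline{f^\dagger} \colon X \to Y$ satisfies $cf = g$ $m$-a.s., which is (1).

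The principal obstacle is the direction $(2) \Rightarrow (1)$; in the classical measure-theoretic proof this is Stein's half, where one resorts to a minimax or Hahn--Banach separation argument. In the present approach the analytic input is isolated into two facts: the existence of all the Bayesian inverses used above (the hypothesis that the Markov category has conditionals; for $\BorelStoch$ this is the disintegration theorem), and the identification in \Cref{sec:2ndorder} of second-order dominance with the existence of a dilation kernel. Granting these, $(2) \Rightarrow (1)$ becomes a finite chain of \as{}-equalities, with the canonical experiment $h_\nu$ mediating between standard measures on $P\Theta$ and experiments valued in $P\Theta$. The one delicate point is bookkeeping: each Bayesian inverse must be taken with respect to the correct prior, and the resulting \as{}-equalities must be checked to compose --- which is where \as{}-compatible representability is used.
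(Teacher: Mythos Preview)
Your proposal is correct and is essentially the paper's proof in probabilistic dress: both directions pass between garbling and dilation by taking a Bayesian inverse, with the verification resting on \Cref{lem:stnd_enc} (your ``canonical experiment'' $h_{\hat f_m}$ is just the standard experiment $\hat f$ by definition, and your step~(b) is exactly \Cref{prop:standard_experiment}). One minor slip: second-order dominance only requires $d$ to be mean-preserving $\hat g_m$-almost surely rather than identically, but your argument in~(c) only uses the a.s.\ version anyway.
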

	\footnotetext{Standard measures have been introduced in \cite{blackwell1951comparison}.
		Here, we provide a synthetic definition in \Cref{sec:BSS}.}%
	In this formulation of the BSS Theorem, we do not need to assume that the parameter space $\Theta$ be finite or even countable.
	
	We then present a completely prior-independent version of the BSS theorem in \Cref{sec:blackwell3}. 
	This result avoids the need for a prior by effectively considering all priors at once.
	In our categorical formulation, it turns out to be a special case of the earlier \Cref{thm:dilation_criterion}; but when instantiated in $\BorelStoch$, we obtain the following statement.
	\begin{theorem*}
		Let $X$, $Y$ and $\Theta$ be standard Borel spaces, and let $(f_\theta)_{\theta \in \Theta}$ and $(g_\theta)_{\theta \in \Theta}$ be families of probability measures on $X$ and $Y$ respectively, parametrized measurably in $\theta$. Then the following are equivalent:
		\begin{enumerate}
			\item There is a family of Markov kernels $(c_m \colon X \to Y)_{m \in P\Theta}$, depending measurably on the prior $m$, such that $g_\theta = c f_\theta$ holds for $m$-almost all $\theta$ and all $m \in P\Theta$.
			\item The standard measures $\hat{f}_m$ and $\hat{g}_m$ are such that $\hat{g}_m$ second-order dominates $\hat{f}_m$ for every choice of prior $m \in P\Theta$, as witnessed by a family of dilations $(t_m)_{m \in P\Theta}$ that depend measurably on $m$.
		\end{enumerate}
	\end{theorem*}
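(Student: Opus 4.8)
The plan is to obtain this statement as a single instance of \Cref{thm:dilation_criterion}, applied not in $\BorelStoch$ itself but in its \emph{$P\Theta$-relative} version. Write $S \coloneqq P\Theta$ and let $\cC$ be the Markov category whose objects are standard Borel spaces and whose morphisms $A \to B$ are the Markov kernels $S \times A \to B$ — equivalently, measurable families $(c_m \colon A \to B)_{m \in S}$ of kernels — composed fiberwise in $m$. The first task is to verify that $\cC$ is an \as{}-compatibly representable Markov category: using the results of \Cref{sec:Rep_Markov} this reduces to checking that the relative Giry monad (fiberwise probability measures over $S$) is affine, symmetric monoidal, and satisfies the pullback condition, all of which it inherits from $P$ on $\BorelStoch$. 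One then records that $\cC_\det$ is the category of standard Borel spaces with morphisms the measurable maps $S \times A \to B$, that the distribution functor of $\cC$ is fiberwise $X \mapsto PX$ with sampling map fiberwise equal to $\samp_X$, and that almost sure equality in $\cC$ is fiberwise almost sure equality. One also notes that for each $m \in S$ there is a Markov functor $\mathrm{ev}_m \colon \cC \to \BorelStoch$, the identity on objects, that evaluates a family at $m$ and commutes with the distribution functors.

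The device that turns the ``for all priors $m$'' quantifier into a single synthetic condition is the \emph{tautological prior}. A distribution on $\Theta$ in $\cC$ is a measurable map $S = P\Theta \to P\Theta$, and we take $\tau \coloneqq \id_{P\Theta}$, so that $\mathrm{ev}_m(\tau)$ is literally the measure $m$ for every $m$. The key computation, which I would carry out directly from the fiberwise descriptions of composition, $\cop$ and $\otimes$ in $\cC$, is that for parallel morphisms $u, v \colon \Theta \to Z$ in $\cC$ one has $u \ase{\tau} v$ if and only if $u_m \ase{m} v_m$ in $\BorelStoch$ for \emph{every} $m \in P\Theta$. Now apply \Cref{thm:dilation_criterion} inside $\cC$, with parameter object $\Theta$, with experiments $f^{\cC}$ and $g^{\cC}$ the $m$-independent families $(m \mapsto f_\bullet)$ and $(m \mapsto g_\bullet)$ — where $f_\bullet \colon \Theta \to X$ and $g_\bullet \colon \Theta \to Y$ are the given experiments — and with prior $\tau$.

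It remains to read off what the two equivalent conditions say in this situation. Condition~(1) of \Cref{thm:dilation_criterion} asks for a morphism $c \in \cC(X, Y)$ — that is, a measurable family $(c_m \colon X \to Y)_{m \in P\Theta}$ — with $g^{\cC} \ase{\tau} c\, f^{\cC}$; since $(c\, f^{\cC})_m = c_m \circ f_\bullet$, the key computation turns this into exactly item~(1) of the present theorem. For condition~(2), one applies the functors $\mathrm{ev}_m$: because the synthetic standard measure of \Cref{sec:BSS} is built solely from Bayesian inversion and the representability structure, all of which $\mathrm{ev}_m$ preserves, one obtains $\mathrm{ev}_m\bigl(\widehat{f^{\cC}}_\tau\bigr) = \hat f_m$ and likewise for $g$, so the standard measures in $\cC$ are precisely the families $(m \mapsto \hat f_m)$ and $(m \mapsto \hat g_m)$. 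A dilation witnessing second-order dominance in $\cC$, in the sense of \Cref{sec:2ndorder}, is by definition a morphism $t \in \cC(P\Theta, P\Theta)$, i.e.\ a measurable family $(t_m)$, that is mean-preserving fiberwise — since $\samp^{\cC}$ is fiberwise $\samp$ — and that relates the two standard measures fiberwise; this is exactly item~(2), including the measurable dependence of $(t_m)$ on $m$. The equivalence (1)$\,\Leftrightarrow\,$(2) in $\cC$ is thus the asserted equivalence.

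I expect the main obstacle to be the first step: checking in full that the $P\Theta$-relative category $\cC$ satisfies \emph{all} hypotheses of \Cref{thm:dilation_criterion} — that the relative Giry monad really is affine symmetric monoidal with the required pullback property over the base $S$, and, more delicately, that \as{}-compatible representability survives the relativization. A secondary subtlety lies in translating condition~(2): one must confirm that ``standard measure'' and ``dilation'' are genuinely preserved by the functors $\mathrm{ev}_m$, so that reading them fiberwise is legitimate, while bearing in mind that $\mathrm{ev}_m$ does \emph{not} preserve almost sure equality — hence the almost-sure content of both conditions must be extracted through the tautological-prior computation rather than through the evaluation functors.
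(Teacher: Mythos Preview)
Your proposal is correct and is essentially the paper's own approach: your $P\Theta$-relative category is exactly the parametric Markov category $\cC_{P\Theta}$ of \Cref{sec:parametric}, and your tautological prior $\tau$ is the paper's $\param{\prsamp}$ (represented by $\samp_\Theta$), so that the result drops out as \Cref{thm:dilation_criterion} instantiated in $\cC_{P\Theta}$ with prior $\param{\prsamp}$, which is precisely \Cref{thm:dilation_criterion2}. Your main worry---that \as{}-compatible representability and conditionals survive the relativization---is handled in the paper by \Cref{lem:parametric_ascr} (via the sampling cancellation property) and the lemma at the end of \Cref{sec:parametric}, so you need not go through the relative Giry monad directly.
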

	Moreover, as we show in \Cref{prop:garbling_counterexample}, these conditions are not in general equivalent to $f$ being more informative than $g$ with respect to a \emph{prior-independent} garbling map.

\paragraph*{Outlook.}

	Given the relevance of the theory of comparison of experiments in a wide array of situations, such as hypothesis testing or error correction, proving versions of celebrated results---such as the BSS Theorem---in the abstract context of Markov categories leads to a greater level of generality which has the potential for new domains of applications.
	The understanding of these results in a synthetic way also has the potential to overcome some of the limitations of the standard approaches, such as the discreteness of the parameter spaces involved.
	
	With the recent development of quantum Markov categories~\cite{parzygnat2020inverses}, it is conceivable that one could obtain a synthetic version of the quantum BSS Theorem \cite{buscemi2012comparison} and related results, with potential applications to quantum hypothesis testing or quantum error correction.
	
	Finally, the categorical approach also lends itself to the considerations of variants of the theory in which additional restrictions are placed on the garbling maps.
	For example, such variations can be studied under the hood of resource theories of distinguishability as introduced in \cite[Appendix C]{gonda2019monotones}.
	Many interesting restrictions arise from requiring equivariance of the garbling maps with respect to group actions.
	Others include adaptive garbling maps or garbling via independent action of multiple agents, both of which are considered in \cite{de2018blackwell}.
	Although we have not done this yet, it should be straightforward to instantiate \Cref{thm:dilation_criterion} and \Cref{thm:dilation_criterion2} in suitable categories, so as to obtain measure-theoretic BSS theorems which apply in such contexts.

\paragraph*{Acknowledgments.}

	We thank Robert Furber for helpful feedback on measure-theoretic aspects, Luciano Pomatto for helpful feedback on a draft, Jean-Simon Pacaud Lemay for pointers to the literature, and an anonymous referee for additional detailed feedback on an earlier version.
	Research for the first author is supported by FWF (Austrian Science Fund) P 35992-N.
	Research for the third author is funded by AFOSR grants FA9550-19-1-0113 and FA9550-17-1-0058.
	Research for the second author is supported by NSERC Discovery grant RGPIN 2017-04383, and by the Perimeter Institute for Theoretical Physics.
	Research at Perimeter Institute is supported in part by the Government of Canada through the Department of Innovation, Science and Economic Development Canada and by the Province of Ontario through the Ministry of Colleges and Universities.

\section{Markov Categories}\label{sec:Markov_cat}

\subsection{Definition of Markov Categories and Basic Theory}

	We now recall the definition of Markov category. 
	As far as we know, it was first proposed by Golubtsov as \emph{category of information transformers} in slightly different form~\cite{golubtsov2002kleisli},
	used implicitly in Fong's work on Bayesian networks~\cite{fong2012causaltheories},
	and rediscovered recently by Cho and Jacobs as \emph{affine CD-categories}~\cite{chojacobs2019strings}.
	The simpler term \emph{Markov category} was subsequently coined in~\cite{fritz2019synthetic}, based on the idea that Markov categories are abstract generalizations of the category of Markov kernels.

	\begin{definition}\label{markov_cat}
		A \emph{Markov category} $\cC$ is a semicartesian\footnote{Recall that this means that the monoidal unit object $I$ is terminal in $\cC$, among several equivalent characterizations; see~\cite[Theorem~3.5]{gerhold2022categorial}.} symmetric monoidal category where every object $X \in \cC$ is equipped with a distinguished morphism
		\begin{equation}
			\label{comultiplication}
			\tikzfig{comultiplication}
		\end{equation}
		which, together with the unique morphism $\discard_{X} \colon X \to I$, makes $X$ into a commutative comonoid, and such that
		\begin{equation}
			\label{multiplicativity}
			\tikzfig{multiplicativity}
		\end{equation}
		for all $X, Y \in \cC$.
	\end{definition}
	
	Throughout this manuscript, $\cC$ denotes a Markov category. 

	Among the prototypical examples of a Markov category is $\BorelStoch$, the category of standard Borel spaces and measurable Markov kernels. 
	A more basic example is $\FinStoch$, the category of finite sets and stochastic matrices. 
	In both cases, the comultiplications $\cop_X \colon X \to X \otimes X$ are given by the diagonals $x \mapsto \delta_{(x,x)}$, assigning to every element $x \in X$ the Dirac delta distribution $\delta_{(x,x)}$; this is the stochastic way to talk about copying. 
	Other examples of Markov categories can be obtained from categories of relations, such as $\Rel$, by restricting to relations $R \colon X \rightsquigarrow Y$ which have the property that for every $x \in X$ there is $y \in Y$ with $x R y$; this is the relational analogue of the normalization of probability.
	This results in a Markov category with respect to the usual cartesian product as monoidal structure, and the copy maps are again given by the obvious diagonals. 
	Another interesting class of examples arises by noting that diagram categories of Markov categories are again Markov categories (when suitably defined~\cite[Section~7]{fritz2019synthetic}), and we expect that this can be used in future work as a basis for a synthetic theory of stochastic processes.
	
\begin{definition}\label{def:deterministic}
	A morphism $f \colon X \to Y$ in $\cC$ is \emph{deterministic} if it respects the copy maps:
	\begin{equation}
		\tikzfig{multiplication_natural}
	\end{equation}
	The subcategory of $\cC$ that consists of its deterministic morphisms is denoted by $\cC_{\det}$.
\end{definition}

	This type of condition goes back to the seminal paper of Carboni and Walters on cartesian bicategories~\cite{carboniwalters1987bicartesian}. 
	Intuitively, it means that applying $f$ to two independent copies of its input is guaranteed to result in the same pair of output values than applying $f$ directly to the input and copying its output. $\cC$ is a cartesian monoidal category with respect to the monoidal structure inherited from $\cC$, and all structure morphisms of $\cC$, including the copy maps, are in $\cC_{\det}$~\cite[Remark~10.13]{fritz2019synthetic}.

	Other key notions within Markov categories that we use in \Cref{sec:2ndorder,sec:blackwell} include conditionals, Bayesian inverses, almost sure equality, and domination (in the sense of absolute continuity).
	All but the last of these notions have been introduced in earlier works \cite{chojacobs2019strings,fritz2019synthetic}.
	We now recall their definitions.
	\begin{definition}\label{def:conditional}
		Given $f \colon A \to X \tensor Y$ in $\cC$, a morphism \mbox{$f_{|X} \colon X \tensor A \to Y$} in $\cC$ is called a \emph{conditional} of $f$ with respect to $X$ if the equation
		\begin{equation}\label{eq:conditional}
			\tikzfig{conditional}
		\end{equation}
		holds.
		We say that $\cC$ \emph{has conditionals} provided that such a conditional exists for all objects $A,X,Y \in \cC$ and all $f \colon A \to X \otimes Y$ in $\cC$.
	\end{definition}

	We can also consider conditionals of $f \colon A \to X \otimes Y$ with respect to $Y$, which are defined in the analogous way. Using the symmetry of $\cC$ shows that these automatically exist if $\cC$ has conditionals.

	\begin{example}
		\label{ex:borelstoch_has_conds}
		$\BorelStoch$ has conditionals~\cite[Example~11.7]{fritz2019synthetic}. As far as we know at the moment, the earliest reference for this measure-theoretic fact is in Kallenberg's textbook on random measures~\cite[Theorem~1.25]{kallenberg2017randommeasures}.
	\end{example}
	
	\begin{definition}\label{def:Bayesian_inverse}
		Given two morphisms $m \colon I \to A$ and $f \colon A \to X$, a \emph{Bayesian inverse} of $f$ with respect to (the prior) $m$ is a conditional of
		\begin{equation}\label{eq:fm}
			\tikzfig{fm}
		\end{equation}
		with respect to $X$.
	\end{definition}
	The choice of a prior is often clear from context, so we denote a Bayesian inverse of $f$ simply by $f^{\dagger} \colon X \to A$ with the dependence on $m$ left implicit. Thus a Bayesian inverse $f^{\dagger}$ is defined to be a morphism satisfying the equation:
	\begin{equation}\label{eq:bayesian_inverse}
		\tikzfig{bayesian_inverse}
	\end{equation}
	Even though conditionals and Bayesian inverses are generally not unique when they exist, it is clear from the definition that they \emph{are} unique up to almost sure equality \cite[Proposition~13.6]{fritz2019synthetic}, which in general is defined as follows.
	\begin{definition}\label{def:ase}
		Given any morphism $h \colon A \to X$, we say that any two parallel $f, g \colon X \to Y$ are $h$-almost surely equal, denoted by $f \ase{h} g$, if we have
		\begin{equation}\label{eq:ase}
			\tikzfig{ase}
		\end{equation}
	\end{definition}
	\begin{example}\label{ex:bs_ase}
		In the context of $\BorelStoch$, \Cref{def:ase} recovers the expected notion of equality almost surely as has been shown in \cite[Proposition 5.4]{chojacobs2019strings}.
		In particular, given Markov kernels $f,g \colon X \to Y$ and $\nu \colon I \to X$, the relation $f \ase{\nu} g$ means exactly that for all $S \in \Sigma_X$ and $T \in \Sigma_Y$, we have
		\begin{equation}\label{eq:bs_ase}
			\int_S f(T|x) \, \nu(dx) = \int_S g(T|x) \, \nu(dx),
		\end{equation}
		or equivalently that the integrands $f(T|\ph)$ and $g(T|\ph)$ are $\nu$-almost everywhere equal for all $T$.
	\end{example}
	
	The following notion of measure domination is new in the context of Markov categories.
	We consider this definition tentative for the moment; we will be using it in this form in the present paper, but note that we may adopt a different variant of this definition in future work.
	\begin{definition}\label{def:domination}
		Given two morphisms $\mu, \nu \colon I \to X$, we say that $\mu$ is \emph{absolutely continuous} with respect to $\nu$, denoted $\nu \gg \mu$ or $\mu \ll \nu$, if for all objects $Y$ and all morphisms $f,g \colon X \to Y$ we have
		\begin{equation}\label{eq:domination}
			f \ase{\nu} g  \quad \implies \quad f \ase{\mu} g.
		\end{equation}	
	\end{definition}
	
	\begin{example}
		\label{ex:bs_domination}
		In $\BorelStoch$, \Cref{def:domination} recovers the standard notion of domination of probability measures (also known as absolute continuity preorder), given by the condition that for all measurable sets $S \in \Sigma_X$, we have
		\begin{equation}
			\label{standard_dominance}
			\nu(S) = 0 \quad \implies \quad \mu(S) = 0.
		\end{equation}
		
		To prove that this is indeed the case, suppose first that condition~\eqref{standard_dominance} holds. 
		One can then replace $\nu$ with $\mu$ in \cref{eq:bs_ase}, so that $f \ase{\nu} g$ indeed implies $f \ase{\mu} g$ as necessary to conclude $\nu \gg \mu$ according to \Cref{def:domination}.

		In the converse direction, suppose that $\nu \gg \mu$ holds in the sense of \Cref{def:domination}, and that $\nu(S) = 0$ for some $S \in \Sigma_X$. 
		Consider $f$ and $g$ to be the Markov kernels associated to the measurable functions $1_S \colon X \to \{0,1\}$ and $X \to \{0,1\}, \: x \mapsto 0$ respectively. 
		Then we have $f \ase{\nu} g$ by $\nu(S) = 0$. 
		However, together with $\nu \gg \mu$ this gives $f \ase{\mu} g$, which is just a different way to write $\mu(S) = 0$ given our choice of $f$ and $g$.
	\end{example}

	\subsection{Parametric Markov Categories}\label{sec:parametric}

		In order to demonstrate the power of the synthetic treatment of the notions of second-order stochastic dominance and comparison of statistical experiments later, we use the following new class of Markov categories throughout this paper.
		
		Given any Markov category $\cC$ and any object $\B \in \cC$, we now define a new Markov category $\cC_{\B}$ which we call the \emph{Markov category parametrized by $\B$}, or simply a \emph{parametric Markov category} when referring to no particular choice of $\B$.
		This is essentially a known construction for symmetric monoidal categories that has been called \emph{comonoid indexing}~\cite{Hyland1999games}.

		The objects of $\cC_{\B}$ coincide with those of $\cC$, and its morphisms $\param{A \to X}$ are defined to be precisely the morphisms $\B \otimes A \to X$ in $\cC$, that is
		\begin{equation}
			\cC_{\B}(A,X) \coloneqq \cC(\B \otimes A, X).
		\end{equation}
		We think of the object $\B$ as playing the role of a ``parameter space'' which indexes a family of morphisms $A \to X$.
		In order to distinguish notationally between morphisms $\param{A \to X}$ in $\cC_\B$ and their representatives $\B \otimes A \to X$ in $\cC$, we use blue colored text and diagrams whenever the former representation is used, but otherwise use the same symbol to denote the two.
		The composition of morphisms in $\cC_\B$ is defined by distributing the parameter in $\B$ via the copy map $\cop_\B$:
		\begin{equation}\label{eq:param_composition}
			\tikzfig{param_composition}
		\end{equation}
		The tensor product of morphisms in $\cC_\B$ is likewise defined by supplying copies of $\B$ to the respective morphisms,
		\begin{equation}\label{eq:param_tensor}
			\tikzfig{param_tensor}
		\end{equation}
		and with the monoidal structure morphisms being precisely those of $\cC$ itself.
		The discarding operation $\discard_X$ in $\cC_\B$ just consists of discarding both $\B$ and $X$.
		Finally, the copying in $\cC_\B$ also discards the parameter,
		\begin{equation}\label{eq:param_copy}
			\tikzfig{param_copy}
		\end{equation}
		It is then straightforward to verify that $\cC_\B$ is indeed also a Markov category.

		We can alternatively think of $\cC_\B$ as the co-Kleisli category of the reader comonad\footnote{Depending on the literature, this is also known as ``writer comonad'', since its underlying functor is the same as the writer monad in case $\B$ is a monoid object, as well as ``product comonad''.} $\B \otimes \ph$ on $\cC$ (see for example~\cite[Section~5.3]{perrone2019notes}).
		Note that, while the reader comonad is usually defined on cartesian monoidal categories, the only property of cartesian monoidal categories that is actually used in the definition is that the object $\B$ has a comonoid structure, and thus this co-Kleisli category still makes sense in our context.
		
		\begin{lemma}
			If $\cC$ has conditionals, then so does every parametric Markov category $\cC_\B$.
		\end{lemma}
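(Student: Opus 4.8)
The plan is to manufacture a conditional in $\cC_\B$ directly from a conditional in $\cC$, by folding the parameter object $\B$ into the domain. Recall that a morphism $\param{f} \colon \param{A \to X \tensor Y}$ in $\cC_\B$ is literally a morphism $f \colon \B \tensor A \to X \tensor Y$ in $\cC$. First I would invoke the hypothesis that $\cC$ has conditionals to obtain a conditional $f_{|X} \colon X \tensor (\B \tensor A) \to Y$ of $f$ with respect to $X$ in $\cC$. Since $X \tensor (\B \tensor A)$ is canonically isomorphic to $\B \tensor (X \tensor A)$ via the symmetric monoidal structure, this $f_{|X}$ may be reinterpreted, through the identity $\cC_\B(X \tensor A, Y) = \cC(\B \tensor X \tensor A, Y)$, as a morphism $\param{f_{|X}} \colon \param{X \tensor A \to Y}$ in $\cC_\B$. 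The claim to verify is then that $\param{f_{|X}}$ satisfies the conditional equation \eqref{eq:conditional} for $\param{f}$ in $\cC_\B$.

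To check this I would unfold both sides of \eqref{eq:conditional} (for $\param{f}$ and $\param{f_{|X}}$) into a string-diagram equation in $\cC$, using the formulas \eqref{eq:param_composition}, \eqref{eq:param_tensor} and \eqref{eq:param_copy} that define composition, tensoring and copying in $\cC_\B$. The structural observation driving the argument is that every \emph{structural} morphism of $\cC_\B$ --- its identities, copy maps, discard maps, and the associators and symmetries it inherits from $\cC$ --- discards the parameter wire; in particular the marginal of $\param{f}$ onto $X$ unfolds to $(\id_X \tensor \discard_Y) \circ f$, i.e.\ the marginal of $f$ in $\cC$. Consequently, the only morphisms in the unfolded diagram that genuinely consume a copy of $\B$ are $f$ (once, through the marginal) and $f_{|X}$. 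By the counit and coassociativity laws for the comonoid $\B$, all of the auxiliary copies of $\B$ created by the various instances of \eqref{eq:param_composition} and \eqref{eq:param_tensor} collapse, leaving exactly two copies of $\B$ --- one feeding $f$ and one feeding $f_{|X}$. At that point the unfolded equation is precisely the conditional equation for $f$ with respect to $X$ in $\cC$, with $\B \tensor A$ viewed as the single domain object, which holds by the choice of $f_{|X}$.

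I expect the only real work to be this last bookkeeping step: the unfolded right-hand side in $\cC$ contains one spurious copy of the parameter for each $\cC_\B$-composition and each $\cC_\B$-tensor appearing in \eqref{eq:conditional}, and one must check patiently --- using only coassociativity and the counit law --- that these reorganize into the single two-fold copy of $\B \tensor A$ that appears when one writes out the conditional equation for $f$ in $\cC$. Nothing beyond the comonoid structure of $\B$ (which every object of a Markov category carries) is needed. It may streamline the write-up to record once, as a preliminary observation, that postcomposing (respectively tensoring) in $\cC_\B$ with a parameter-discarding morphism corresponds to postcomposing (respectively tensoring) the representatives in $\cC$ with the underlying $\cC$-morphism, and then to apply this repeatedly when collapsing the diagram.
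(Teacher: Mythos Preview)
Your proposal is correct and follows exactly the same approach as the paper's proof: take a conditional $f_{|X}$ of the representing morphism $f \colon \B \otimes A \to X \otimes Y$ in $\cC$, permute the inputs to $\B \otimes (X \otimes A)$, and declare this to represent the desired conditional in $\cC_\B$. The paper's proof is a single sentence asserting this without spelling out the verification, whereas you sketch why the conditional equation in $\cC_\B$ unfolds to the conditional equation in $\cC$; your more detailed account is fine and there is nothing to correct.
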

		\begin{proof}
			If $\param{f \colon A \to X \otimes Y}$ is a morphism in $\cC_\B$ represented by $f \colon \B \otimes A \to X \otimes Y$ in $\cC$, then every conditional $f_{|X} \colon X \otimes \B \otimes A \to Y$ of $f$ with respect to $X$ represents a conditional $\param{f_{|X}}$ of $\param{f}$ in $\cC_\B$ upon permuting its input factors to $\B \otimes (X \otimes A)$.
		\end{proof}
			
\section{Representable Markov Categories}\label{sec:Rep_Markov}

\subsection{Kleisli Categories as Markov Categories}\label{sec:Kleisli_is_Markov}

	It was argued by Kock~\cite{kock2012distributions} that affine commutative monads provide a convenient categorical framework for theories of distributions.
	The following result, which is a special case of~\cite[Proposition~3.1]{fritz2019synthetic} gives one direction of the connection between this monadic approach and Markov categories.
	
	Recall first that a monad $P$ on a category with a terminal object $I$ is called \emph{affine} if $P(I) \cong I$ holds.
	Since commutative monads and symmetric monoidal monads are equivalent concepts~\cite[Proposition~6.3.5]{brandenburg}, the following result can be viewed as taking a variant of Kock's framework as its starting point.
	
	Note that term ``commutative monad'' is more commonly used than the equivalent notion of a ``symmetric monoidal monad'', especially in the computer science literature.
	However, we prefer working with the latter because its monoidal structure maps given in \eqref{eq:lax_monoidal_structure} have a clear probabilistic interpretation.
	Intuitively, if $\mu \in PX$ and $\nu \in PY$ are probability distributions, then $\nabla(\mu, \nu) \in P(X \times Y)$ can be thought of as the corresponding product distribution (see \cref{eq:product_dist}). 
	
	\begin{proposition}
		\label{kleisli_to_markov}
		Let $\cD$ be a cartesian monoidal category, and let $(P,E,\delta)$ be an affine symmetric monoidal monad on $\cD$ with unit $\delta$, multiplication $E$, and monoidal structure maps
		\begin{equation}
			\label{eq:lax_monoidal_structure}
			\nabla \colon P(\ph) \times P(\ph) \to P(\ph \times \ph).		
		\end{equation}
		Then the Kleisli category $\Kl(P)$ is a Markov category with respect to the following pieces of structure:
		\begin{itemize}
			\item The monoidal structure on objects is given by products in $\cD$, and the monoidal product of Kleisli morphisms $f \colon A \to PX$ and $g \colon B \to PY$ represented by the composite
				\[
					\begin{tikzcd}
						A \times B \ar{r}{f \times g}	& PX \times PY \ar{r}{\nabla}	& P(X \times Y),
					\end{tikzcd}
				\]
			\item The copy maps $\cop_X$ are represented by the overall composite of the diagram
				\begin{equation}
					\label{kleisli_copy}
					\begin{tikzcd}
						X \ar{r}{\delta} \ar[swap]{d}{(\id,\id)}			& PX \ar{d}{(\id,\id)}	\\
						X \times X \ar{r}{\delta \times \delta}	\ar[swap]{dr}{\delta}	& PX \times PX \ar{d}{\nabla}	\\
														& P(X \times X)
					\end{tikzcd}
				\end{equation}
		\end{itemize}
	\end{proposition}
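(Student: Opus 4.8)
The plan is to verify directly that the stated data satisfy the axioms of a Markov category (\Cref{markov_cat}), i.e.\ that $\Kl(P)$ is semicartesian symmetric monoidal, that each $\cop_X$ together with $\discard_X$ makes $X$ into a commutative comonoid, and that the multiplicativity condition~\eqref{multiplicativity} holds. The key simplifying observation, which I would state at the outset, is that $\cD$ embeds into $\Kl(P)$ via the functor sending $f \colon A \to B$ to $\delta_B \circ f \colon A \to PB$, and that this functor is strict monoidal and preserves the comonoid structure (the diagonal $(\id,\id)$ in $\cD$ going to $\cop$ as in~\eqref{kleisli_copy}). Many of the coherence equations then follow by transporting the corresponding equations in $\cD$ along this embedding, using naturality of $\delta$, $E$, and $\nabla$, together with the monad and symmetric-monoidal-monad axioms.

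First I would assemble the monoidal structure on $\Kl(P)$: objects and associators/unitors are inherited from $\cD$ (viewed in $\Kl(P)$ via $\delta$), and the tensor of Kleisli morphisms is as displayed. Bifunctoriality of $\otimes$ on morphisms is the statement that $\nabla \circ (P f \times P g) \circ \nabla = \nabla \circ P(f' \otimes g')\circ\cdots$ composes correctly; this is exactly the lax-monoidality of the multiplication $E$ and the naturality of $\nabla$, i.e.\ the standard fact that the Kleisli category of a symmetric monoidal monad is symmetric monoidal (for instance, this is where one uses that $\nabla$ is compatible with $E$). Semicartesianness — that $I = 1$ is terminal in $\Kl(P)$ — is precisely the affineness hypothesis $P(1)\cong 1$: a Kleisli morphism $A \to P(1)$ is a $\cD$-morphism $A \to P(1) \cong 1$, hence unique, and this unique morphism is $\discard_A$.

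Next I would check the comonoid axioms for $(\cop_X,\discard_X)$. Coassociativity and counitality for $\cop_X$ can be derived from the corresponding (strict) equations for the diagonal $(\id,\id)\colon X\to X\times X$ and the projection $X\to 1$ in the cartesian category $\cD$, pushed through $\delta$ and using that $\delta$ is monoidal and natural, so that the composite~\eqref{kleisli_copy} is literally the image of the $\cD$-diagonal under the embedding; cocommutativity likewise comes from cocommutativity of the $\cD$-diagonal together with the fact that $\nabla$ intertwines the symmetries of $\cD$ and of $P$. Finally, multiplicativity~\eqref{multiplicativity}, $\cop_{X\otimes Y} = (\id\otimes\mathrm{swap}\otimes\id)\circ(\cop_X\otimes\cop_Y)$, again follows because both sides are images under $\delta$ of the evident equality of diagonals in the cartesian category $\cD$, once one knows that the Kleisli tensor of two embedded morphisms is the embedding of their $\cD$-tensor — and that is exactly the reason~\eqref{kleisli_copy} is phrased so that $\cop_X$ factors as $\nabla\circ(\delta\times\delta)\circ(\id,\id) = \delta\circ(\id,\id)$ after using the coherence $\nabla\circ(\delta\times\delta)=\delta$.

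The main obstacle I anticipate is purely bookkeeping rather than conceptual: one must be careful that the ``diagonal'' description of $\cop_X$ given by~\eqref{kleisli_copy} is consistent — the two routes around the square and triangle must agree, which is the monoidal-monad coherence $\nabla \circ (\delta \times \delta) = \delta \colon X \times X \to P(X\times X)$ — and that the Kleisli tensor product of morphisms of the special form $\delta\circ h$ really does reduce to $\delta\circ(h\otimes h')$, so that comonoid equations transport cleanly. Once that coherence lemma is in hand, every remaining identity is a diagram chase using naturality of $\delta$, $E$, $\nabla$ and the symmetry, with no genuinely new input. I would therefore organize the proof around: (i) the embedding $\cD \hookrightarrow \Kl(P)$ and its strict monoidality; (ii) the coherence identity $\nabla\circ(\delta\times\delta)=\delta$ and its consequence for tensors of embedded maps; (iii) deducing each Markov-category axiom by transport from $\cD$.
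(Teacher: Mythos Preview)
Your proposal is correct and well-organized. The paper, however, does not actually prove this proposition: it simply states that the result is a special case of \cite[Proposition~3.1]{fritz2019synthetic}, and adds only the one-line remark that the upper square in~\eqref{kleisli_copy} commutes trivially while the lower triangle commutes by one of the defining coherences of a monoidal monad. So there is no ``paper's own proof'' to compare against beyond that citation.

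Your direct verification via the strict monoidal embedding $\cD \hookrightarrow \Kl(P)$, $f \mapsto \delta \circ f$, together with the coherence $\nabla \circ (\delta \times \delta) = \delta$, is exactly the right way to organize a self-contained argument, and it recovers the content behind the cited reference. The only thing I would flag is terminological: the embedding is strong monoidal (indeed with structure isomorphisms given by identities on underlying objects) but calling it ``strict'' monoidal is fine here since the monoidal structure on $\Kl(P)$ is built precisely so that the structure morphisms are inherited from $\cD$; just be sure that when you invoke bifunctoriality of the Kleisli tensor you are really using the compatibility of $\nabla$ with the monad multiplication $E$ (not just with $\delta$), as you correctly note.
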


	Note that the upper square in \cref{kleisli_copy} commutes trivially, while the lower triangle commutes as one of the defining properties of monoidal monads. 
	
\begin{example}
	This construction reproduces $\BorelStoch$ as the Kleisli category of the Giry monad on the category of standard Borel spaces and measurable maps; the definition of the copy maps reproduces exactly the maps $x \mapsto \delta_{(x,x)}$ described above.
\end{example}

\begin{example}\label{ex:D_R}
	Let $(R,+,\cdot,0,1)$ be a commutative semiring, i.e.\ a set $R$ equipped with algebraic structure like that of a commutative ring except for the assumption of additive inverses. Then $R$ induces an affine symmetric monoidal monad $D_R$ on $\cat{Set}$, given by the $R$-linear combinations monad together with a normalization constraint.
	This is spelled out, for example, in \cite[Section~5.1]{coumans2013scalars}, which we recall here.
 
	For each set $X$, denote by $D_R X$ the set of functions $p \colon X\to R$ which are nonzero on finitely many elements, and such that the normalization constraint
	\begin{equation}
		\sum_{x\in X} p(x) = 1
	\end{equation}
	holds.
	This sum is well-defined thanks to the fact that it has at most a finite number of nonzero summands,
		which is also the case for all other sums appearing in this example.
	
	For every set function $f \colon X\to Y$, we can construct the corresponding function $D_R f \colon D_R X \to D_R Y$ as follows. 
	Given $p \in D_R X$, we define $(D_R f)(p)$ to be the map 
	\begin{equation}
		y \mapsto \sum_{x\in f^{-1}(y)} p(x) .
	\end{equation}
	This makes $D_R$ into a functor. 
	The unit of the monad has components $\eta \colon X \to D_R X$ that map each $x \in X$ to $\eta(x) \colon X \to R$ defined by
	\begin{equation}
		x' \mapsto
			\begin{cases}
				1 &  \text{if }  x=x',   \\
				0 &  \text{if }  x \ne x' ,
      		\end{cases}
	\end{equation}
	generalizing the Dirac delta distribution to the commutative semiring setting.
	The monad multiplication map $\mu \colon D_R D_R X \to D_R X$ is given by
	\begin{equation}
		\mu(\phi)(x) \coloneqq \sum_{p\in D_R X} \phi(p) \cdot p(x)
	\end{equation}
	for all $\phi\in D_R D_R X$ and $x\in X$, where the product is taken in $R$.
	The monoidal unit map is uniquely determined because $D_R I \cong I$ is the terminal object. 
	Finally, the monoidal multiplication map $\nabla \colon D_R X \times D_R Y \to D_R (X\times Y)$ is given by 
	\begin{equation}\label{eq:product_dist}
		\nabla(p,q) (x,y) \coloneqq p(x) \cdot p(y)
	\end{equation}
	for all $p\in D_R(X)$, $q\in D_R(Y)$, $x\in X$ and $y\in Y$. The commutativity of $R$ is relevant for showing that this lax monoidal structure is symmetric. We leave the detailed verifications to the reader.
 
	Hence we have specified $D_R$ as an affine symmetric monoidal monad on $\cat{Set}$, which we call the \emph{(generalized) distribution monad valued in $R$}.
	By \Cref{kleisli_to_markov}, its Kleisli category is canonically a Markov category.
\end{example}

	Returning to the general theory, we consider the relation between $\cD$ and the subcategory of deterministic morphisms $\Kl(P)_\det$ in the Kleisli category.
	Clearly, the canonical identity-on-objects functor $\cD \to \Kl(P)$ lands in $\Kl(P)_\det$. 
	For particular monads $P$ it often happens that this functor is fully faithful, and hence an isomorphism of categories: 
	The original category $\cD$ is precisely the category of deterministic morphisms. 

	For example, this happens with the Giry monad on standard Borel spaces, for which the Kleisli category is $\BorelStoch$~\cite[Example~10.5]{fritz2019synthetic}.
	On the other hand, it does \emph{not} happen for $\Stoch$ as the Kleisli category of the Giry monad on all measurable spaces: 
	There are $\{0,1\}$-valued probability measures on suitable measurable spaces $(X,\Sigma_X)$ which are not delta measures~\cite[Example~10.4]{fritz2019synthetic}. 
	Some unfolding of the definitions shows that such a measure defines a deterministic morphism $I \to (X,\Sigma_X)$ in $\Stoch$ which does not correspond a measurable map $I \to (X,\Sigma_X)$, since the latter correspond exactly to the delta measures on $X$.

	We now present a general criterion which guarantees that there are no such ``accidental'' deterministic morphisms.
	Intuitively, it states that the delta distributions should be precisely those distributions which are independent of themselves, or equivalently, that they should be the only product measures supported on the diagonal.
	\begin{proposition}
		\label{det_is_nice}
		Let $\cD$ be a cartesian monoidal category.
		Let $(P,E,\delta)$ be an affine symmetric monoidal monad on $\cD$. 
		Then the canonical functor $\cD \to \Kl(P)_\det$ is an isomorphism of categories if and only if the diagram 
		\begin{equation}
			\label{delta_pullback}
			\begin{tikzcd}
				X \ar{r}{\delta} \ar[swap]{d}{(\delta,\delta)} \ar[dr,phantom," "{pullback},very near start]	& PX \ar{d}{P(\id,\id)}	\\
				PX \times PX \ar{r}{\nabla}									& P(X \times X)
			\end{tikzcd}
		\end{equation}
		is a pullback for every $X \in \cD$.
	\end{proposition}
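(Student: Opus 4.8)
The plan is to describe the deterministic morphisms of $\Kl(P)$ explicitly and then to recognise cones over the square \eqref{delta_pullback} as precisely such morphisms. Throughout, write $d_X \colon X \to X \times X$ for the diagonal in $\cD$ and $J \colon \cD \to \Kl(P)$ for the canonical identity-on-objects functor, so that $J(g) = \delta_Y \circ g$ for $g \colon X \to Y$; recall that $J$ lands in $\Kl(P)_{\det}$.

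The first, and most computational, step is to verify that a Kleisli morphism $f \colon X \to PY$ is deterministic if and only if
\begin{equation*}
	P(d_Y) \circ f \;=\; \nabla_{Y,Y} \circ (f \times f) \circ d_X .
\end{equation*}
Unfolding $\cop_Y \circ f$ and $(f \tensor f) \circ \cop_X$ in $\Kl(P)$ using the structure maps of \Cref{kleisli_to_markov} and \eqref{kleisli_copy}, and repeatedly applying naturality of the unit $\delta$ and the multiplication $E$ together with the monad laws $E_X \circ \delta_{PX} = \id_{PX}$ and $E_X \circ P(\delta_X) = \id_{PX}$, the left-hand side of the determinism equation collapses to $P(d_Y) \circ f$ and the right-hand side to $\nabla_{Y,Y} \circ (f \times f) \circ d_X$. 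This is routine but is where care is needed; everything afterwards is formal.

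The second step uses the standard consequence of affineness that $P(\pi_i) \circ \nabla_{X,X} = \pi_i$ for the two projections $\pi_i \colon X \times X \to X$ (quotable, e.g.\ from~\cite{fritz2019synthetic}). This degenerates the cone data over \eqref{delta_pullback}: for a cone $\bigl(a \colon Z \to PX,\ b \colon Z \to PX \times PX\bigr)$ with $P(d_X) \circ a = \nabla_{X,X} \circ b$, composing with $P(\pi_1)$ and $P(\pi_2)$ and using $P(\pi_i) \circ P(d_X) = \id$ forces $\pi_1 \circ b = \pi_2 \circ b = a$, hence $b = (a,a)$, so the cone equation becomes $P(d_X) \circ a = \nabla_{X,X} \circ (a \times a) \circ d_Z$ --- by the first step, exactly the assertion that $a$ is a deterministic morphism $Z \to X$ in $\Kl(P)$. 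Conversely a deterministic $a$ gives the cone $\bigl(a, (a,a)\bigr)$. Thus cones over \eqref{delta_pullback} with apex $Z$ correspond bijectively, and naturally in $Z$, to deterministic Kleisli morphisms $Z \to X$, with the distinguished cone $\bigl(\delta_X, (\delta_X,\delta_X)\bigr)$ corresponding to $\delta_X = J(\id_X)$.

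Finally, one reads off the equivalence. Since the second leg $(\delta_X,\delta_X) \circ u = \bigl(\delta_X \circ u,\, \delta_X \circ u\bigr)$ is determined by the first leg $\delta_X \circ u$, the universal property of \eqref{delta_pullback} for a fixed $X$ says exactly that every deterministic Kleisli morphism $Z \to X$ equals $\delta_X \circ u = J(u)$ for a unique $u \in \cD(Z,X)$; i.e.\ that $J \colon \cD(Z,X) \to \Kl(P)_{\det}(Z,X)$ is bijective for all $Z$. Ranging over $X$ and $Z$, and using that $J$ is the identity on objects, this is precisely the statement that $J$ is an isomorphism onto $\Kl(P)_{\det}$; unwound, it also gives the two implications separately (a pullback makes $\delta_X$ monic, hence $J$ faithful, and full onto $\Kl(P)_{\det}$; conversely full faithfulness reassembles the universal property). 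I expect the only genuine obstacle to be executing the first step without error; the identity $P(\pi_i) \circ \nabla = \pi_i$ is the one structural input that makes the cone reduction work.
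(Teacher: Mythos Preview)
Your proposal is correct and follows essentially the same approach as the paper: both arguments hinge on unfolding the determinism condition for a Kleisli morphism $f \colon Z \to PX$ into the equation $P(d_X) \circ f = \nabla \circ (f,f) \circ d_Z$, and both use the marginalization identity (which the paper phrases as $\nabla$ having the left inverse $\Delta = (P\pi_1,P\pi_2)$). The main organizational difference is that you package the argument as a single natural bijection between cones over \eqref{delta_pullback} and deterministic Kleisli morphisms, whereas the paper treats faithfulness and fullness separately and, in the converse direction, tacitly restricts to cones of the form $(f,(f,f))$; your explicit reduction of an arbitrary cone $(a,b)$ to $b=(a,a)$ via $P(\pi_i)\circ\nabla=\pi_i$ makes that step cleaner.
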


	\begin{proof}
		The monoidal structure map $\nabla$ has a left inverse given by the canonical map
		\[
			\Delta \colon P(X \times X) \to PX \times PX
		\]
		induced from the cartesian monoidal structure of $\cD$ (note that this map corresponds to marginalization in the probability context~\cite{fritz2018bimonoidal}).
		Therefore, $\nabla$ is a monomorphism. 
		Since monomorphisms are stable under pullback, it follows that $\delta \colon X \to PX$ is a monomorphism as well.
		This implies that the canonical functor $\cD \to \Kl(P)_\det$ is faithful.
	
		To prove fullness, let $f \colon A \to PX$ be the representative of a deterministic Kleisli morphism $A \to X$ in the Markov category $\Kl(P)$. Some unfolding of the definitions shows that the determinism assumption amounts exactly to commutativity of the diagram
		\begin{equation}
			\begin{tikzcd}
				A \ar[swap]{d}{(f,f)} \ar{r}{f}  &  PX \ar{d}{P(\id,\id)}  \\
				PX \times PX \ar{r}{\nabla}  &  P(X \times X)
			\end{tikzcd}	
		\end{equation}
		But now the assumption that diagram~\eqref{delta_pullback} is a pullback lets us obtain the dashed arrow $\tilde{f}$ in
		\begin{equation}
		  \label{eq:pback_problem_kleisli}
			\begin{tikzcd}
				A \ar[bend right,swap]{ddr}{(f,f)} \ar[bend left]{drr}{f} \ar[dashed]{dr}{\tilde{f}}  &    &    \\
				  &  X \ar{d}{(\delta,\delta)} \ar{r}{\delta}  &  PX \ar{d}{P(\id,\id)}  \\
				  &  PX \times PX \ar{r}{\nabla}  &  P(X \times X)
			\end{tikzcd}
		\end{equation}
		which is exactly the factorization of $f$ needed to show that it is in the image of $\cD \to \Kl(P)_\det$.
	
		Conversely, suppose that $\cD \to \Kl(P)_{\det}$ is an isomorphism.
		Our goal is now to show the unique existence of the dashed arrow in diagram~\eqref{eq:pback_problem_kleisli}. 
		We observe that the arrow $f \colon A \to PX$ represents an arrow $A \to X$ in $\Kl(P)$. 
		The commutativity of the outer square entails that this arrow is deterministic, so that there is a unique preimage $\bar{f} \colon A \to X$ in $\cD$.
		The condition that $\bar{f}$ is sent to $f$ is precisely the condition that the upper triangle commutes---the lower left triangle then commutes automatically by construction of the arrows.
	\end{proof}

	\begin{example}
		\label{ex:dist_monad_det}
		Consider the distribution monad $D_R$ valued in a commutative semiring $R$ as in \Cref{ex:D_R}. 		
		Then depending on what $R$ is, the diagram~\eqref{delta_pullback} for $P=D_R$ may or may not be a pullback for all sets $X$. For example when $R = \R_+$, we recover the usual distribution monad involving finitely supported probability measures, and \eqref{delta_pullback} is a pullback since every $\{0,1\}$-valued and finitely supported probability measure is a Dirac delta.

		The most trivial examples when \eqref{delta_pullback} is not a pullback occur when $\delta$ does not have monomorphism components.
		For instance, if $R$ is the zero semiring, then the associated distribution monad $D_R$ on $\cat{Set}$ is the terminal monad, since every $D_R X$ is a singleton set containing the unique map $X \to R$. In this case, it is clear that \eqref{delta_pullback} is a pullback only when $X$ itself is a singleton set.
	
		For a less trivial example, namely one in which $\delta \colon X \to PX$ is in fact injective but \eqref{delta_pullback} is still not a pullback, let $P$ be the distribution monad $D_{R \oplus R}$ for any nonzero commutative semiring $R$, where the addition and multiplication in $R \oplus R$ are component-wise. 
		Consider the set $X \coloneqq \{a,b\}$ and the distribution
		\begin{equation}
			s \coloneqq (0,1) \, \delta_a + (1,0) \, \delta_b \: \in \: PX
		\end{equation}
		for $0,1 \in R$.
		Clearly $s$ is not a delta distribution, since the only two delta distributions in $PX$ are $(1,1) \delta_a$ and $(1,1) \delta_b$.
		Nevertheless, both the product distribution $s \otimes s$ and $P(\id,\id)(s)$ are equal to
		\begin{equation}
			(0,1) \, \delta_{(a,a)} + (1,0) \, \delta_{(b,b)} \: \in \: P(X \times X).		
		\end{equation}
		Therefore, thinking of $s$ as a morphism $I \to PX$ in $\cat{Set}$ and using it in place of $f$ in diagram~\eqref{eq:pback_problem_kleisli} proves that diagram~\eqref{delta_pullback} is not a pullback in this case.
		Although $s$ is not a delta measure, $s \colon I \to X$ is a deterministic morphism in $\Kl(D_{R \oplus R})$, correctly capturing the intuition that $s$ does not produce any randomness.
	\end{example}

	A semiring $R$ is \emph{entire} if $R \not\cong 0$ and $R$ has no zero divisors. 
	In contrast to \cref{ex:dist_monad_det}, we now establish entirety as a sufficient condition for the deterministic morphisms in the Kleisli category of $D_R$ to be precisely the ones in the image of the functor $\cat{Set} \to \Kl(D_R)$.
	
	\begin{proposition}
		\label{entire}
		For an entire commutative semiring $R$, the diagram \eqref{delta_pullback} with $P = D_R$ is a pullback for all $X$.
	\end{proposition}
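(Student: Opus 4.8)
The plan is to verify \eqref{delta_pullback} directly in $\cat{Set}$, exhibiting $X$ together with the cone $\bigl((\delta,\delta),\delta\bigr)$ as the fiber product of $\nabla$ and $P(\id,\id)$, using only that an entire semiring is nonzero and has no zero divisors. First I would write out the two legs of the square for $P = D_R$: by \Cref{ex:D_R}, $P(\id,\id)(p) = D_R(\id,\id)(p)$ is the pushforward of $p$ along the diagonal, so $P(\id,\id)(p)(x,y) = p(x)$ if $x = y$ and $0$ otherwise, while $\nabla(p,q)(x,y) = p(x)\,q(y)$. Consequently a triple $\bigl((p,q),\,r\bigr) \in (D_R X \times D_R X) \times D_R X$ lies in the fiber product precisely when
\[
	p(x)\,q(y) = 0 \ \text{ for all } x \neq y,
	\qquad
	p(x)\,q(x) = r(x) \ \text{ for all } x.
\]
The square commutes (one checks both composites send $x$ to the function $(u,v) \mapsto [u = v = x]$), so the content is to show that the canonical comparison map $X \to \{\text{such triples}\}$, namely $z \mapsto \bigl((\delta(z),\delta(z)),\delta(z)\bigr)$, is a bijection.

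Injectivity is immediate: since $R \not\cong 0$ we have $1 \neq 0$ in $R$, so $\delta \colon X \to D_R X$ is injective (evaluate $\delta(z) = \delta(z')$ at $z$). For surjectivity I would take a triple $\bigl((p,q),r\bigr)$ in the fiber product and use the normalization constraint: $\sum_x p(x) = 1 \neq 0$ forces $p(x_0) \neq 0$ for some $x_0$; then $p(x_0)\,q(y) = 0$ for all $y \neq x_0$ together with the absence of zero divisors gives $q(y) = 0$ for $y \neq x_0$, whence $q = \delta(x_0)$ by $\sum_y q(y) = 1$. The symmetric argument starting from a point $y_0$ with $q(y_0) \neq 0$ gives $p = \delta(y_0)$, and $q = \delta(x_0)$ with $q(y_0) \neq 0$ forces $y_0 = x_0$, so $p = q = \delta(x_0)$; finally $r(x) = p(x)\,q(x) = \delta(x_0)(x)$, so $r = \delta(x_0)$ as well. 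Thus the triple is the image of $z = x_0$, and the comparison map is onto.

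I do not expect a genuine obstacle here: once the two legs of \eqref{delta_pullback} are made explicit, the statement is just the observation that over an entire semiring a finitely supported distribution which is simultaneously a product distribution and supported on the diagonal must be a Dirac delta. The only subtlety worth flagging is that both hypotheses are used essentially — nonzeroness of $R$ to locate $x_0$ via $\sum_x p(x) = 1$, and the no-zero-divisors property to pass from $p(x_0)\,q(y) = 0$ to $q(y) = 0$ — and that neither can be dropped, as the zero semiring and the example $R \oplus R$ in \Cref{ex:dist_monad_det} demonstrate.
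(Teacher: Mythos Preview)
Your proposal is correct and follows essentially the same route as the paper: both unfold the fiber-product condition to $p(x)\,q(y) = 0$ for $x \neq y$ and $p(x)\,q(x) = r(x)$, then use normalization together with $1 \neq 0$ to locate a nonzero value, the absence of zero divisors to force the other factor to be a Dirac delta, and a symmetric argument to finish. The paper phrases the reduction slightly differently (it first notes that $\delta$ is monic and then shows any compatible triple is a delta), but the substance is identical.
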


	\begin{proof}
		Since $\delta$ has monomorphism components by $1 \neq 0$ in $R$, it is enough to prove that for every $r_1, r_2, s \in PX$ such that
		\begin{equation}\label{eq:det_condition}
			r_1 \otimes r_2 = P(\id,\id)(s)
		\end{equation}
		holds, we necessarily have $r_1 = r_2 = s = \delta_x$ for some $x \in X$. 
		Equation (\ref{eq:det_condition}) unfolds to
		\begin{equation}
			r_1(x_1) \, r_2(x_2) = 
			\begin{cases}
				s(x_1) & \text{if } x_1 = x_2 \\
				0 & \text{otherwise}
			\end{cases}
		\end{equation}
		for all $x_1, x_2 \in X$. 
		Since $\sum_{x_1} r_1(x_1)$ is equal to $1$ by normalization, there must be an $\tilde{x} \in X$ such that $r_1(\tilde{x}) \neq 0$. 
		We then necessarily have $r_2(x_2) = 0$ for all $x_2 \neq \tilde{x}$, because $R$ is entire. 
		Therefore, $r_2(\tilde{x}) = 1$ by normalization; and applying the same argument the other way around yields the analogous statement for $r_1$, so that $r_1 = r_2 = \delta_{\tilde{x}}$, from which $s = \delta_{\tilde{x}}$ follows as well.
	\end{proof}
	
\subsection{Markov Categories as Kleisli Categories}\label{sec:distribution_functors}

	Many common Markov categories are indeed Kleisli categories of affine symmetric monoidal monads, as per \Cref{kleisli_to_markov}. 
	In this subsection, we prove a partial converse to this result.
	As we will see, the resulting \emph{representable} Markov categories carry additional structure which we put to use in the rest of the paper: 
	For every object $X$, there is a \emph{distribution object} $PX$, to be interpreted as the space of probability measures on the given space $X$.

	But let us start by asking under what conditions a given Markov category $\cC$ arises from the construction of \Cref{kleisli_to_markov}.
	If the monad $P$ on $\cD$ satisfies the assumption that \eqref{delta_pullback} is a pullback, then \Cref{det_is_nice} provides us with the natural bijection
	\begin{equation}
		\label{kleisli_distribution}
		\Kl(P)_\det \bigl(A,PX\bigr) \: \cong \: \Kl(P) \bigl(A,X\bigr),
	\end{equation}
	intuitively stating that Markov kernels $A \to X$ are in bijection with ordinary maps $A \to PX$, where $PX$ is the ``object of distributions'' on the object $X$.

	In particular, $P$ uniquely extends to a right adjoint to the inclusion functor \mbox{$\Kl(P)_\det \hookrightarrow \Kl(P)$}, resulting in a functor $\Kl(P) \to \Kl(P)_\det$ which we also denote $P$ by abuse of notation.
	On a Kleisli morphism represented by $f \colon X \to PY$ in the original category $\cD$, the naturality of \cref{kleisli_distribution} in $X$ shows that this functor acts by assigning to it the corresponding morphism of free $P$-algebras, namely the composite
	\[
		\begin{tikzcd}
			PX \ar{r}{Pf}		& PPY \ar{r}{E}		& PY,
		\end{tikzcd}
	\]
	where $E$ is the monad multiplication.
	In the probability context, the units and counits of the Kleisli adjunction \eqref{kleisli_distribution} instantiate to maps intimately familiar from probability theory. 
	The unit component $A \to PA$ is of course the maps that assigns delta distributions. 
	The counit $PX \to X$ in $\Kl(P)$, which is the Kleisli morphism represented by $\id_{PX} \colon PX \to PX$, has been less commonly considered explicitly.
	It can be thought of as the Markov kernel $PX \to X$ which assigns to every probability distribution $\mu \in PX$ a random element (a ``sample'') of $X$ distributed according to $\mu$. 
	We thus call it the \emph{sampling map} and denote it by $\samp \colon PX \to X$.

	In summary, if $P$ is an affine symmetric monoidal monad satisfying the relevant pullback condition, then we obtain the natural bijection of \eqref{kleisli_distribution}. 
	From right to left, a Markov kernel $A \to X$ can be reinterpreted as a deterministic map $A \to PX$; from left to right, composing a deterministic map $A \to PX$ by sampling from its output distribution produces a Markov kernel $A \to X$. 
	By construction, we have $\samp \circ \delta = \id$, which can be interpreted to mean that sampling from a delta distribution $\delta_x$ for $x \in X$ returns $x$.
	
	Based on these considerations, it is natural to consider bijections of the same type for arbitrary Markov categories now.
	\begin{definition}\label{def:distribution_objects}
	  Let $\cC$ be a Markov category and $X \in \cC$ an object.
	  A \emph{distribution object} for $X$ is an object $PX$ equipped with a morphism ${\samp_X \colon PX \to X}$ so that the induced map
	  	\begin{equation}
			\samp_X \circ \ph  \: \colon \:  \cC_{\det}(A,PX) \to \cC(A,X)
		\end{equation}
	  is a bijection for all $A \in \cC$.
	\end{definition}
	\begin{notation}\label{not:sharp}
		As before, we call $\samp_X$ the \emph{sampling map} and often drop the subscript if no confusion is likely to arise.
		We write
		\begin{equation}
			(\ph)^{\sharp} \colon \cC(A,X) \to \cC_{\det}(A,PX)
		\end{equation}
		for the inverse of $\samp \circ \ph$.
		Using this notation, the abstract version of the delta distribution map can be identified as 
		\begin{equation}
			\delta_X \coloneqq (\id_X)^\sharp,
		\end{equation}
		i.e.\ it is the unique deterministic morphism $\delta \colon X \to PX$ satisfying
		\begin{equation}\label{eq:samp_delta}
			\samp \circ \delta = \id.
		\end{equation}
	\end{notation}
	
	In other words, $PX$ is a distribution object if it represents the hom-functor
	\[
		\cC(\ph,X) \: \colon \: \cC_\det^\op \to \cat{Set}
	\]
	in $\cC_\det$. 
	The distinguished sampling morphism then arises as one represented by $\id_{PX} \colon PX \to PX$. 
	
	Note that the term ``distribution object'' is motivated by the fact that the global elements $I \to X$ in $\cC$, which are the abstract versions of probability distributions on $X$, correspond to the global elements $I \to PX$ in $\cC_\det$.
	
	\begin{lemma}
	  \label{lem:std_adjunction}
		If every $X \in \cC$ has a distribution object $PX$, then the assignment $X \mapsto PX$ is the object part of a functor $P \colon \cC \to \cC_\det$ which is right adjoint to the inclusion $\cC_\det \hookrightarrow \cC$, and with the counit of the adjunction being the transformation whose components are the sampling maps.
	\end{lemma}
	
	\begin{proof}
		This is part of the standard theory of adjunctions.
	\end{proof}

	\begin{definition}\label{def:representable}
		A Markov category is termed \emph{representable} if every object has a distribution object.
		We call the corresponding right adjoint functor $P \colon \cC \to \cC_\det$ the \emph{distribution functor} for $\cC$.
	\end{definition}
	
	Let's now see some properties of representable Markov categories.
	First of all, for any $f \colon A \to X$ in a representable Markov category, its deterministic counterpart $f^\sharp$ from \Cref{not:sharp} is the adjunct of $f$ given by the composite
		\begin{equation}
			A \xrightarrow{\mkern11mu \delta \mkern11mu} PA \xrightarrow{\mkern7mu Pf \mkern7mu} PX.
		\end{equation}
	
	Also, the faithfulness of the left adjoint $\cC_\det \into \cC$ also implies that the unit components $\delta \colon X \to PX$ are all monomorphisms~\cite[Lemma~4.5.13]{riehl2016category}.
	
	\begin{remark}
		\label{rem:delta_nonnatural}
		An important caveat is that $\delta$ is a natural transformation between $\id \colon \cC_\det \to \cC_\det$ and $P \colon \cC_\det \to \cC_\det$, and in particular natural with respect to deterministic morphisms. 
		But $\delta$ is generally \emph{not} natural with respect to non-deterministic morphisms. 
		This is one way in which denoting the two functors $P \colon \cC \to \cC_\det$ and $P \colon \cC_\det \to \cC_\det$ by the same letter may be initially confusing.

		On the other hand, the sampling transformation $\samp$ from $P \colon \cC \to \cC$ to $\id \colon \cC \to \cC$ is natural with respect to all morphisms in $\cC$.
		In particular, the diagram
		\begin{equation}
			\label{samp_associative}
			\begin{tikzcd}[column sep=large]
				PPX \ar{d}[swap]{\samp} \ar{r}{P \samp}	& PX \ar{d}{\samp}	\\
				PX \ar{r}{\samp}			& X
			\end{tikzcd}
		\end{equation}
		commutes for all $X \in \cC$, which amounts to the usual associativity of the monad multiplication.
	\end{remark}
	
	This situation, where $\samp$ is natural but $\delta$ is not, can be captured by the notion of a \emph{thunk--force category}, which can be interpreted as ``a category that looks like the Kleisli category of a monad'' \cite{fuhrmann-direct-models,furhmann-varieties-of-effects}.
	
	\begin{definition}[\cite{fuhrmann-direct-models}]
	 A \emph{thunk--force structure} on a category $\cat{C}$ amounts to 
	 \begin{itemize}
	  \item an endofunctor $L \colon \cat{C}\to\cat{C}$;
	  \item a family of maps $\thunk_X \colon X\to LX$ for each object $X$; and
	  \item a family of maps $\force_X \colon LX\to X$, 
	 \end{itemize}
	 such that 
	 \begin{itemize}
	  \item the maps $\force_X \colon LX\to X$ assemble to a natural transformation $L\Rightarrow\id$;
	  \item the maps $\thunk_X \colon X\to LX$ may not in general assemble to a natural transformation $\id\Rightarrow L$, but the maps $\thunk_{LX} \colon LX\to LLX$ do assemble to a natural transformation $L\Rightarrow LL$; and
	  \item the following diagrams commute.
	  \[
	   \begin{tikzcd}[column sep=1.3cm]
	    A \ar{r}{\thunk_A} \ar{d}[swap]{\thunk_A} & LA \ar{d}{L(\thunk_A)} \\
	    LA \ar{r}[swap]{\thunk_{LA}} & LLA
	   \end{tikzcd}
       \quad
       \begin{tikzcd}[column sep=1.3cm]
        A \ar{dr}[swap]{\id} \ar{r}{\thunk_A} & LA \ar{d}{\force_A} \\
        & A
       \end{tikzcd}
       \quad
       \begin{tikzcd}[column sep=1.3cm]
        LA \ar{dr}[swap]{\id} \ar{r}{\thunk_{LA}} & LLA \ar{d}{L(\force_A)} \\
        & LA
       \end{tikzcd}
	  \]
	 \end{itemize}
	 A category equipped with a thunk--force structure is called a \emph{thunk--force category} or \emph{abstract Kleisli category}.
	\end{definition}
	
	A representable Markov category is a thunk--force category, where the endofunctor $L$ is the distribution functor $P \colon \cat{C}\to\cat{C}$, and the maps $\thunk$ and $\force$ are given by $\delta$ and $\samp$ respectively. See also \cite{moss2022probability}, but keep in mind that in that paper, the name $\samp$ is used for the map $\thunk$ composed with copying. 
    Now, as we saw in \Cref{rem:delta_nonnatural}, $\delta$ may not be natural against non-deterministic morphisms. 
    In the context of thunk--force categories, this idea is captured by the notion of thunkable morphisms.
	
	\begin{definition}[\cite{fuhrmann-direct-models}]
	 A morphism $f \colon X\to Y$ in a thunk--force category $(\cat{C},L,\thunk,\force)$ is called \emph{thunkable} if and only if the following diagram commutes.
	\begin{equation}
		\begin{tikzcd}
			X \ar{r}{f} \ar{d}[swap]{\thunk_X} & Y \ar{d}{\thunk_Y} \\
			LX \ar{r}[swap]{Lf} & LY
		\end{tikzcd}
	\end{equation}
	\end{definition}
	
	It turns out that, for representable Markov categories, this class of morphisms coincides with that of deterministic morphisms.
	
	\begin{proposition}\label{prop:thunk_is_det}
		A morphism $f \colon X \to Y$ in a representable Markov category is deterministic if and only if it is thunkable, i.e.\ if and only if we have
		\begin{equation}\label{eq:natural_delta}
			\delta_Y \circ f = Pf \circ \delta_X.
		\end{equation}
	\end{proposition}
	
    See also \cite[Theorem~3.14]{moss2022probability} for a more general context. 
	
	\begin{proof}
	 The ``only if'' direction was already noted in \Cref{rem:delta_nonnatural}.
	 For the ``if'' part, we now prove that the top face of the following cube commutes.
	\begin{equation}
		\begin{tikzcd}[column sep=small]
			 X && Y \\
			 & {X\otimes X} && {Y\otimes Y} \\
			 PX && PY \\
			 & {PX\otimes PX} && {PY\otimes PY}
			 \arrow["f", from=1-1, to=1-3]
			 \arrow["\delta"{pos=0.8}, from=1-3, to=3-3]
			 \arrow["\delta"', from=1-1, to=3-1]
			 \arrow["Pf"'{pos=0.7}, from=3-1, to=3-3]
			 \arrow["\cop", from=3-3, to=4-4]
			 \arrow["{Pf\otimes Pf}"', from=4-2, to=4-4]
			 \arrow["\cop"', from=3-1, to=4-2]
			 \arrow["\cop", from=1-3, to=2-4]
			 \arrow["\delta\otimes\delta", from=2-4, to=4-4]
			 \arrow["\cop"', from=1-1, to=2-2]
			 \arrow["\delta\otimes\delta"{pos=0.2}, from=2-2, to=4-2, crossing over]
			 \arrow["{f\otimes f}"{pos=0.2}, from=2-2, to=2-4, crossing over]
    		\end{tikzcd}
	\end{equation}
	 Now,
	 \begin{itemize}
	  \item The front and back faces commute by the assumed naturality \cref{eq:natural_delta};
	  \item The two side faces commute since $\delta$ is deterministic;
	  \item The bottom face commutes since $Pf$ is deterministic.
	 \end{itemize}
	 Therefore, the top face commutes after postcomposing with the front right leg $\delta \otimes \delta$.
	 By \cref{eq:samp_delta}, i.e.\ $\samp\circ\delta = \id$, we conclude that the top face of the cube also commutes as such.
	\end{proof}

	Now, if $\cC = \Kl(P)$ is a Markov category arising from the construction of \Cref{kleisli_to_markov} and the monad $P$ satisfies the pullback condition of \Cref{delta_pullback}, then $\cC$ is representable.

	Somewhat conversely, if $\cC$ is a representable Markov category, then the defining adjunction induces a monad on $\cC_\det$. 
	We denote its underlying functor also by $P \colon \cC_\det \to \cC_\det$, since it differs from $P \colon \cC \to \cC_\det$ from \Cref{lem:std_adjunction} merely by restriction to the subcategory $\cC_\det$. 
	This monad has unit $\delta$ and multiplication $P \samp$.
	Indeed, in the probability context, sampling from the ``inner'' distribution of a distribution of distributions returns the expected distribution, which is consistent with the idea that $P\samp$ is the multiplication of a probability monad. 
	In fact, we can also compose $P \colon \cC \to \cC_\det$ with the inclusion functor on the other side, considering $P$ as a functor $\cC \to \cC$ instead. 
	Hence, $P$ comes in three versions which we do not distinguish notationally; we leave it understood that $P$ can act on any morphism of $\cC$ and always returns a deterministic morphism.

	For every representable Markov category $\cC$ with distribution functor $P$, there is a canonical isomorphism $\cC \cong \Kl(P)$.
	This is an instance of the elementary fact that if any identity-on-objects functor $\cD_1 \to \cD_2$ has a right adjoint, then this makes $\cD_2$ canonically isomorphic to the Kleisli category of the induced monad on $\cD_1$.\footnote{We thank Sam Staton for pointing this fact out to us.}
	
	However, the Markov category structure on $\cC$ equips this monad with additional structure and properties. 
	Next, we show that $P$ is an affine symmetric monoidal monad in a canonical way and that it automatically satisfies the pullback condition of \Cref{det_is_nice}. 
	As a consequence, if the right adjoint of $\cC_\det \into \cC$ exists, then the canonical isomorphism of categories $\cC \cong \Kl(P)$ is in fact an isomorphism of Markov categories.

	\begin{proposition}
		\label{prop:monoidal_adjunction}
		Let $\cC$ be a representable Markov category. 
		Then the right adjoint $P \colon \cat{C} \to \cat{C}_\det$ has a canonical symmetric lax monoidal structure which makes the adjunction between $P$ and the inclusion functor $\iota \colon \cat{C}_\det \hookrightarrow \cat{C}$ into a symmetric monoidal adjunction. 
	\end{proposition}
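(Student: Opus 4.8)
The plan is to recognize this as an instance of the general principle of \emph{doctrinal adjunction}: a strong monoidal left adjoint induces a canonical lax monoidal structure on its right adjoint, turning the adjunction into a monoidal adjunction, and everything is compatible with symmetries. Concretely, the left adjoint here is the inclusion $\iota \colon \cC_\det \into \cC$, and the first step is to record that $\iota$ is in fact a \emph{strict} symmetric monoidal functor. Indeed, by \cite[Remark~10.13]{fritz2019synthetic} the symmetric monoidal structure on $\cC_\det$ is simply the one inherited from $\cC$: the tensor of objects is the same, the tensor of two deterministic morphisms is again deterministic, and the associators, unitors and symmetry of $\cC$ already lie in $\cC_\det$. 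Hence $\iota$ preserves all of this on the nose and is in particular (op)lax symmetric monoidal with identity structure constraints.

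The second step is to transport the (trivial) oplax structure of $\iota$ across the adjunction $\iota \ladj P$ by the mate calculus, which forces the lax structure maps of $P$ to be
\[
	\nabla_{X,Y} \;\coloneqq\; (\samp_X \tensor \samp_Y)^{\sharp} \;=\; P(\samp_X \tensor \samp_Y) \circ \delta \;\colon\; PX \tensor PY \longrightarrow P(X \tensor Y),
\]
that is, the unique deterministic morphism classifying $\samp_X \tensor \samp_Y \colon PX \tensor PY \to X \tensor Y$ under the representing bijection $\cC_\det(PX \tensor PY,\, P(X \tensor Y)) \cong \cC(PX \tensor PY,\, X \tensor Y)$. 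The monoidal unit constraint $I \to PI$ is the canonical isomorphism $PI \cong I$, which exists because $I$ is terminal in $\cC$ by semicartesianness, so $\cC_\det(A, PI) \cong \cC(A, I)$ is a singleton for every $A$ and hence $PI$ is terminal in the cartesian category $\cC_\det$; this simultaneously records affineness of the induced monad.

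The third step is to verify the lax monoidal coherence axioms for $(\nabla_{X,Y},\, PI \cong I)$, their compatibility with the symmetries, and the statement that the unit $\delta$ and the counit $\samp$ are monoidal natural transformations. Each of these follows formally from naturality of the representing bijection $(\ph)^{\sharp}$ together with strictness of $\iota$; this is exactly the content of doctrinal adjunction, which I would invoke rather than reprove, and the symmetric refinement is automatic because every structure morphism in sight is assembled from the symmetry of $\cC$, which lives in $\cC_\det$.

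I do not expect a genuine obstacle: the only thing requiring care is the bookkeeping of which tensor is meant at each stage—the one of $\cC$, or its restriction to $\cC_\det$ where it additionally happens to be a categorical product—but strictness of $\iota$ makes these coincide, so no clash arises. The real substance of the argument is the identification of the present situation with doctrinal adjunction and the explicit read-off of the structure maps displayed above.
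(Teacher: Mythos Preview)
Your proposal is correct and follows essentially the same approach as the paper: both recognize the result as an instance of doctrinal adjunction for the strict symmetric monoidal inclusion $\iota$, identify the induced lax structure on $P$ as $\nabla_{X,Y} = P(\samp_X \otimes \samp_Y)\circ\delta$ with unit the isomorphism $PI\cong I$, and observe that the remaining coherence conditions (and monoidality of $\delta$ and $\samp$) come for free. The only difference is that the paper spells out a few of the verifications explicitly rather than deferring them entirely to the doctrinal adjunction machinery.
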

	The proof is best understood as an instance of the general theory of \emph{doctrinal adjunctions} \cite{doctrinaladjunction}.
	\begin{proof}
		Since both composites and monoidal products of deterministic morphisms are again deterministic, and also all monoidal structure isomorphisms are deterministic, we can equip $\cC_\det$ with the monoidal structure induced from $\cC$, and this makes the inclusion functor $\iota \colon \cC_\det\hookrightarrow\cC$ into a strict symmetric monoidal functor by definition.

		By the general theory of doctrinal adjunctions,\footnote{While the paper~\cite{doctrinaladjunction} is not open access, the result we are using appears as Proposition~2.1 on the nLab page \href{https://ncatlab.org/nlab/show/monoidal+adjunction}{ncatlab.org/nlab/show/monoidal+adjunction}.}
		a right adjoint to a strong monoidal functor is canonically lax monoidal, and the structure maps are given as follows:
		\begin{itemize}
			\item For all objects $X$ and $Y$ of $\cat{C}$, the multiplication map $\nabla$ 
			of the functor $P  \colon \cC \to \cC_{\det}$ is given by 
				\begin{equation*}\begin{tikzcd}
						\nabla \: \colon \: PX \otimes PY \ar{r}{\delta}	& P(PX\otimes PY) \ar{rrr}{P(\samp\otimes\samp)}	&&& P(X\otimes Y),
				\end{tikzcd}\end{equation*}
			     which is deterministic due to being a composite of deterministic morphisms. 
			     Naturality of $\nabla$ means that the following diagram ought to commute for all (not necessarily deterministic) morphisms $f \colon X\to Y$ and $g \colon A\to B$:
			     \begin{equation*}
					\begin{tikzcd}[sep=large]
						PX \otimes PA \ar{d}{Pf\otimes Pg} \ar{r}{\delta} & P(PX\otimes PA) \ar{d}{P(Pf\otimes Pg)} \ar{rr}{P(\samp\otimes\samp)} && P(X\otimes A) \ar{d}{P(f\otimes g)} \\
						PY \otimes PB \ar{r}{\delta} & P(PY\otimes PB) \ar{rr}{P(\samp\otimes\samp)} && P(Y\otimes B)
					\end{tikzcd}
				\end{equation*}
				The left square commutes by naturality of $\delta$ with respect to the deterministic morphism $Pf \otimes Pg$ and the right one by naturality of $\samp$, so that $\nabla$ is indeed natural in both arguments.
				This can be interpreted as the fact that processing two independent random variables independently preserves their independence.

				A straightforward but tedious diagrammatic argument, involving the given properties of $\delta$ and $\samp$ including $\samp \circ \delta = \id$, then shows that the relevant associativity condition for $\nabla$ to be a lax monoidal structure holds as well.
				Compatibility with the braiding $X \otimes Y \to Y \otimes X$ is obvious.

			 \item The natural isomorphism 
				\begin{equation*}		
					\cat{C}_\det \bigl( X, PI \bigr) \cong \cat{C}(X, I)
				\end{equation*}
				shows that $PI \cong I$ by the assumed terminality of $I$. 
				The unit $I \to PI$ is thus the unique morphism of this type, and it automatically satisfies the relevant compatibility conditions with the multiplication.
		\end{itemize} 
		
		Hence, the right adjoint $P \colon \cC \to \cC_\det$ is a symmetric lax monoidal functor.
		It remains to be shown that $\delta$ and $\samp$, as unit and counit of the adjunction, are monoidal transformations.

		The fact that $\delta$ is a monoidal natural transformation means that the following diagram
		\begin{equation*}
			\begin{tikzcd}
				X\otimes Y \ar{r}{\delta\otimes\delta} \ar{dr}[swap]{\delta} & PX \otimes PY \ar{d}{\nabla} \\
				& P(X\otimes Y)
			\end{tikzcd}
		\end{equation*}
		commutes.
		This can be interpreted as the fact that products of Dirac deltas are again Dirac deltas. 
		A formal proof follows via a standard naturality argument together with $\samp \circ \delta = \id$.
		
		Dually, the fact that $\samp$ is a monoidal natural transformation means that the diagram
		\begin{equation*}
			\begin{tikzcd}
				PX \otimes PY \ar{r}{\nabla} \ar{dr}[swap]{\samp \otimes \samp} & P(X\otimes Y) \ar{d}{\samp} \\
				& X\otimes Y
			\end{tikzcd}
		\end{equation*}
		commutes.
		This can be interpreted as the fact that sampling from a product distribution is the same as sampling from the two marginals independently, and again follows formally by similar arguments.
	\end{proof}
	
	\begin{remark}\label{strength}
	 The \emph{strength} of the monoidal monad $P$ is given by the deterministic maps $\sigma_{X,Y}: X\otimes PY \to P(X\otimes Y)$, natural in $\cC_{\det}$, given by the composition\footnote{One can equivalently start from a commutative strength and construct the monoidal structure in terms of it, see \cite[Section~6.3]{brandenburg}.}
	 \[
	 \begin{tikzcd}
		 X\otimes PY \ar{r}{\delta\otimes \id} & PX\otimes PY \ar{r}{\nabla} & P(X\otimes Y).
	 \end{tikzcd}
	 \]
	 The strength satisfies the following commutative diagram,
	 \begin{equation}\label{eq:strength_nat}
		\begin{tikzcd}
			X \otimes PY \ar{r}{\sigma} \ar{dr}[swap]{\id \otimes \samp} & P(X\otimes Y) \ar{d}{\samp} \\
			& X\otimes Y
		\end{tikzcd}
	\end{equation}
	 which has a similar, but ``one-sided'', interpretation to the analogous condition for $\nabla$. 
	 Note that, since the unit $\delta$ of the adjunction is not natural on the whole of $\cC$ (\Cref{rem:delta_nonnatural}), the strength $\sigma \colon X\otimes PY\to P(X\otimes Y)$ is natural with respect to general morphisms only in the second argument, and natural with respect to deterministic morphisms in the first argument.
	\end{remark}

	\begin{corollary}
		\label{cor:markov_to_kleisli1}
		Let $\cC$ be a representable Markov category. 
		Then the monad $(P, P \samp, \delta)$ on $\cC_{\det}$ arising from the underlying adjunction is symmetric monoidal and affine, thus inducing an isomorphism of Markov categories $\cC \cong \Kl(P)$.
	\end{corollary}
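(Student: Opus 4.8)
The plan is to derive the statement from \Cref{prop:monoidal_adjunction} together with two standard facts of category theory: that the monad induced by a symmetric monoidal adjunction is itself a symmetric monoidal monad, and that an identity-on-objects functor admitting a right adjoint exhibits its codomain as the Kleisli category of the induced monad. The only Markov-specific input will then be a short compatibility check ensuring that the resulting isomorphism of categories respects the copy maps.

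First I would record that $(P, P\samp, \delta)$ is a symmetric monoidal monad. By \Cref{prop:monoidal_adjunction}, the inclusion $\iota\colon \cC_{\det}\hookrightarrow\cC$ is strict symmetric monoidal and $\iota\dashv P$ is a symmetric monoidal adjunction; the general principle just mentioned then produces the monoidal monad, with unit and multiplication the monoidal transformations $\delta$ and $P\samp$ already exhibited in \Cref{prop:monoidal_adjunction}. Affineness is immediate from the bijection $\cC_{\det}(A, PI)\cong\cC(A,I)$, which forces $PI$ to be terminal, i.e.\ $PI\cong I$. Hence \Cref{kleisli_to_markov} applies and $\Kl(P)$ is a Markov category. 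I would also verify that $P$ satisfies the pullback condition of \Cref{det_is_nice}: transporting diagram~\eqref{delta_pullback} along the representing bijection $\cC_{\det}(\ph, P(\ph))\cong\cC(\ph,\ph)$, and using $\nabla = P(\samp\otimes\samp)\circ\delta$ with $\samp\circ\delta = \id$, the pullback assertion becomes the claim that $\cop_X\circ a = (b_1\otimes b_2)\circ\cop_A$ for $a,b_1,b_2\colon A\to X$ in $\cC$ forces $a = b_1 = b_2$ with this common morphism deterministic. This follows by marginalizing each output (the comonoid laws give $a = b_1 = b_2$) and then recognizing the leftover equation as the determinism equation of \Cref{def:deterministic}. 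So, by \Cref{det_is_nice}, the canonical identity-on-objects functor restricts to an isomorphism $\cC_{\det}\cong\Kl(P)_{\det}$.

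It remains to upgrade the isomorphism of underlying categories to one of Markov categories. The elementary fact recalled before \Cref{cor:markov_to_kleisli1} gives an isomorphism of categories $K\colon\Kl(P)\to\cC$ which is the identity on objects and sends a Kleisli morphism $f\colon A\to PX$ to $\samp_X\circ f$. That $K$ is strong symmetric monoidal follows by direct computation: on the monoidal product $\nabla\circ(f\times g)$ of Kleisli morphisms it yields $\samp_{X\otimes Y}\circ\nabla\circ(f\times g) = (\samp_X\otimes\samp_Y)\circ(f\times g) = (\samp_X\circ f)\otimes(\samp_Y\circ g) = K(f)\otimes K(g)$, where the first equality uses that $\samp$ is a monoidal natural transformation (\Cref{prop:monoidal_adjunction}); the monoidal unit is $I$ in both categories, and $K$ carries the coherence isomorphisms of $\Kl(P)$, which are those of $\cC_{\det}$ viewed as Kleisli morphisms, to those of $\cC$. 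Finally, the copy maps correspond: $K$ restricts, via the isomorphism $\cC_{\det}\cong\Kl(P)_{\det}$ of the previous paragraph, to a monoidal isomorphism between the cartesian monoidal categories $\Kl(P)_{\det}$ and $\cC_{\det}$, and the comultiplication of an object in any Markov category is forced to be the diagonal of that object in its cartesian category of deterministic morphisms (a counital cocommutative comonoid structure in a cartesian monoidal category is unique). Hence $K$ sends $\cop_X$ in $\Kl(P)$ to $\cop_X$ in $\cC$, and is an isomorphism of Markov categories.

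I do not anticipate a serious obstacle, as \Cref{prop:monoidal_adjunction} has already done the conceptual work; the only friction is bookkeeping. One must keep apart the three incarnations of $P$ --- on $\cC$, on $\cC_{\det}$, and as an endofunctor of $\cC$ --- and remember that $\delta$ is natural only with respect to deterministic morphisms (\Cref{rem:delta_nonnatural}) while $\samp$ is natural throughout, so that the diagram chases above run on the naturality of $\samp$ together with the identities $\samp\circ\delta = \id$ and $\nabla = P(\samp\otimes\samp)\circ\delta$ rather than on naturality of $\delta$.
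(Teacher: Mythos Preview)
Your proposal is correct and follows the same overall strategy as the paper: extract the symmetric monoidal monad from \Cref{prop:monoidal_adjunction}, invoke the elementary Kleisli fact for identity-on-objects left adjoints, and then check compatibility with the copy maps. Two divergences are worth noting. First, you fold in a verification of the pullback condition of \Cref{det_is_nice}; the paper does not use this for the corollary at all and instead proves it separately afterward as \Cref{pullback_holds} (your argument for it, transporting diagram~\eqref{delta_pullback} along the representing bijection and reducing to the determinism equation, is in fact slicker than the paper's own proof of \Cref{pullback_holds}). Second, your argument for preservation of copy maps goes through $\Kl(P)_{\det}\cong\cC_{\det}$ and uniqueness of comonoids in cartesian categories, whereas the paper observes directly that the copy map in $\Kl(P)$ is by construction (diagram~\eqref{kleisli_copy}) the image of the diagonal $X\to X\times X$ in $\cC_{\det}$ under the Kleisli inclusion, and the canonical isomorphism sends this back to $\cop_X$ in $\cC$ via $\samp\circ\delta=\id$; no appeal to the pullback condition is needed. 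Your route is sound but carries extra weight for this particular corollary.
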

	\begin{proof}
		We have already noted that there is a canonical isomorphism of categories $\cC \cong \Kl(P)$.
		It is also an isomorphism of \emph{monoidal} categories because the defining adjunction $\cC_{\det}(A,PX) \cong \cC(A,X)$ is monoidal. 
		Finally, to see that the copy maps are preserved, it is enough to note that on both sides, they are given by the diagonals $Y \to Y \times Y$ in the cartesian monoidal category $\cC_\det$.
	\end{proof}

	\begin{lemma}
		\label{pullback_holds}
		Let $\cC$ be a representable Markov category with distribution functor $P$. 
		Then $P$ satisfies the pullback condition of \Cref{det_is_nice} on $\cC_\det$.
	\end{lemma}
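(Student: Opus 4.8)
The plan is to read this off from \Cref{det_is_nice}, once we feed it the isomorphism $\cC \cong \Kl(P)$ obtained in \Cref{cor:markov_to_kleisli1}. Concretely, I would take $\cD \coloneqq \cC_\det$, which is cartesian monoidal, and equip the monad $(P,\delta,P\samp)$ on it with the symmetric monoidal structure of \Cref{prop:monoidal_adjunction}; then \Cref{cor:markov_to_kleisli1} says that $\cC \cong \Kl(P)$ as Markov categories, so all the hypotheses of \Cref{det_is_nice} are in place. Since \Cref{det_is_nice} characterizes the pullback property of \eqref{delta_pullback} by the statement that the canonical identity-on-objects functor $\cC_\det \to \Kl(P)_\det$ is an isomorphism, it suffices to verify the latter, and then the ``only if'' direction of \Cref{det_is_nice} yields the claim.

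So the one thing to check is that $\cC_\det \to \Kl(P)_\det$ is an isomorphism. First, since determinism is defined purely in terms of the copy maps, any isomorphism of Markov categories restricts to an isomorphism between the respective subcategories of deterministic morphisms; hence $\cC \cong \Kl(P)$ restricts to $\cC_\det \cong \Kl(P)_\det$. Second, I would identify this restricted functor with the canonical one: under $\cC \cong \Kl(P)$ a morphism $f \colon A \to X$ of $\cC$ is sent to the Kleisli morphism represented by $f^\sharp = Pf \circ \delta_A$, and when $f$ is deterministic the naturality of $\delta$ on $\cC_\det$ (\Cref{rem:delta_nonnatural}) turns $Pf \circ \delta_A$ into $\delta_X \circ f$, which is exactly the action of the canonical functor $\cC_\det \to \Kl(P)$ attached to the monad $(P,\delta,P\samp)$. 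Hence the restricted isomorphism \emph{is} the canonical functor, so the canonical functor is an isomorphism.

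The only point that requires any care is this last identification of the abstract Kleisli reconstruction with the ``canonical functor'' of \Cref{det_is_nice}; everything else is bookkeeping on top of results already proved, so the argument is short. If one wanted to avoid invoking the Kleisli reconstruction altogether, one could instead verify \eqref{delta_pullback} directly: given a competing cone $p \colon A \to PX$, $q \colon A \to PX \otimes PX$ in $\cC_\det$ with $P(\id,\id) \circ p = \nabla \circ q$, postcomposing with the canonical retraction $\Delta = (P\pi_1, P\pi_2)$ of $\nabla$ (which is a retraction by a short computation using $P\samp \circ \delta = \id$ and naturality of $\delta$ on $\cC_\det$) forces $q = (p,p)$; then, writing $\bar p \coloneqq \samp \circ p$ and using naturality of $\nabla$ together with the monoidality of $\delta$ from \Cref{prop:monoidal_adjunction}, the cone equation rewrites as $(\cop_X \circ \bar p)^\sharp = \bigl((\bar p \otimes \bar p) \circ \cop_A\bigr)^\sharp$, so the injectivity of $(\ph)^\sharp$ shows $\bar p$ is deterministic; and finally $\bar p$ is the unique morphism $A \to X$ in $\cC_\det$ through which the cone factors, uniqueness being immediate from $\samp \circ \delta = \id$. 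Either route works, and in both cases the substantive content has already been packaged into \Cref{prop:monoidal_adjunction} and \Cref{cor:markov_to_kleisli1}.
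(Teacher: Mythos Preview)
Your proposal is correct. The paper takes your second route: it verifies the pullback directly by showing that the competing legs satisfy $f_2=(f_1,f_1)$, that $g\coloneqq\samp\circ f_1$ is deterministic via a diagram chase using naturality of $\samp$ and monoidality of $\delta$, and that this $g$ fills in the cone; uniqueness comes from $\delta$ being split mono. Your compression of that chase into the single observation that the cone equation reads $(\cop_X\circ\bar p)^\sharp=((\bar p\otimes\bar p)\circ\cop_A)^\sharp$, whence determinism of $\bar p$ by injectivity of $(\ph)^\sharp$, is a cleaner packaging of the same argument.

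Your first route is genuinely different from the paper's and is not circular: \Cref{cor:markov_to_kleisli1} is proved before \Cref{pullback_holds} and does not rely on it, so you may legitimately feed the Markov-category isomorphism $\cC\cong\Kl(P)$ into the ``only if'' direction of \Cref{det_is_nice}. The identification of the restricted isomorphism with the canonical Kleisli inclusion via $f^\sharp=Pf\circ\delta_A=\delta_X\circ f$ for deterministic $f$ is exactly the point that needs checking, and you have it. This approach buys brevity and makes the logical dependency on \Cref{det_is_nice} explicit; the paper's direct verification, by contrast, is self-contained and makes the argument independent of how the isomorphism $\cC\cong\Kl(P)$ was obtained, which is convenient when one later wants to cite \Cref{pullback_holds} on its own.
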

	\begin{proof}
		We need to show that for every $A,X \in \cC$ and any diagram in $\cC_\det$ of the form 
		\begin{equation}\label{eq:pullback}
			\begin{tikzcd}
				A \ar[bend left]{drr}{f_1} \ar[bend right,swap]{ddr}{f_2} \ar[dashed]{dr}{g}							\\
				& X \ar{r}{\delta} \ar[swap]{d}{(\delta,\delta)} \ar[dr,phantom," "{pullback},very near start]	& PX \ar{d}{P(\id,\id)}		\\
					& PX \otimes PX \ar{r}{\nabla}									& P(X \otimes X)
			\end{tikzcd}
		\end{equation}
		without the dashed arrow, there is a unique dashed arrow such that the diagram commutes. Note that the diagonal in $\cC_\det$ is given by the copy map in $\cC$, so that the two vertical morphisms in the diagram are
		\begin{align}
			(\delta,\delta) &= (\delta \otimes \delta) \circ \cop_X,    &   P(\id,\id) &= P(\cop_X).
		\end{align}
		Since $\delta$ is a monomorphism by $\samp \circ \delta = \id$, the uniqueness is automatic and it is enough to find some $g$ which makes the diagram commute. 
		
		To this end, note first that composing the whole diagram with the two marginalization maps $P(X \otimes X) \to PX$ shows that $f_2 = (f_1, f_1)$, again as a pairing with respect to the universal property of $PX \otimes PX$ as a product in $\cC_\det$.
		
		We now show that $g \coloneqq \samp \circ f_1$ does the job. 
		To this end, it is enough to prove that $g$ is deterministic, because then we have
		\begin{equation}
			\delta \circ g = Pg \circ \delta
		\end{equation}
		resulting in commutativity of the upper triangle by
		\begin{equation}
			Pg \circ \delta = P \samp \circ P f_1 \circ \delta = f_1 \circ \samp \circ \delta = f_1
		\end{equation}
		Commutativity of the lower left triangle then also follows, thanks to $f_2 = (f_1, f_1)$.

		The claim that $g$ is deterministic amounts to the commutativity of the outermost rectangle in the diagram
		\[
			\begin{tikzcd}
				A \ar{rr}{f_1} \ar{ddrr}{f_2} \ar[swap]{dd}{\cop}	&& PX \ar{rr}{\samp} \ar[swap]{dr}{P(\cop)}	&	& X \ar{dd}{\cop} 	\\
										&&							& P(X \otimes X) \ar{dr}{\samp}	\\
			A \otimes A \ar[swap]{rr}{f_1 \otimes f_1}		&& PX \otimes PX \ar{ur}{\nabla} \ar[swap]{rr}{\samp\, \otimes\, \samp}	&	& X \otimes X 
			\end{tikzcd}
		\]
		Here, the lower left triangle commutes by $f_2 = (f_1, f_1)$, the oddly shaped square by assumption, the upper right square by naturality of $\samp$ and the lower right triangle by \Cref{prop:monoidal_adjunction}.
		Therefore, choosing $g$ to be $\samp \circ f_1$ makes the diagram~\eqref{eq:pullback} commute.
	\end{proof}

	We can summarize the previous results as follows.\pagebreak[2]
	
	\begin{theorem}
		\label{markov_as_kleisli}
		For a Markov category $\cC$, the following are equivalent:
		\begin{enumerate}
			\item \label{it:rep} $\cC$ is representable.
			\item \label{it:monad} There is an affine symmetric monoidal monad $P$ on $\cC_\det$ such that:
				\begin{itemize}
					\item The diagram~\eqref{delta_pullback} is a pullback for every $X$.
					\item The identity functor on $\cC_\det$ extends to an isomorphism of Markov categories $\cC \cong \Kl(P)$.
				\end{itemize}
		\end{enumerate}
	\end{theorem}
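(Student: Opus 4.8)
The plan is to assemble this equivalence directly from the machinery of \Cref{sec:Kleisli_is_Markov,sec:distribution_functors}: the statement is a summary, so no new ideas are needed, only care in matching up the deterministic subcategories. For the implication \ref{it:rep}$\Rightarrow$\ref{it:monad}, I would start from a representable $\cC$ and take $P \colon \cC_\det \to \cC_\det$ to be the monad induced by the adjunction of \Cref{std_adjunction}, with unit $\delta$ and multiplication $P\samp$. By \Cref{cor:markov_to_kleisli1} this monad is affine and symmetric monoidal and there is an isomorphism of Markov categories $\cC \cong \Kl(P)$, while \Cref{pullback_holds} gives that $P$ satisfies the pullback hypothesis of \Cref{det_is_nice}, i.e.\ that \eqref{delta_pullback} is a pullback for every $X$. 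The one point to spell out is that this isomorphism ``extends the identity on $\cC_\det$'': the canonical identity-on-objects functor $\cC \to \Kl(P)$ sends $f \colon A \to X$ to its classifier $f^\sharp \colon A \to PX$, and for deterministic $f$ naturality of $\delta$ on $\cC_\det$ gives $f^\sharp = \delta_X \circ f$, which is exactly the image of $f$ under the canonical functor $\cC_\det \to \Kl(P)$; since $P$ satisfies the pullback condition, \Cref{det_is_nice} makes that functor an isomorphism onto $\Kl(P)_\det$, so under this identification $\cC \cong \Kl(P)$ is the identity on $\cC_\det$.

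For \ref{it:monad}$\Rightarrow$\ref{it:rep}, I would argue that $\Kl(P)$ is itself representable and transport this along the given isomorphism. Concretely, \Cref{kleisli_to_markov} makes $\Kl(P)$ a Markov category, and since $P$ satisfies the pullback condition, \Cref{det_is_nice} identifies $\Kl(P)_\det$ with $\cC_\det$, yielding the bijection $\Kl(P)_\det(A,PX) \cong \Kl(P)(A,X)$ of \eqref{kleisli_distribution}; thus for each $X$ the pair $(PX,\samp_X)$, with $\samp_X$ the Kleisli morphism represented by $\id_{PX}$, is a distribution object in the sense of \Cref{distribution_objects}. Given the isomorphism of Markov categories $\Phi \colon \cC \xrightarrow{\ \sim\ } \Kl(P)$ from \ref{it:monad} — which restricts to an isomorphism $\cC_\det \cong \Kl(P)_\det$ and carries sampling maps to sampling maps — the object $\Phi^{-1}(PX)$ together with $\Phi^{-1}(\samp_X)$ then represents $\cC(\ph,X)$ on $\cC_\det$, so every object of $\cC$ has a distribution object and $\cC$ is representable.

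I do not expect a deep obstacle, since \Cref{cor:markov_to_kleisli1}, \Cref{pullback_holds}, \Cref{det_is_nice} and \Cref{kleisli_to_markov} do all the heavy lifting. The one genuinely delicate point — and the place where invoking ``the standard theory of adjunctions'' does not suffice on its own — is the identification of deterministic subcategories in the first implication: one must check that the Markov-category isomorphism $\cC \cong \Kl(P)$ carries $\cC_\det$ onto $\Kl(P)_\det$ \emph{with the original comonoid structure} (copy and discard maps) intact, so that the Kleisli reconstruction is of the monad on $\cC_\det$ itself rather than of an isomorphic copy. This is exactly where the pullback condition of \Cref{det_is_nice} is indispensable, and it is handled by combining that proposition with the observation that on both sides the copy maps are the diagonals of the cartesian category $\cC_\det$.
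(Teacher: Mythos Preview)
Your proof is correct and the forward direction matches the paper's exactly: invoke \Cref{std_adjunction}, \Cref{cor:markov_to_kleisli1}, and \Cref{pullback_holds}. The extra care you take to verify that the isomorphism restricts to the identity on $\cC_\det$ is fine but already implicit in \Cref{cor:markov_to_kleisli1}.

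For the converse \ref{it:monad}$\Rightarrow$\ref{it:rep}, however, you take a detour that the paper avoids. You first argue that $\Kl(P)$ itself is representable---which requires \Cref{det_is_nice} and hence the pullback condition to identify $\Kl(P)_\det$ with $\cC_\det$---and then transport representability along the isomorphism $\Phi$. The paper instead works directly in $\cC$: since the isomorphism $\cC \cong \Kl(P)$ extends the identity on $\cC_\det$ by hypothesis, one simply chains the Kleisli adjunction with the isomorphism to obtain
\[
	\cC_\det(A,PX) \cong \Kl(P)(A,X) \cong \cC(A,X),
\]
which is already the representability of $\cC$. The paper then explicitly remarks that the pullback condition is \emph{not} needed for this direction. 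Your route is valid, but it obscures this asymmetry: the pullback condition is genuinely required only to go from \ref{it:rep} to \ref{it:monad}, not the other way.
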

	In particular, since representability is a property rather than extra structure, the monoidal monad $P$ in the second condition is unique (up to unique isomorphism).
	
	This result is similar, but unrelated, to \cite[Theorem~4.7]{callbyvalue}, where a correspondence is drawn between Freyd categories and Kleisli categories of \emph{strong} (not necessarily commutative) monads. 
	
	\begin{proof}
		If $\cC$ is representable, \Cref{lem:std_adjunction} gives us the desired monad $P$, and the isomorphism of Markov categories is the one of \Cref{cor:markov_to_kleisli1}. Finally,	\Cref{pullback_holds} states exactly that this monad satisfies the pullback condition. 
		
		For the converse, we only need to show that the inclusion functor $\cC_\det \hookrightarrow \cC$ has a right adjoint. This holds by the assumed isomorphism $\cC \cong \Kl(P)$ together with the Kleisli adjunction, which gives us natural bijections
		\[
			\cC_\det(A,PX) \cong \Kl(P)(A,X) \cong \cC(A,X).
		\]
		Note that the pullback condition is not needed in this argument.
	\end{proof}

	\begin{example}\label{ex:param_representable}
		If $\cC$ is a representable Markov category, then every parametric Markov category $\cC_\B$ as introduced in \Cref{sec:parametric} is representable too.
		One can use the same distribution objects and take the sampling map $\param{\samp_X}$ in $\cC_\B$ to be represented by $\discard_\B \otimes \samp_X$, resulting in the desired bijection
		\[
			\cC_{\B,\det}(A,P_\B X) = \cC_\det(\B \otimes A, PX) \cong \cC(\B \otimes A, X) = \cC_\B(A,X).
		\]
		Thus, the distribution functor $P_\B  \colon \cC_\B \to \cC_{\B,\det}$ acts the same on objects as the original \mbox{$P\colon \cC \to \cC_{\det}$} does. 
		The action on morphisms is then uniquely determined subject to making the bijection $\cC_{\B,\det}(A,P_\B X) \cong \cC_\B(A,X)$ natural. 
		Concretely, a morphism $\param{f \in \cC_\B(A,X)}$ represented by $f \colon \B \otimes A \to X$ gets mapped to the morphism $\param{P_\B f \in \cC_\B(P_\B A,P_\B X)}$ represented by the composite
		\begin{equation}\label{eq:P_param}
			\B \otimes PA \xrightarrow{\mkern10mu \sigma \mkern10mu}  P(\B \otimes A) \xrightarrow{\mkern7mu Pf \mkern7mu}  PX,
		\end{equation}
		where $\sigma \colon \B \otimes PA \to P(\B \otimes A)$ denotes the strength of the monad $P$ as introduced in \Cref{strength}. 
		
		In order to see that this is how $P_\B$ must act on morphism in $\cC_\B$, it is enough to show that this prescription indeed makes the bijection
		\[
			\cC_{\B,\det}(A,P_\B X) \cong \cC_\B(A,X)
		\]
		natural in $X$.
		That is, we need to check that for each morphism $\param{k \colon X \to Y}$ represented by $k \colon \B \otimes X\to Y$, the following diagram
		\begin{equation*}
			\begin{tikzcd}[sep=large,color=parametrized]
				\cC_{\B,\det}(A,P_\B X) \ar{d}{P_\B k \circ \phsm} \ar{r}{\samp \circ \phsm}[swap]{\cong} & \cC_{\B}(A,X) \ar{d}{k \circ \phsm} \\
				\cC_{\B,\det}(A, P_\B Y) \ar{r}{\samp \circ \phsm}[swap]{\cong} & \cC_{\B}(A,Y)
			\end{tikzcd}
		\end{equation*}
		commutes. 
		Starting with a deterministic $f \colon \B \otimes A\to PX$ in the top-left corner, commutativity of the diagram amounts to showing that the equation
		\begin{equation}
			\tikzfig{param_functor}
		\end{equation}
		holds in $\cC$, where we use \eqref{eq:P_param} in order to express $P_\B k$ in terms of $Pk$ and $\sigma$.
		This equation follows straightforwardly if we apply the naturality of $\samp$ and property~\eqref{eq:strength_nat}.
	\end{example}
		
\subsection{Almost-Surely-Compatible Representability}
\label{sec:ascr}
		
	In a representable Markov category, it is not a priori clear whether the defining equation $\cC_\det(A,PX) \cong \cC(A,X)$ respects almost sure equality, in the following sense.
	An almost sure equality of two deterministic morphisms $A \to PX$ (with respect to some morphism $p \colon \Theta \to A$) implies the corresponding almost sure equality of the resulting morphisms $A \to X$, since the latter are obtained simply by composition with the sampling morphisms $PX \to X$. 
	However, the other direction is not clear: 
	If two morphisms $A \to X$ are almost surely equal, does this means that also their deterministic counterparts $A \to PX$ must be almost surely equal?

	Indeed, in \Cref{ex:not_as_rep} we provide a representable Markov category in which this converse implication fails to hold. 
	But since such a converse is relevant to our upcoming applications of representable Markov categories, we now investigate representable Markov categories in which it does hold.

	\begin{definition}
		\label{def:adjunction_as_compatible}
		A Markov category is \emph{\as{}-compatibly representable} if it is representable and for any morphism $p \colon \Theta \to A$, the defining natural bijection
		\[
			\cC_{\det}(A,PX) \cong \cC(A,X)
		\]
		respects almost sure equality.
		That is, for all $f,g \colon A \to X$, we have
		\begin{equation}\label{eq:adjunction_as_compatible}
			f^{\sharp} \ase{p} g^{\sharp}  \qquad \iff \qquad  f \ase{p} g.
		\end{equation}
	\end{definition}

	As we already noted, the implication from left to right is automatic because of $f = \samp \, f^\sharp$.

	Many representable Markov categories are actually \as{}-compatibly representable, including $\BorelStoch$ as the following example shows.

	\begin{example}
		\label{ex:borelstoch_ascompatible}
		For any two $f,g \colon A \to X$ in $\BorelStoch$, we have $f \ase{p} g$ if and only if for all $S \subseteq \Sigma_A$ and $T \subseteq \Sigma_X$,
		\begin{equation}
			\int_S f(T|a) \, p(da|\theta) = \int_S g(T|a) \, p(da|\theta),
		\end{equation}
		or equivalently if and only if the two functions $f(T|\ph), g(T|\ph) \colon A \to \R$ are \as{$p(\ph|\theta)$} equal for every $\theta \in \Theta$ and every $T \in \Sigma_X$~\cite[Example~13.3]{fritz2019synthetic}.
		What we need to prove is that this holds uniformly in $T$, i.e.\ that the \emph{measures} $f(\ph|a)$ and $g(\ph|a)$ are likewise $p(\ph|\theta)$-almost surely equal with respect to $a \in A$. 
		Since $\Sigma_X$ is countably generated, say by a sequence of measurable sets $(T_n)_{n \in \N}$, it is enough to show that $f(T_n|a) = g(T_n|a)$ holds for all $n$ with unit probability in $a$. 
		But this is indeed the case by assumption, since a countable intersection of sets of full measure again has full measure.
		Therefore, $\BorelStoch$ is \as{}-compatibly representable. 
	\end{example}

	A property equivalent to \as{}-compatible representability, which is useful in manipulations of string diagrams, turns out to be the following.
	\begin{definition}
		\label{def:scp}
		A representable Markov category is said to satisfy the \emph{sampling cancellation property} if, for any three morphisms $f,g \colon X \otimes A \to Y$ and $h \colon A \to X$, the following implication holds:
		\begin{equation*}\label{eq:scp}
			\tikzfig{scp}
		\end{equation*}
	\end{definition}
	The name of this condition is explained by the equation $f = \samp \, f^\sharp$, so that the implication amounts to the possibility to cancel the sampling map in diagram equations of the above form.

	\begin{proposition}
		\label{prop:scp_vs_ascomp}
		A representable Markov category $\cC$ satisfies the sampling cancellation property if and only if it is \as{}-compatibly representable.
	\end{proposition}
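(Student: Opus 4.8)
The plan is to prove the two implications separately, the guiding observation being that I expect the sampling cancellation property of \Cref{def:scp} to be, up to reindexing, nothing but the nontrivial half of the defining equivalence \eqref{eq:adjunction_as_compatible} in \Cref{def:adjunction_as_compatible}: after substituting $f = \samp \circ f^\sharp$ and $g = \samp \circ g^\sharp$, its hypothesis should become an almost-sure-equality of exactly the shape occurring on the left of \eqref{eq:adjunction_as_compatible}, but stated only for conditioning morphisms of the special form $(h \otimes \id_A) \circ \cop_A$ and for morphisms with domain $X \otimes A$. The elementary facts I will rely on throughout are the identity $\samp \circ f^\sharp = f$ and the determinism of $f^\sharp$, the naturality of $\delta$ with respect to deterministic morphisms (\Cref{rem:delta_nonnatural}), the resulting equality $(f \circ k)^\sharp = f^\sharp \circ k$ for deterministic $k$, and the standard stability of $\ase{}$ under pre- and post-composition.

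For the direction ``\as{}-compatibly representable $\Rightarrow$ sampling cancellation property'' I will simply specialise \Cref{def:adjunction_as_compatible}. Given $f, g \colon X \otimes A \to Y$ and $h \colon A \to X$ as in \Cref{def:scp}, I apply the defining bijection with the roles of ``$\Theta$'', ``$A$'', ``$X$'' played by $A$, $X \otimes A$, $Y$, and with the conditioning morphism taken to be $(h \otimes \id_A) \circ \cop_A \colon A \to X \otimes A$. Under the substitution $f = \samp \circ f^\sharp$, $g = \samp \circ g^\sharp$ the hypothesis of \Cref{def:scp} is then precisely the left-hand side of \eqref{eq:adjunction_as_compatible} for this data, and its conclusion the right-hand side, so the ``$\Leftarrow$'' part of \eqref{eq:adjunction_as_compatible} gives exactly the implication.

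The converse requires a little more, since \Cref{def:adjunction_as_compatible} allows an arbitrary $p \colon \Theta \to A$ while the sampling cancellation property only supplies the special conditioning shape $(h \otimes \id) \circ \cop$; I therefore need to recover the general case from the special one. Given $p \colon \Theta \to A$ and $f,g \colon A \to X$ with $f \ase{p} g$, I apply the sampling cancellation property with ``$A$'', ``$X$'', ``$Y$'' taken to be $\Theta$, $A$, $X$, with ``$h$'' taken to be $p$, and with ``$f$'', ``$g$'' taken to be $f \circ (\id_A \otimes \discard_\Theta)$ and $g \circ (\id_A \otimes \discard_\Theta)$, i.e.\ $f$ and $g$ applied to the $A$-factor of $A \otimes \Theta$ with the $\Theta$-factor discarded. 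A routine computation with copy and discard maps shows that the hypothesis for this data is equivalent to $f \ase{p} g$, because composing the conditioning morphism $(p \otimes \id_\Theta) \circ \cop_\Theta$ with $\discard_\Theta$ collapses it to $p$; the conclusion then reads $(f \circ (\id_A \otimes \discard_\Theta))^\sharp \ase{(p \otimes \id_\Theta)\circ\cop_\Theta} (g \circ (\id_A \otimes \discard_\Theta))^\sharp$, and invoking $(f \circ k)^\sharp = f^\sharp \circ k$ for the deterministic $k = \id_A \otimes \discard_\Theta$ together with the same collapsing computation rewrites it as $f^\sharp \ase{p} g^\sharp$, which is the missing implication in \Cref{def:adjunction_as_compatible}. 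I expect the main obstacle to be exactly this last step of bookkeeping: verifying that both translations — from the sampling cancellation hypothesis to ``$f \ase{p} g$'', and from ``$(f\circ k)^\sharp \ase{} (g\circ k)^\sharp$'' back to ``$f^\sharp \ase{p} g^\sharp$'' — are honest equivalences rather than one-way implications, which hinges on the conditioning morphism restricting onto $p$ along $\discard_\Theta$ and on compatibility of $(\ph)^\sharp$ with deterministic precomposition; the rest is a direct specialisation.
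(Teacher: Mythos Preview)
Your proof is correct and matches the paper's approach essentially word for word: both directions use the same specialisations, namely taking $p = (h \otimes \id_A) \circ \cop_A$ to derive the sampling cancellation property from \as{}-compatible representability, and taking $h = p$ together with $f \circ (\id_A \otimes \discard_\Theta)$, $g \circ (\id_A \otimes \discard_\Theta)$ for the converse. The only differences are that you present the two implications in the opposite order and that you spell out the bookkeeping (the equivalence of the hypothesis with $f \ase{p} g$ and the identity $(f \circ k)^\sharp = f^\sharp \circ k$ for deterministic $k$) which the paper leaves implicit.
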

	\begin{proof}
		If in \Cref{def:scp} we use $f,g \colon X \otimes A \to Y$ of the form $f \otimes \discard_A$ and $g \otimes \discard_A$ for arbitrary $f,g \colon X \to Y$ and $h = p$, then we recover the non-trivial 
			direction of (\ref{eq:adjunction_as_compatible}).
			
		Conversely, if in \Cref{def:adjunction_as_compatible} we use $p \colon A \to X \otimes A$ given by 
		\begin{equation}\label{eq:inputcopy2}
			\tikzfig{inputcopy2}
		\end{equation}
		 for a given $h \colon A \to X$, then the right-to-left implication of (\ref{eq:adjunction_as_compatible}) implies the sampling cancellation property.
	\end{proof}

	In \Cref{ex:param_representable}, we saw that if a Markov category $\cC$ is representable, then so is every parametric Markov category $\cC_\B$. 
	As the following lemma shows, the same can be said about \as{}-compatible representability.
	
	\begin{lemma}\label{lem:parametric_ascr}
		Let $\cC$ be an \as{}-compatibly representable Markov category.
		For every $\B \in \cC$, the parametric Markov category $\cC_\B$ as introduced in \Cref{sec:parametric} is likewise \as{}-compatibly representable.
	\end{lemma}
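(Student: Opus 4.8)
The plan is to transport the defining equivalence of \Cref{def:adjunction_as_compatible} from $\cC$ to $\cC_\B$ by rephrasing everything at the level of representatives in $\cC$. Recall from \Cref{ex:param_representable} that $\cC_\B$ is already representable, with $P_\B X = PX$ and $\param{\samp_X}$ represented by $\discard_\B \otimes \samp_X$; so what remains is the non-automatic implication of \eqref{eq:adjunction_as_compatible} for $\cC_\B$. Two preliminary identifications will carry the argument.

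First I would check that the $(\ph)^\sharp$ operation of $\cC_\B$ is computed by the $(\ph)^\sharp$ operation of $\cC$ on representatives: if $\param{f} \colon A \to X$ is represented by $f \colon \B \otimes A \to X$, then $\param{f}^\sharp \colon A \to P_\B X$ is represented by $f^\sharp \colon \B \otimes A \to PX$. This comes out of unwinding the co-Kleisli composite $\param{f}^\sharp = \param{P_\B f} \circ \param{\delta_A}$: the unit $\param{\delta_A}$ is represented by $\delta_A \circ (\discard_\B \otimes \id_A)$ (forced by $\param{\samp}\circ\param{\delta} = \param{\id}$), the functor $\param{P_\B f}$ is represented by $Pf \circ \sigma_{\B,A}$ by \Cref{ex:param_representable}, and then $\sigma_{\B,A} \circ (\id_\B \otimes \delta_A) = \nabla \circ (\delta_\B \otimes \delta_A) = \delta_{\B \otimes A}$, using the description of the strength in \Cref{strength} and the monoidality of $\delta$ from \Cref{prop:monoidal_adjunction}.

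Second, and this is the crux, I would prove a translation lemma for almost sure equality: for $\param{u}, \param{v} \colon A \to X$ represented by $u, v \colon \B \otimes A \to X$ and $\param{p} \colon \Theta \to A$ represented by $p \colon \B \otimes \Theta \to A$,
\[
	\param{u} \ase{\param{p}} \param{v} \text{ in } \cC_\B \quad \Longleftrightarrow \quad u \ase{\hat p} v \text{ in } \cC,
\]
where $\hat p \colon \B \otimes \Theta \to \B \otimes A$ is $(\id_\B \otimes p) \circ (\cop_\B \otimes \id_\Theta)$, i.e.\ ``copy the parameter, keep one copy and route the other through $p$''. To see this, unwind the defining equation \eqref{eq:ase} of $\ase{\param{p}}$ in $\cC_\B$, remembering that copying in $\cC_\B$ discards the parameter (\eqref{eq:param_copy}): as a $\cC$-morphism $\B \otimes \Theta \to A \otimes X$, it becomes ``copy $\B$; apply $p$; copy the resulting $A$; apply $u$ to one of the copies together with the copy of $\B$''. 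Now I would invoke the comonoid counit law: two parallel $\cC$-morphisms $\B \otimes \Theta \to A \otimes X$ agree iff their ``$\B$-augmented'' versions $\B \otimes \Theta \to \B \otimes A \otimes X$ (obtained by additionally copying and emitting the input $\B$) agree, since discarding that extra output recovers the originals. Finally one checks that the $\B$-augmented diagram is precisely the defining equation of $u \ase{\hat p} v$ in $\cC$; this is the step where the ``extra $\B$ wire'' is reconciled.

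With these two facts the conclusion is formal. Applying the translation lemma to $(\param{f}, \param{g})$ and to $(\param{f}^\sharp, \param{g}^\sharp)$ — whose representatives are $f^\sharp, g^\sharp$ by the first identification — and applying a.s.-compatible representability of $\cC$ to the morphism $\hat p \colon \B \otimes \Theta \to \B \otimes A$, we obtain
\[
	\param{f}^\sharp \ase{\param{p}} \param{g}^\sharp \;\Longleftrightarrow\; f^\sharp \ase{\hat p} g^\sharp \;\Longleftrightarrow\; f \ase{\hat p} g \;\Longleftrightarrow\; \param{f} \ase{\param{p}} \param{g},
\]
which is exactly \eqref{eq:adjunction_as_compatible} for $\cC_\B$. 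The main obstacle is the bookkeeping in the translation lemma: faithfully rewriting the co-Kleisli composition, tensor and copy of $\cC_\B$ inside $\cC$ while tracking which copies of $\B$ get discarded, so that the resulting string-diagram identity matches $u \ase{\hat p} v$ on the nose — the comonoid counit trick bridging the surplus $\B$ wire being the one genuinely non-mechanical ingredient. One could instead route the proof through the sampling cancellation property and \Cref{prop:scp_vs_ascomp}, but the direct argument above seems cleanest.
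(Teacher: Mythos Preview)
Your proof is correct, but the paper takes precisely the alternative route you mention and dismiss at the end. After establishing (as you do) that $\param{f}^\sharp$ is represented by $f^\sharp$, the paper simply writes out the right-to-left implication of \eqref{eq:adjunction_as_compatible} in $\cC_\B$ as a single string-diagram implication in $\cC$ and observes that it is an instance of the sampling cancellation property of \Cref{def:scp} (with the parameter object $\B$ playing the role of the extra input $A$ there), invoking \Cref{prop:scp_vs_ascomp}. This is shorter: the sampling cancellation property is tailored exactly to the situation where an extra input wire is carried along, so no separate translation lemma or ``$\B$-augmentation'' trick is needed. Your direct approach, by contrast, avoids the detour through \Cref{prop:scp_vs_ascomp} and yields as a byproduct the reusable fact that $\param{\ase{p}}$ in $\cC_\B$ coincides with $\ase{\hat p}$ in $\cC$ for the morphism $\hat p = (\id_\B \otimes p)\circ(\cop_\B \otimes \id_\Theta)$, at the cost of more string-diagram bookkeeping.
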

	\begin{proof}
		First of all, notice that if $\param{f \in \cC_\B(A,X)}$ is a morphism in $\cC_\B$ represented by \mbox{$f \colon \B \otimes A \to X$} in $\cC$, then its adjunct $\param{f^\sharp \in \cC_\B(A,P_\B X)}$ is represented by \mbox{$f^\sharp \colon \B \otimes A \to PX$}, the adjunct of $f$ in $\cC$.
		This follows because $P_\B X$ is a distribution object of $\cC_\B$ with respect to the same sampling map as $PX$ is in $\cC$.
		
		Therefore, the non-trivial part of checking \as{}-compatible representability of $\cC_\B$, i.e.\ the right-to-left implication of \eqref{eq:adjunction_as_compatible}, boils down to the following implication
		\begin{equation}\label{eq:parametric_ascr}
			\tikzfig{parametric_ascr}
		\end{equation}
		in $\cC$. 
		This holds because $\cC$ satisfies the sampling cancellation property by \Cref{prop:scp_vs_ascomp}.
		Consequently, $\cC_\B$ is \as{}-compatibly representable.
	\end{proof}

	\begin{example}\label{ex:not_as_rep}
		We now give an example of a Markov category which is representable but not \as{}-compatibly representable. 
		Continuing on from \Cref{entire}, the Kleisli category $\Kl(D_R)$ of the distribution monad $D_R$ for an entire commutative semiring $R$ is representable with $\Kl(D_R)_\det = \cat{Set}$. 
		As we elaborate below, there is an $R$ such that $\Kl(D_R)$ is not \as{}-compatibly representable.

		Concretely, let $R$ be the semiring
		\begin{equation}
			R \coloneqq \{0,\varepsilon,1\}
		\end{equation}
		with addition and multiplication given by the following nontrivial cases:
		\begin{gather}\begin{aligned}
			1 + 1 & = 1,	& 1 + \varepsilon & = 1,	& \varepsilon + \varepsilon &= \varepsilon,	\\[4pt]
					&& \varepsilon^2 &= \varepsilon. &&
		\end{aligned}\end{gather}
		Intuitively, we can think of assigning value $1 \in R$ to outcomes that are possible and happen with nonzero probability; assigning $\varepsilon$ to outcomes that may be considered, but whose probability is $0$ or negligibly small; and assigning $0 \in R$ to outcomes that could never happen. Then the above arithmetical rules acquire straightforward interpretations.

		Since this commutative semiring $R$ has idempotent addition and multiplication, it can also be understood as a distributive lattice with $\max$ as addition and $\min$ as multiplication. In this picture, $R$ is simply the three-element totally ordered set with $0 < \varepsilon < 1$. 
		It is also clear that $R$ is entire.

		In the representable Markov category $\Kl(D_R)$, we then consider $A = X = \{a,b\}$ together with $f,g \colon A \to A$ represented by the morphisms \mbox{$f^\sharp,g^\sharp \colon A \to PA$} given by
		\begin{align*}
			f^\sharp(a) & \coloneqq \delta_a,	& f^\sharp(b) & \coloneqq \varepsilon \delta_a + \phantom{\varepsilon} \delta_b,	\\
			g^\sharp(a) & \coloneqq \delta_a,	& g^\sharp(b) & \coloneqq \phantom{\varepsilon} \delta_a + \varepsilon	\delta_b.
		\end{align*}
		We also consider $p \colon I \to A$ represented by $p^\sharp \coloneqq \delta_a + \varepsilon \delta_b$. Then we have $f^\sharp \not\ase{p} g^\sharp$, since
		\begin{equation}
			\delta_{(a,\delta_a)} + \varepsilon \delta_{(b,\varepsilon \delta_a + \delta_b)} \neq 
				\delta_{(a,\delta_a)} + \varepsilon \delta_{(b,\delta_a + \varepsilon \delta_b)}.
		\end{equation}
		However, applying sampling to the second output reduces this to
		\begin{equation}
			\delta_{(a,a)} + \varepsilon^2 \delta_{(b,a)} + \varepsilon \delta_{(b,b)} =
				\delta_{(a,a)} + \varepsilon \delta_{(b,a)}  + \varepsilon^2 \delta_{(b,b)},
		\end{equation}
		where the equation holds by $\varepsilon^2 = \varepsilon$. 
		Therefore, we have $f \ase{p} g$ and $\Kl(D_R)$ is not \as{}-compatibly representable.
	\end{example}

\section{Second-Order Stochastic Dominance}\label{sec:2ndorder}

	In this and the following sections, we state and prove generalizations of existing concepts in probability theory and statistics to representable Markov categories.
	The first one for which we do this is \emph{second-order stochastic dominance}. 
	Traditionally, this is a partial order of probability distributions on $\R$ or $\R^n$ that expresses whether a given distribution is more ``spread out'' than another.

	In the abstract setting of representable Markov categories, second-order dominance makes sense for all algebras $A$ of the monad $P$ on $\cC_\det$. 
	It is concerned with comparing two distributions on $A$, represented now by morphisms $p,q \colon I \to A$, and induces a preorder relation on $\cC(I,A)$. 
	More generally, it is a preorder relation on \emph{every} hom-set $\cC(\Theta,A)$, defined in terms of the $P$-algebra structure on $A$.

	One way to define second-order dominance in terms of the existence of a so-called \emph{dilation} $A \to A$ that takes $q$ to $p$.
	Intuitively, a dilation is a specific kind of map which increases uncertainty without affecting the expected value of distributions on~$A$.

	\begin{definition}\label{def:dilationgeneral}
		Let $\cC$ be a representable Markov category and let $e \colon PA \to A$ be an algebra of the monad $(P, \delta, P\samp)$ on $\cC_{\det}$. 
		Then, given a morphism $f \colon \Theta \to A$ in $\cC$, a morphism $t \colon \Theta \tensor A \to A$ is an \emph{$f$-dilation} (with respect to the $P$-algebra structure~$e$) if it satisfies
		\begin{equation}\label{eq:dilationgeneral}
			\tikzfig{dilationgeneral}
		\end{equation}
	\end{definition}

	Thus, in case $\Theta$ is the unit object, an $f$-dilation $t$ has to satisfy 
	\begin{equation}
		e \circ t^\sharp \ase{f} \id.
	\end{equation}
	In the probability theory context, such dilations are also called \emph{mean-preserving maps}, as the following example elucidates.

	\begin{example}
		\label{dilation_Rn}
		Given the choice of $\cC = \BorelStoch$ and a compact set $A \subseteq \R$, the canonical algebra map $e \colon P A \to A$ is the assignment of expectation values.
		Relative to it, an $f$-dilation is any Markov kernel $t \colon A \to PA$ that takes $f$-almost every point $a \in A$ to a distribution $t^\sharp(a)$ whose expectation value is $a$ itself. 
		 Similarly, for a compact $A \subseteq \R^n$, or for a closed and bounded subset of a separable Banach space, the natural choice of $e \colon PA \to A$ is one that maps each distribution to its barycenter.

		For general $\Theta$, \cref{eq:dilationgeneral} in $\cat{BorelStoch}$ becomes
		\begin{equation}
			e \bigl( t(\ph|\theta,a) \bigr) = a
		\end{equation}
		for $f(\ph|\theta)$-almost all $a \in A$ and all $\theta \in \Theta$. This again matches the intuition that the distribution $t(\ph|\theta,a)$ dilates the point $a$, suitably almost surely, but now for every $\theta \in \Theta$ separately.
	\end{example}	

	\begin{remark}
		\label{no_param}
		Throughout this section, we could restrict to the case $\Theta = I$ only, and then instantiate our definitions and results on the parametric Markov category $\cC_\Theta$ in order to recover the additional parameter dependence on any $\Theta \in \cC$. 
		We have decided against doing so for the benefit of making the greater generality of our formalism more explicitly apparent.
	\end{remark}
	
	When working with second-order dominance, it is often helpful to assume \as{}-compatible representability rather than mere representability, which we do from now on.

	\begin{lemma}\label{lem:dilationfree}
		Let $\cC$ be an \as{}-compatibly representable Markov category.
		Consider the free algebra $PX$ of the monad $(P, P\samp, \delta)$ on $\cC_{\det}$, with algebra map
		\[
			P \samp \colon PPX \to PX,
		\]
		and a morphism $f \colon \Theta \to PX$ in $\cC$.
		Then $t \colon \Theta \tensor PX \to PX$ is an $f$-dilation if and only if it satisfies
		\begin{equation}\label{eq:dilationfree}
			\tikzfig{dilationfree}
		\end{equation}
	\end{lemma}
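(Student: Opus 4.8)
The plan is to instantiate \Cref{def:dilationgeneral} at the free $P$-algebra $(PX, P\samp)$, rewrite the resulting almost sure equality using functoriality of $P$, and then peel off one layer of $P$ via \as{}-compatible representability. Concretely, put $a \coloneqq \samp_X \circ t \colon \Theta \otimes PX \to X$ and $b \coloneqq \samp_X \circ (\discard_\Theta \otimes \id_{PX}) \colon \Theta \otimes PX \to X$, and let $\tilde{f} \coloneqq (\id_\Theta \otimes f) \circ \cop_\Theta \colon \Theta \to \Theta \otimes PX$ be the obvious morphism witnessing conditioning on $f$ (for $\Theta = I$ it degenerates to $f$ itself, matching the special case recorded after \Cref{def:dilationgeneral}, and in $\BorelStoch$ it produces $(\theta,\mu)$ with $\mu\sim f(\ph|\theta)$ as in \Cref{dilation_Rn}). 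Both conditions in the statement will be almost sure equalities with respect to $\tilde{f}$: the $f$-dilation condition \eqref{eq:dilationgeneral} will become $a^{\sharp} \ase{\tilde{f}} b^{\sharp}$ (an equality of deterministic morphisms into $PX$), whereas \eqref{eq:dilationfree} is $a \ase{\tilde{f}} b$ (the un-sharped equality into $X$).

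Two small observations make this precise. First, since $P$ is a functor,
\[
  P\samp \circ t^{\sharp} \;=\; P\samp \circ Pt \circ \delta \;=\; P(\samp \circ t) \circ \delta \;=\; (\samp \circ t)^{\sharp} \;=\; a^{\sharp},
\]
so the composite $e \circ t^{\sharp}$ appearing in \eqref{eq:dilationgeneral} with $e = P\samp$ is exactly $a^{\sharp}$; and since $\discard_\Theta \otimes \id_{PX}$ is deterministic with $\samp_X \circ (\discard_\Theta \otimes \id_{PX}) = b$, it equals $b^{\sharp}$. Unwinding \Cref{def:dilationgeneral} through \Cref{def:ase} thus identifies ``$t$ is an $f$-dilation'' with $a^{\sharp} \ase{\tilde{f}} b^{\sharp}$. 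Second, \Cref{def:adjunction_as_compatible}, applied to the pair $a, b \colon \Theta \otimes PX \to X$ with prior $p = \tilde{f}$, says precisely that $a^{\sharp} \ase{\tilde{f}} b^{\sharp} \iff a \ase{\tilde{f}} b$, which is the claimed equivalence. (One may equally phrase the nontrivial direction via the sampling cancellation property of \Cref{def:scp,prop:scp_vs_ascomp}.)

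Note that the ``only if'' direction of the lemma is automatic and uses nothing beyond representability: post-composing $a^{\sharp} \ase{\tilde{f}} b^{\sharp}$ with $\samp_X$ and using $\samp_X \circ a^{\sharp} = a$, $\samp_X \circ b^{\sharp} = b$ gives $a \ase{\tilde{f}} b$. The real content is the converse, whose only genuine input is the right-to-left implication of \eqref{eq:adjunction_as_compatible}, i.e.\ the \as{}-compatibility hypothesis. I expect the main obstacle to be purely bookkeeping: verifying that the string diagram \eqref{eq:dilationgeneral} specialized at $e = P\samp$ really unfolds to $a^{\sharp} \ase{\tilde{f}} b^{\sharp}$ with the $\Theta$-wire carried along correctly, and that \eqref{eq:dilationfree} is literally $a \ase{\tilde{f}} b$. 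Once those two diagram identifications are in place, the proof is a one-line appeal to \Cref{def:adjunction_as_compatible} together with the functoriality identity $P\samp \circ t^{\sharp} = (\samp \circ t)^{\sharp}$.
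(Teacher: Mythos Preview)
Your proposal is correct and follows essentially the same route as the paper. The paper's proof also reduces the $f$-dilation condition with $e = P\samp$ to the condition \eqref{eq:dilationfree} by passing through one application of $\samp_X$ (using the associativity square \eqref{samp_associative} and $\samp \circ t^\sharp = t$) and then invokes the sampling cancellation property for the converse; your version packages the same computation more uniformly by first observing $P\samp \circ t^{\sharp} = (\samp \circ t)^{\sharp}$ via functoriality, so that both conditions are literally $a^{\sharp} \ase{\tilde f} b^{\sharp}$ and $a \ase{\tilde f} b$, and then appealing directly to \Cref{def:adjunction_as_compatible}.
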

	\begin{proof}
		\Cref{eq:dilationfree} follows from \eqref{eq:dilationgeneral} for $e = P\samp$, namely by applying $\id_{PX} \otimes \samp_X$ to it and using the commutativity of diagram~\eqref{samp_associative} together with $\samp \circ t^\sharp = t$.
		
		Conversely, since the sampling cancellation property follows from \as{}-compatible representability by \Cref{prop:scp_vs_ascomp}, we can follow the same reasoning backwards\footnote{Note that we cannot merely remove the sampling maps since $t$ is generally not deterministic.}
		and conclude the equation 
		\begin{equation}\label{eq:dilationfree2}
			\tikzfig{dilationfree2}
		\end{equation}
		as was to be shown.
	\end{proof}

	As the following proposition shows, the existence of dilations that convert between morphisms can be related to the existence of partial evaluations \cite{fritz2020monads}.
	\begin{proposition}\label{prop:2ndorder}
		Let $\cC$ be an \as{}-compatibly representable Markov category with distribution functor $P$ and for which conditionals exist. Then for any $P$-algebra $e \colon PA \to A$ in $\cC_\det$ and any morphisms $p,q  \colon \Theta \to A$ in $\cC$, the following two conditions are equivalent:
		\begin{enumerate}[label=(\roman*)]
			\item\label{it:pev} There exists a morphism $r \colon \Theta \to PA$ such that the following diagram commutes:
				\begin{equation}\label{eq:pevcond}
					\begin{tikzcd}
						\Theta \ar[bend left]{drr}{p} \ar[swap,bend right]{ddr}{q} \ar{dr}[description]{r}				\\
													& PA \ar[swap]{r}{\samp} \ar{d}{e}	& A	\\
													& A
					\end{tikzcd}
				\end{equation}
				
			\item \label{it:dilation} There exists a $q$-dilation $t \colon \Theta \tensor A \to A$ which converts $q$ to $p$ in the following sense:
				\begin{equation}\label{eq:measure_preserving}
					\tikzfig{dilation7}
				\end{equation}
		\end{enumerate}
	\end{proposition}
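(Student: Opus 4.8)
The plan is to prove the two implications separately, using that $\mathsf{samp}$ is a counit (so $\mathsf{samp}\circ r = e\circ r$ holds on the nose iff $r$ "classifies" a decomposition of $q$ through $PA$) and that $e$ is a $P$-algebra (so $e\circ\delta = \mathrm{id}$ and $e\circ P\mathsf{samp} = e\circ\mathsf{samp}$). The key observation linking the two conditions is that a morphism $r\colon\Theta\to PA$ is the same thing (via the distribution functor and the bijection $\cC_\det(A,PA)\cong\cC(A,A)$, applied after pairing with $\Theta$) as a morphism $\Theta\otimes A\to A$ together with the data of how it is fed by $q$; more precisely, one uses the existence of conditionals to factor $r$ through $q$.

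\emph{From \ref{it:dilation} to \ref{it:pev}.} Given a $q$-dilation $t\colon\Theta\otimes A\to A$ satisfying \eqref{eq:measure_preserving}, I would set $r \coloneqq (\id_\Theta \otimes t^\sharp)$ suitably composed with the copy map on $\Theta$ and with $q$ — concretely, $r$ is the morphism $\Theta\to PA$ obtained by copying $\Theta$, applying $q$ on one strand to get a point of $A$, and then applying $t^\sharp\colon\Theta\otimes A\to PA$. The upper-right triangle $\mathsf{samp}\circ r = p$ is then exactly the dilation-conversion equation \eqref{eq:measure_preserving} after using $\mathsf{samp}\circ t^\sharp = t$. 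The lower-left triangle $e\circ r = q$ is exactly the $q$-dilation condition \eqref{eq:dilationgeneral}: composing $t^\sharp$ with $e$ and feeding it by $q$ gives back $q$ up to $q$-a.s.\ equality, which is precisely what is needed for the diagram to commute (here one may need to invoke \as{}-compatible representability / the sampling cancellation property to promote an a.s.\ equation involving $\mathsf{samp}$ into the strict commuting triangle, or simply read \eqref{eq:pevcond} as an a.s.\ statement — I would check which convention the paper intends).

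\emph{From \ref{it:pev} to \ref{it:dilation}.} Given $r\colon\Theta\to PA$ with $\mathsf{samp}\circ r = p$ and $e\circ r = q$, I would use the existence of conditionals in $\cC$ to build a "Bayesian-inverse"-style reparametrization: form the morphism $\Theta \to A\otimes PA$ by copying $\Theta$, applying $q = e\circ r$ on the first output and $r$ on the second, and take a conditional of this with respect to $A$, yielding $t'\colon A\otimes\Theta\to PA$; then set $t\coloneqq\mathsf{samp}\circ t'$ (after reordering inputs to $\Theta\otimes A$). By the defining property of the conditional, feeding $t'$ with $q$ recovers the joint $(q,r)$, so composing with $e$ on the $PA$-output recovers $(q, e\circ r) = (q,q)$ on the diagonal, which unwinds to the $q$-dilation equation \eqref{eq:dilationgeneral}; and composing with $\mathsf{samp}$ on the $PA$-output recovers $(q,\mathsf{samp}\circ r) = (q,p)$, whose second marginal gives the conversion equation \eqref{eq:measure_preserving}. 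Throughout, \Cref{lem:dilationfree} and \Cref{prop:scp_vs_ascomp} are the tools for moving sampling maps across a.s.\ equations.

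\emph{Main obstacle.} I expect the genuinely delicate step to be the backward direction: getting a conditional to produce a morphism whose \emph{two} different post-compositions (by $e$ and by $\mathsf{samp}$) simultaneously give the two required equations. This works because $e$ and $\mathsf{samp}$ act on the same $PA$-output of a single joint morphism $\Theta\to A\otimes PA$ that the conditional reconstructs, so once the joint is pinned down (up to $q$-a.s.) both equations follow; but verifying that the a.s.\ qualifiers line up correctly — i.e., that "$q$-a.s." on the reconstructed joint is exactly what \eqref{eq:dilationgeneral} and \eqref{eq:measure_preserving} demand — is where the sampling cancellation property and careful diagram bookkeeping will be needed. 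A secondary subtlety is ensuring $t$ is allowed to be non-deterministic (it is, since $\mathsf{samp}\circ t'$ need not be), so one must resist the temptation to cancel sampling maps illegitimately, as flagged in the footnote to \Cref{lem:dilationfree}.
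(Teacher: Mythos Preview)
Your direction \ref{it:dilation} $\Rightarrow$ \ref{it:pev} is correct and matches the paper: with $r$ built by copying $\Theta$, applying $q$ on one wire and $t^\sharp$ on the pair, the triangle $\samp\circ r = p$ is exactly \eqref{eq:measure_preserving}, and $e\circ r = q$ follows by marginalizing \eqref{eq:dilationgeneral} over the first $A$-output. Both hold on the nose, so your hedging about a.s.\ qualifiers is unnecessary there.

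The direction \ref{it:pev} $\Rightarrow$ \ref{it:dilation} has a genuine gap, and it lies in your choice of joint. You form $\Theta\to A\otimes PA$ as $(q\otimes r)\circ\cop_\Theta$, copying the \emph{input} $\Theta$ and applying $q$ and $r$ separately. The paper instead uses $(e\otimes\id_{PA})\circ\cop_{PA}\circ r$, copying the \emph{output} of $r$ in $PA$ and applying $e$ to one copy. These agree only when $r$ is deterministic, which is not assumed. The difference is fatal for the dilation property: post-composing the paper's joint with $\id_A\otimes e$ yields $\cop_A\circ q$ (since $e$ is deterministic), which is precisely the right-hand side of \eqref{eq:dilationgeneral}; post-composing your joint with $\id_A\otimes e$ yields $(q\otimes q)\circ\cop_\Theta$, which is not the same thing when $q$ is non-deterministic. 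Concretely, since in your joint the two outputs are conditionally independent given $\Theta$, one valid conditional is simply $t' \coloneqq r\circ(\discard_A\otimes\id_\Theta)$, giving $t = \samp\circ t' = p\circ(\discard_A\otimes\id_\Theta)$. This $t$ satisfies \eqref{eq:measure_preserving}, but the left-hand side of \eqref{eq:dilationgeneral} becomes $(q\otimes(e\circ p^\sharp))\circ\cop_\Theta$ with $e\circ p^\sharp$ deterministic, so \eqref{eq:dilationgeneral} forces $q$ to be deterministic. As all conditionals of a fixed morphism are a.s.\ equal (and a.s.-compatible representability transfers this to the $(\ph)^\sharp$-level), no other choice of conditional rescues your joint. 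The fix is to copy the $PA$-output of $r$; after that one still needs the sampling cancellation property and the $P$-algebra axiom $e\circ P\samp = e\circ\samp$ to pass from the (generally non-deterministic) conditional $s$ to the dilation condition on $t^\sharp = P\samp\circ s^\sharp$, which is where the paper spends its effort.
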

	\begin{proof}
		We split the proof into two parts each consisting of one of the implications.
		\begin{enumerate}
			\item[\ref{it:pev} $\Rightarrow$ \ref{it:dilation}:]
				Given an $r \colon \Theta \to PA$ that makes the diagram (\ref{eq:pevcond}) commute, we construct $s \colon A \otimes \Theta \to A$ as a conditional of the morphism
				\begin{equation}\label{eq:2ndorder}
					\tikzfig{2ndorder_v3}
				\end{equation}
				with respect to the left output, so that 
				the defining equation of conditionals
				\begin{equation}\label{eq:2ndorder2}
					\tikzfig{2ndorder2_v3}
				\end{equation}
				holds.
				Upon defining $t \coloneqq \samp_A \circ s$, the property (\ref{eq:measure_preserving}) then follows immediately by applying $\discard_A \tensor \id_{PA}$ to equation (\ref{eq:2ndorder2}).

				It remains to be shown that $t$ is a $q$-dilation. 
				Applying $\id_A \otimes e$ to equation (\ref{eq:2ndorder2}), we get
				\begin{equation}\label{eq:2ndorder3}
					\tikzfig{2ndorder3_v3}
				\end{equation}
				where the second step uses the assumption that $e$ is deterministic. Applying the sampling cancellation property, we obtain
				\begin{equation}\label{eq:2ndorder6}
					\tikzfig{2ndorder6}
				\end{equation}
				Now the fact that $t^{\sharp} = P\samp \circ s^{\sharp}$ satisfies equation (\ref{eq:dilationgeneral}) follows by applying $\id_A \tensor e$ to equation (\ref{eq:2ndorder6}) and using the fact that 
					$e$ is a $P$-algebra:
				\begin{equation}\label{eq:2ndorder7}
					\tikzfig{2ndorder7}
				\end{equation}
				Thus, we have shown the implication \ref{it:pev} $\Rightarrow$ \ref{it:dilation}.
		
			\item[\ref{it:dilation} $\Rightarrow$ \ref{it:pev}:]
				Given a morphism $t \colon \Theta \tensor A \to A$ satisfying (\ref{eq:measure_preserving}) and (\ref{eq:dilationgeneral}), we define $r$ via 
				\begin{equation}\label{eq:pev}
					\tikzfig{pev_v2}
				\end{equation}
				Applying $\samp_A$ to this equation yields
				\begin{equation}\label{eq:pev2}
					\tikzfig{pev2}
				\end{equation}
				whence $\samp_A r = p$ follows by virtue of the fact that $t$ converts $q$ to $p$.
				
				On the other hand, applying $e$ to equation (\ref{eq:pev}) gives $e \circ r = q$ by the assumption that $t$ is a $q$-dilation with respect to $e$. 				\qedhere
		\end{enumerate}
	\end{proof}
	\begin{definition}
		\label{2ndorder_stoch_dom}
		If the equivalent conditions of \Cref{prop:2ndorder} hold, then we say that $q$ \emph{second-order dominates} $p$ with respect to $e$, and denote this relation as
		\begin{equation}
			p \sqsubseteq q.
		\end{equation}
	\end{definition}

	By \Cref{dilation_Rn}, this recovers the usual notion of second-order stochastic dominance for probability measures on any compact $A \subseteq \R^n$ if we take $\cC$ to be $\BorelStoch$ and $\Theta = I$.

\section{Comparison of Statistical Experiments}\label{sec:blackwell}

\subsection{Informativeness of Statistical Experiments}\label{sec:informativeness}

	Dilations also appear in the context of comparison of statistical experiments, as introduced by Blackwell \cite{blackwell1951comparison}.
	In this case, statistical experiments are modeled by families of measures indexed by a parameter set $\Theta$ that labels the distinct hypotheses one aims to discern by performing the experiment.
	Here, we represent statistical experiments abstractly as morphisms $\Theta \to X$ in a Markov category.
	
	Commonly, the question of comparing two experiments $f,g$ in terms of whether $f$ is \emph{more informative} about $\Theta$ than $g$ is, amounts to the existence of a map $c$ satisfying $c \, f = g$.
	We can interpret $c$ as a post-processing of the data generated by experiment $f$ in a way that produces the data of experiment $g$.
	If such a $c$ exists, we refer to it as \emph{garbling map} and say that $f$ is more informative than $g$, denoted by $f \succeq g$.
	A slightly more general version of this notion was introduced by Golubtsov in \cite{golubtsov2002kleisli}.
	
	\begin{example}\label{ex:rod}
		For a concrete example, consider a company that manufactures metal rods.
		After production, a selection of rods is tested for structural integrity.
		The hypothesis space $\Theta =\{\safe,\faulty\}$ thus consists of two possibilities: 
		Either the selected rod is safe ($\safe$) or it is faulty ($\faulty$).
		
		There are two tests available.
		The threshold test $g$ checks whether the rod withstands hanging a weight from its midpoint, while the oscillation test $f$ checks whether the rod withstands transverse oscillations of a certain type.		
		Each test has two possible outcomes:
		Either the rod passes ($\pass$) or fails ($\fail$) the test, so that we have $X = Y = \{\pass,\fail\}$.
		According to our model of metal rods and their defects (that we assume to be correct, up to the experimental precision) the outcomes of the two tests are generally distributed as follows:
		\begin{align*}
			g (\pass | \safe) &= 0.72,  &  g (\pass | \faulty) &= 0.45,  &&&  f (\pass | \safe) &= 0.96,  &  f (\pass | \faulty) &= 0.6, \\
			g (\fail | \safe) &= 0.28,  &  g (\fail | \faulty) &= 0.55,  &&&  f (\fail | \safe) &= 0.04,  &  f (\fail | \faulty) &= 0.4. 
		\end{align*}
		However, suppose that the test results cannot be trusted when both are executed on the same rod, for example because either test may afflict structural damage to the rod even upon passing the test.
		Then only one of $g$ or $f$ can be performed, and we are interested to know whether doing one of the experiments provides us with strictly more information about whether a given rod is safe.
		To that end, we can use the ordering $\succeq$.
		In particular, we have $f \succeq g$, because the stochastic map
		\begin{align*}
			c(\pass | \pass) &= 0.75  &  c(\pass | \fail) &= 0 \\
			c(\fail | \pass) &= 0.25  &  c(\fail | \fail) &= 1
		\end{align*}
		achieves the conversion of the oscillation test to the threshold test by garbling.
	\end{example}
	
	The informativeness ordering defined via garbling maps can be equivalently expressed in terms of loss functions or Bayesian utilities in decision theory.
	For an account of the origins of this equivalence, see \cite[Section 3]{le1996comparison} for example.
	Recently, it has also been expressed in categorical terms by de Oliveira in \cite{de2018blackwell}.

	In what follows, we make use of the following relaxed informativeness ordering relative to a particular prior $m \colon I \to \Theta$.
	\begin{definition}\label{def:informativeness}
		Given morphisms $m \colon I \to \Theta$, $f \colon \Theta \to X$ and $g \colon \Theta \to Y$ in any Markov category $\cC$, we say that $f$ is \emph{\as{$m$} more informative} than $g$, denoted $f \asinf{m} g$, if there exists 
			a morphism $c \colon X \to Y$ such that we have $c\, f \ase{m} g$,
		\begin{equation}\label{eq:informativeness_m-as}
			\tikzfig{informativeness_m-as}
		\end{equation}
	\end{definition}

	Note that the informativeness orderings are actually \emph{preorders} on the collection of morphisms out of $\Theta$, but we use the term ``ordering'' as synonymous with ``preordering''.
	
	\begin{remark}
		The informativeness ordering and its approximate versions that we leave out of our analysis here are particularly interesting in situations when the two experiments $f$ and $g$ \emph{cannot} be implemented jointly or when it is costly to do so.
		They tell one which of the two mutually exclusive choices for an experiment is a better choice as far as learning about the hypothesis $\Theta$ is concerned.
		
		On the other hand, if one \emph{could} execute both $f$ and $g$ simultaneously, a more relevant question may be that of whether $f$ (or $g$) is more informative than their joint implementation, which would be an experiment $h \colon \Theta \to X \otimes Y$ with marginals $f$ and $g$.
		In practice, $X$ and $Y$ need not be conditionally independent given $\Theta$.
		However, it is customary to assume so, in accordance with the assumption that $\Theta$ constitutes a complete set of relevant parameters.
		The joint experiment can then be expressed as
		\begin{equation}\label{eq:joint}
			\tikzfig{joint}
		\end{equation}
		It is not hard to see that with this kind of joint implementation, the relation between $f$ and $g$ induced by $f \asinf{m} h$ is \emph{different} from the basic informativeness ordering $f \asinf{m} g$.
		Nevertheless, as \Cref{thm:informativeness_vs_sufficiency} below shows, the latter \emph{is} closely connected to the question of whether $X$ (or $Y$) constitutes a \emph{sufficient statistic} for $X \times Y$, but the joint implementation of $f$ and $g$ is not in general the one given by \cref{eq:joint} (in particular, see the left equation of~\eqref{eq:recovery_as_garbling}).
	
		The considerations in the paragraph above do not necessarily take into account that there might be a cost associated with performing the experiments, which would make the comparison of the information contained in two samples (one from $X$ and one from $Y$) with that of one sample (from $X$, say) less meaningful.\footnote{For example, consider having to make a choice between one of two large-scale medical trials, in a situation where conducting both would not be feasible.}
		Instead, if one compares the same number of samples, either one from $X$ or one from $Y$, then the relevant relation is the basic informativeness ordering given by $f \asinf{m} g$.
	
		A slightly different context in which the informativeness ordering is relevant is the theory of communication.
		There, we commonly interpret $f$ and $g$ as distinct encodings of $\Theta$ in $X$ and $Y$ respectively.
		In general, $f$ and $g$ lose some of the information contained in $\Theta$.
		The preorder $\succeq$ then tells us which of the two encodings unambiguously retains more of this information, and whether such a comparison can be made at all.
	\end{remark}
	
	A measure-theoretic version the following theorem appears in \cite[Theorem 7.2.16]{torgersen1991comparison}, but its roots can be traced back to \cite{bahadur1955characterization}.
	\begin{theorem}\label{thm:informativeness_vs_sufficiency}
		Let $m \colon I \to \Theta$, $f \colon \Theta \to X$ and $g \colon \Theta \to Y$ be morphisms in a Markov category $\cC$.
		If $\cC$ has conditionals, then the following are equivalent:
		\begin{enumerate}[label=(\roman*)]
			\item \label{it:informativeness} $f$ is \as{$m$} more informative than $g$.
			\item \label{it:sufficiency} There exists a morphism $h \colon \Theta \to X \otimes Y$ with marginals $m$-almost surely given by $f$ and $g$ respectively, such that the deterministic morphism $\id_X \otimes \discard_Y$ is a sufficient statistic\footnote{See \cite[Definition 14.3]{fritz2019synthetic} for a synthetic definition of sufficiency in Markov categories.} for $h$.
			\item \label{it:cond_indepenedence} There exists a morphism $\mu \colon I \to \Theta \otimes X \otimes Y$ satisfying
				\begin{align*}
					\tikzfig{informativeness_vs_sufficiency}
				\end{align*}
		\end{enumerate}
	\end{theorem}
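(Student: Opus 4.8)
The plan is to prove the cycle of implications \ref{it:informativeness} $\Rightarrow$ \ref{it:cond_indepenedence} $\Rightarrow$ \ref{it:sufficiency} $\Rightarrow$ \ref{it:informativeness}. In all three steps the central object is the \emph{coupling} morphism $\mu \colon I \to \Theta \otimes X \otimes Y$ recording the joint behaviour of the prior $m$ and the two experiments; the hypothesis that $\cC$ has conditionals enters in the last two steps (to pass between $\mu$ and its conditionals), and the standard bookkeeping facts about almost sure equality---that $\ase{m}$ is an equivalence relation and is preserved by pre- and post-composition, as recorded in~\cite{fritz2019synthetic}---are used throughout.

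For \ref{it:informativeness} $\Rightarrow$ \ref{it:cond_indepenedence} I would take a garbling $c \colon X \to Y$ with $c f \ase{m} g$ and set
\[
\mu \coloneqq (\id_\Theta \otimes \id_X \otimes c) \circ (\id_\Theta \otimes \cop_X) \circ (\id_\Theta \otimes f) \circ \cop_\Theta \circ m,
\]
that is, the process that samples $\theta$ from $m$, then $x$ from $f$, copies $x$, and finally samples $y$ from $c$. Discarding $Y$ returns $(\id_\Theta \otimes f) \circ \cop_\Theta \circ m$ exactly (using $\discard_Y \circ c = \discard_X$ and $(\id_X\otimes\discard_X)\circ\cop_X=\id_X$), and discarding $X$ returns $(\id_\Theta \otimes (c f)) \circ \cop_\Theta \circ m$, which equals the graph of $g$ over $m$ precisely because $c f \ase{m} g$. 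Moreover $\Theta$ and $Y$ are conditionally independent given $X$ under $\mu$: a conditional of $\mu$ with respect to $X$ is the product of a Bayesian inverse $f^{\dagger} \colon X \to \Theta$ of $f$ relative to $m$ with the map $c \colon X \to Y$. Hence $\mu$ satisfies the defining equations of \ref{it:cond_indepenedence}.

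For \ref{it:cond_indepenedence} $\Rightarrow$ \ref{it:sufficiency}, since the $\Theta$-marginal of $\mu$ is $m$, I would factor $\mu = (\id_\Theta \otimes h) \circ \cop_\Theta \circ m$ with $h \colon \Theta \to X \otimes Y$ a conditional of $\mu$ with respect to $\Theta$; the marginal conditions on $\mu$ become $(\id_X \otimes \discard_Y) \circ h \ase{m} f$ and $(\discard_X \otimes \id_Y) \circ h \ase{m} g$, and the conditional independence $\Theta \perp Y \mid X$ in $\mu$ is, via the characterization of sufficiency in terms of conditional independence, exactly the statement that the deterministic statistic $\id_X \otimes \discard_Y$ is sufficient for $h$ in the sense of \cite[Definition~14.3]{fritz2019synthetic}. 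Conversely, for \ref{it:sufficiency} $\Rightarrow$ \ref{it:informativeness} one unpacks sufficiency of $\id_X \otimes \discard_Y$ for $h$ into a reconstruction kernel $\rho \colon X \to X \otimes Y$ with $h \ase{m} \rho \circ (\id_X \otimes \discard_Y) \circ h$; combining this with $(\id_X \otimes \discard_Y) \circ h \ase{m} f$ and post-composition invariance of $\ase{m}$ gives $h \ase{m} \rho \circ f$, and discarding $X$ then yields $g \ase{m} (\discard_X \otimes \id_Y) \circ \rho \circ f$, so that $c \coloneqq (\discard_X \otimes \id_Y) \circ \rho$ is the desired garbling and $f \asinf{m} g$.

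The step I expect to be the main obstacle is the middle equivalence \ref{it:cond_indepenedence} $\Leftrightarrow$ \ref{it:sufficiency}: one must unfold the synthetic definition of a sufficient statistic and verify carefully that, for the particular statistic $\id_X \otimes \discard_Y$, sufficiency for $h$ coincides with the conditional independence of $\Theta$ and $Y$ given $X$ in the coupling $\mu$---this is where conditionals, the passage between the two descriptions of the joint distribution, and the almost-sure-equality bookkeeping all have to be aligned. By contrast, identifying the various marginals of $\mu$ and of $h$ and chasing the string diagrams for the equations in \ref{it:cond_indepenedence} should be routine.
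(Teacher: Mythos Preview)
Your proof is correct, but it runs the cycle in the opposite direction from the paper: you do \ref{it:informativeness} $\Rightarrow$ \ref{it:cond_indepenedence} $\Rightarrow$ \ref{it:sufficiency} $\Rightarrow$ \ref{it:informativeness}, whereas the paper does \ref{it:informativeness} $\Rightarrow$ \ref{it:sufficiency} $\Rightarrow$ \ref{it:cond_indepenedence} $\Rightarrow$ \ref{it:informativeness}. Your \ref{it:informativeness} $\Rightarrow$ \ref{it:cond_indepenedence} is essentially the paper's two steps \ref{it:informativeness} $\Rightarrow$ \ref{it:sufficiency} $\Rightarrow$ \ref{it:cond_indepenedence} fused into one, and your \ref{it:sufficiency} $\Rightarrow$ \ref{it:informativeness} is pleasantly direct---the paper reaches \ref{it:informativeness} from \ref{it:sufficiency} only by passing through \ref{it:cond_indepenedence} and then extracting the garbling $c$ from the conditional-independence decomposition via a Bayesian inverse, whereas you read off $c$ as the $Y$-marginal of the reconstruction map $\rho$ immediately.

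The one place to be careful is exactly where you flagged it: in \ref{it:cond_indepenedence} $\Rightarrow$ \ref{it:sufficiency} you invoke a ``characterization of sufficiency in terms of conditional independence'' as if it were an off-the-shelf result, but no such black-box statement is available in~\cite{fritz2019synthetic} for this particular configuration---indeed, establishing that link is part of what the present theorem does. Concretely, you need to take the decomposition of $\mu$ witnessed by condition~(b), say via $n \colon I \to X$, $k \colon X \to \Theta$, and $c \colon X \to Y$, set $\alpha \coloneqq (\id_X \otimes c) \circ \cop_X$, and then verify $h \ase{m} \alpha \circ (\id_X \otimes \discard_Y) \circ h$ by recognizing that $h \ase{m} (\id_X \otimes c) \circ \cop_X \circ k^\dagger$ (with $k^\dagger$ the Bayesian inverse of $k$ relative to $n$) is itself a valid conditional of $\mu$ given $\Theta$. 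This is straightforward but does require the Bayesian-inverse bookkeeping; the paper sidesteps it by proving the reverse implication \ref{it:sufficiency} $\Rightarrow$ \ref{it:cond_indepenedence} instead, where the analogous computation (deriving the conditional-independence splitting from the sufficiency witness $\alpha$) is marginally more transparent.
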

	The reading of condition $(b)$ is that there are some morphisms that can take place of the empty boxes so that the equation holds.
	In other words, it states that $\mu$ displays the conditional independence \cite[Definition 12.1]{fritz2019synthetic} of $\Theta$ and $Y$ given $X$.
	Also note that conditions $(c)$ and $(d)$ are independent of the choice of conditionals by the \as{}-uniqueness of conditionals.
	\begin{proof}
		We split the proof into three implications.
		\begin{enumerate}
			\item[\ref{it:informativeness} $\Rightarrow$ \ref{it:sufficiency}:]
				The condition that $\id_X \otimes \discard_Y$ is a sufficient statistic for $h$ \cite[Definition 14.3]{fritz2019synthetic} translates to the existence of a morphism $\alpha \colon X \to X \otimes Y$ such that
				\begin{equation}\label{eq:sufficient_statistic}
					\tikzfig{sufficient_statistic}
				\end{equation}
				holds.
				Assuming condition \ref{it:informativeness} is true, we can use the garbling map $c$ satisfying \cref{eq:informativeness_m-as} to define the requisite $h$ and $\alpha$ via
				\begin{equation}\label{eq:recovery_as_garbling}
					\tikzfig{recovery_as_garbling}
				\end{equation}
				from which \cref{eq:sufficient_statistic} follows.
				Since the marginals of $h$ so defined are $f$ and $g$ respectively (the latter $m$-almost surely), we obtain the desired \mbox{implication \ref{it:informativeness} $\implies$ \ref{it:sufficiency}}.				
			
			\item[\ref{it:sufficiency} $\Rightarrow$ \ref{it:cond_indepenedence}:]
				We can define a morphism $\mu$ in terms of $h$ and $m$ as follows:
				\begin{equation}\label{eq:Markov_triple_from_joint}
					\tikzfig{Markov_triple_from_joint}
				\end{equation}
				Conditions $(a)$, $(c)$, and $(d)$ are then immediate.
				In order to show that condition $(b)$ holds as well, we can use \cref{eq:sufficient_statistic} with the middle output marginalized to get
				\begin{equation}\label{eq:cond_indepenedence_proof}
					\tikzfig{cond_indepenedence_proof}
				\end{equation}
				where the second equation follows from assuming condition~\ref{it:sufficiency}, in particular from the fact that the $\Theta \to X$ marginal of $h$ is \as{$m$} equal to $f$.
				In order to obtain condition $(b)$, we can then apply the definition of a Bayesian inverse of $f$ with respect to $m$ to get
				\begin{equation}\label{eq:cond_indepenedence_proof2}
					\tikzfig{cond_indepenedence_proof2}
				\end{equation}
				as required.
				Consequently, condition~\ref{it:sufficiency} indeed implies condition~\ref{it:cond_indepenedence}.
			
			\item[\ref{it:cond_indepenedence} $\Rightarrow$ \ref{it:informativeness}:]
				Let the decomposition of $\mu$ as given by condition $(b)$ be 
				\begin{equation}\label{eq:cond_indepenedence_decomp}
					\tikzfig{cond_indepenedence_decomp}
				\end{equation}
				It remains to show that the morphism $c \colon X \to Y$ from such a decomposition can act as the garbling that achieves the conversion of $f$ to $g$, $m$-almost surely.
				Taking the conditional of \cref{eq:cond_indepenedence_decomp} and using $k \, n = m$, which follows from condition $(a)$, yields
				\begin{equation}\label{eq:garbling_from_cond_indepenedence}
					\tikzfig{garbling_from_cond_indepenedence}
				\end{equation}
				where $k^\dagger$ denotes the Bayesian inverse of $k$ with respect to $n$.
				Marginalizing over $Y$ in \cref{eq:garbling_from_cond_indepenedence} and using condition $(c)$ then gives $k^\dagger \ase{m} f$.
				Finally, by marginalizing \cref{eq:garbling_from_cond_indepenedence} over $X$ and using condition $(d)$, we arrive at $g \ase{m} c \, f$ and the proof is thus complete.\qedhere
		\end{enumerate}
	\end{proof}

\subsection{The Classical Blackwell--Sherman--Stein Theorem}\label{sec:classical_BSS}
	
	Insofar as there is a host of equivalent ways to characterize the informativeness ordering \cite[Theorem~1]{le1996comparison}, we are interested in the extent to which one can generalize these results to the abstract setting of Markov categories
		and proved with synthetic methods.
	We already saw an instance of such a result in the form of \Cref{thm:informativeness_vs_sufficiency}, which relates informativeness to sufficient statistics and conditional independence.
	For the remainder of \Cref{sec:blackwell}, we focus on the \emph{Blackwell--Sherman--Stein Theorem} \cite[Theorem~6]{blackwell1951comparison}, also known as the \emph{dilation criterion}.
	It states that the informativeness order $\succeq$ for statistical experiments coincides with the second-order stochastic dominance order of the so-called \emph{standard measures} on $P \Theta$, the description of which we turn to now.
	
	An intuitive way to think of standard measures is to interpret a statistical experiment $f \colon \Theta \to X$ as a way of learning about the underlying hypothesis represented by the parameter set $\Theta$. 
	From this perspective, given a prior $m \in P \Theta$, one can use Bayesian updating to find the corresponding posterior on $P\Theta$, which of course depends on the value of $X$ observed.
	Thus, it can be represented by the measurable map $(f^{\dagger})^\sharp \colon X \to P\Theta$, which, recalling \Cref{not:sharp}, contains the same information as the Markov kernel $f^\dagger \colon X \to \Theta${\,\textemdash\,}the Bayesian inverse of $f$.
	The standard measure is then an element of $PP\Theta$ obtained as a mixture of these posteriors with respect to the chosen prior $m$, or more precisely, with respect to the distribution of $X$ one would expect, were the ``true'' value of the hypothesis $\Theta$ sampled from $m$.
	
	The morphism given by the composite
	\begin{equation}
		\begin{tikzcd}
			\Theta \ar{r}{f} & X \ar{r}{(f^\dagger)^\sharp} & P \Theta
		\end{tikzcd}
	\end{equation}
	is commonly referred to as the \emph{standard experiment} $\hat{f}$ of $f$.
	Intuitively, for a given ``true'' hypothesis as input, the standard experiment outputs the distribution over posteriors that results from conducting the experiment and applying Bayesian updating.
	Since the outcome of the experiment itself is random, we obtain a distribution over posteriors rather than a mere posterior.
	Therefore, the standard experiment is typically not deterministic and it clearly depends on the prior $m$.
	The standard measure is then nothing but $\hat{f}$ applied to $m$.

	\begin{example}\label{ex:rod_stndmeas}
		Traditionally, one uses a uniform prior $m$ to define standard measures for finite parameter sets, but any strictly positive probability measure would do.
		Continuing our \Cref{ex:rod} and choosing a uniform prior
		\begin{align*}
			m(\safe) &= 0.5  &  m(\faulty) &= 0.5
		\end{align*}
		gives the following posteriors (expressed as functions rather than Markov kernels):
		\begin{align*}
			\bigl(g^{\dagger}\bigr)^\sharp (\pass) &= \frac{0.72 \,\delta_{\safe} + 0.45 \,\delta_{\faulty}}{0.72 + 0.45} \approx 0.62 \,\delta_{\safe} + 0.38 \,\delta_{\faulty}  \\[4pt]
			\bigl(g^{\dagger}\bigr)^\sharp (\fail) &= \frac{0.28 \,\delta_{\safe} + 0.55 \,\delta_{\faulty}}{0.28 + 0.55} \approx 0.34 \,\delta_{\safe} + 0.66 \,\delta_{\faulty}  \\[4pt]
			\bigl(f^{\dagger}\bigr)^\sharp (\pass) &= \frac{0.96 \,\delta_{\safe} + 0.6 \,\delta_{\faulty}}{0.96 + 0.6} \approx 0.62 \,\delta_{\safe} + 0.38 \,\delta_{\faulty}  \\[4pt]
			\bigl(f^{\dagger}\bigr)^\sharp (\fail) &= \frac{0.04 \,\delta_{\safe} + 0.4 \,\delta_{\faulty}}{0.04 + 0.4} \approx 0.09 \,\delta_{\safe} + 0.91 \,\delta_{\faulty} .
		\end{align*}
		The expected distributions of test outcomes if exactly half of the rods were faulty are
		\begin{align*}
			g \circ m (\pass) &= 0.5 \, g (\pass | \safe) + 0.5 \, g (\pass | \faulty) = 0.585  \\
			g \circ m (\fail) &= 0.5 \, g (\fail | \safe) + 0.5 \, g (\fail | \faulty) = 0.415 \\
			f \circ m (\pass) &= 0.5 \, f (\pass | \safe) + 0.5 \, f (\pass | \faulty) = 0.78 \\
			f \circ m (\fail) &= 0.5 \, f (\fail | \safe) + 0.5 \, f (\fail | \faulty) = 0.22 \\
		\end{align*}
		Therefore the standard measures of $g$ and $f$, denoted by $\hat{g}_m$ and $\hat{f}_m$ respectively, are given by
		\begin{align*}
			\hat{g}_m &\approx 0.585 \, \delta_{0.62 \,\delta_{\safe} + 0.38 \,\delta_{\faulty}} + 0.415 \, \delta_{0.34 \,\delta_{\safe} + 0.66 \,\delta_{\faulty}}, \\
			\hat{f}_m &\approx 0.78 \, \delta_{0.62 \,\delta_{\safe} + 0.38 \,\delta_{\faulty}} + 0.22 \, \delta_{0.09 \,\delta_{\safe} + 0.91 \,\delta_{\faulty}}.
		\end{align*}
	\end{example}
	One may wonder why would one consider such a seemingly convoluted way to view a statistical experiment in terms of its standard measure.
	As mentioned at the beginning of \Cref{sec:classical_BSS}, one reason is that we can express comparison of statistical experiments in terms of the second-order dominance ordering among their standard measures.
	Explicitly, this is the classical version of the BSS Theorem.
	One of its strong points is that it reduces the comparison of experiments $\Theta \to X$ for \emph{arbitrary} $X$ to the comparison of measures on a \emph{fixed} sample space, namely $P \Theta$.
	\begin{theorem}[Blackwell--Sherman--Stein \cite{le1996comparison}]\label{thm:BSS}
		Let $\Theta$, $X$, and $Y$ be standard Borel spaces with $\Theta$ finite. 
		For any two Markov kernels $f \colon \Theta \to X$ and $g \colon \Theta \to Y$, the following are equivalent:
		\begin{enumerate}
			\item $f \succeq g$, i.e.\ $f$ is more informative about $\Theta$ than $g$ is.
			\item $\hat{f}_m \sqsubseteq \hat{g}_m$, i.e.\ the standard measure of $g$ second-order dominates the standard measure of $f$.
		\end{enumerate}
	\end{theorem}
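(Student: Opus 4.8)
The plan is to deduce \Cref{thm:BSS} from the synthetic machinery already in place, applied to $\cC=\BorelStoch$ — which is \as{}-compatibly representable (\Cref{ex:borelstoch_ascompatible}) and has conditionals (\Cref{ex:borelstoch_has_conds}) — and, since $\Theta$ plays a parameter role, to the parametric category $\BorelStoch_\Theta$, which is again \as{}-compatibly representable with conditionals by \Cref{ex:param_representable} and \Cref{lem:parametric_ascr}. The first step is a reduction: pick a strictly positive prior $m$ on the finite set $\Theta$ (for instance the uniform one); then $m$-almost-sure equality of kernels out of $\Theta$ is honest equality, since each $\theta$ carries positive mass, so the relation $f\asinf{m}g$ of \Cref{def:informativeness} collapses to the plain informativeness order $f\succeq g$. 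Hence it suffices to characterize $\asinf{m}$ in terms of the standard measures, and as a by-product the argument will produce the prior-dependent refinement for arbitrary $\Theta$; one also checks, as part of the content, that the right-hand condition does not depend on which strictly positive $m$ is chosen.

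Second, I would set up the standard experiment $\hat f=(f^{\dagger})^{\sharp}\circ f\colon\Theta\to P\Theta$ synthetically and record two facts. (i) It is sufficient for $f$ relative to $m$: there is a deterministic-up-to-recovery map $\alpha\colon P\Theta\to X$ with $f\ase{m}\alpha\circ\hat f$ (the posterior statistic is sufficient), which unwinds from the defining equation of the Bayesian inverse together with $\samp\circ\delta=\id$. (ii) The standard measure $\hat f_m:=\hat f\circ m\in\cC(I,P\Theta)$ has barycenter $m$ with respect to the free $P$-algebra $P\samp_\Theta\colon PP\Theta\to P\Theta$, i.e. $P\samp_\Theta\circ\hat f_m^{\sharp}=m$; this is the synthetic law of total expectation and again follows from the definition of Bayesian inversion. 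Since $\hat f_m$ and $\hat g_m$ then share the barycenter $m$, \Cref{prop:2ndorder} — applied with the object $P\Theta$, algebra $e=P\samp_\Theta$, and the parameter object taken to be $I$ — says that $\hat f_m\sqsubseteq\hat g_m$ is equivalent to the existence of $r\colon I\to PP\Theta$ with $\samp\circ r=\hat g_m$ and $P\samp_\Theta\circ r=\hat f_m$, equivalently of a $\hat g_m$-dilation $t\colon P\Theta\to P\Theta$ converting $\hat g_m$ to $\hat f_m$, with \Cref{lem:dilationfree} turning the dilation condition on the free algebra into a checkable diagram.

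Third, the two implications amount to translating between a garbling $c\colon X\to Y$ with $cf=g$ and such a dilation $t\colon P\Theta\to P\Theta$. From $c$ I would build $t$ as the Bayesian push-forward of $c$ along the posterior maps: informally $t$ sends the $f$-posterior $(f^{\dagger})^{\sharp}(x)$ to the law of the $g$-posterior $(g^{\dagger})^{\sharp}(y)$ with $y$ drawn from $c(-\mid x)$, realized diagrammatically from $c$, the sampling maps, and the Bayesian inverses of $f$ and $g$; mean-preservation of $t$ is exactly the tower rule for the chain $\Theta\to X\to Y$, available because $\Theta$ and $Y$ are conditionally independent given $X$ (\Cref{thm:informativeness_vs_sufficiency} organizes this conditional-independence bookkeeping), while $t\hat f\ase{m}\hat g$ follows from $cf=g$. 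Conversely, from $t$ one conjugates through the recovery maps of fact (i): with $f\ase{m}\alpha\hat f$ and the analogous $g\ase{m}\beta\hat g$, set $c$ to be the composite that reconstructs $\hat f$ from $X$ via the Bayesian inverse of $\alpha$, applies $t$, and reads off $Y$ through $\beta$, then verify $cf\ase{m}g$ from the dilation and conversion properties of $t$. The hard part will be this correspondence: making it fully precise and invertible in diagrammatic terms — in particular showing that a garbling between $f$ and $g$ carries genuinely the same data as a mean-preserving kernel between $\hat f$ and $\hat g$ — while tracking which identities hold only $m$-almost surely and using the sampling cancellation property (\Cref{prop:scp_vs_ascomp}) to slide sampling maps through the computations; the finiteness of $\Theta$ with strictly positive $m$ is what finally licenses dropping the $m$-a.s. qualifiers in the stated form of the theorem.
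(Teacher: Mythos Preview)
Your overall plan---reduce to the prior-dependent statement via a strictly positive $m$ on finite $\Theta$, pass to standard experiments, and translate between garblings and dilations---is the right architecture, and matches how the paper derives \Cref{thm:BSS} from \Cref{thm:dilation_criterion} and \Cref{thm:dilation_criterion_discrete}. The detour through the parametric category $\BorelStoch_\Theta$ is unnecessary here; that machinery is only used later for the prior-parametrized variant in \Cref{sec:blackwell3}.

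There is, however, a genuine confusion in your third step. The map you describe---``$t$ sends the $f$-posterior $(f^{\dagger})^{\sharp}(x)$ to the law of the $g$-posterior $(g^{\dagger})^{\sharp}(y)$ with $y\sim c(\,\cdot\mid x)$''---is not the dilation but the \emph{garbling at the level of standard experiments}: it satisfies $t\,\hat f \ase{m} \hat g$, which you in fact write, whereas the dilation witnessing $\hat f_m \sqsubseteq \hat g_m$ must go the other way, $t\,\hat g_m = \hat f_m$, and be mean-preserving relative to $\hat g_m$. You correctly state the target earlier (``a $\hat g_m$-dilation converting $\hat g_m$ to $\hat f_m$'') but then build the wrong map. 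Since $c$ only goes $X\to Y$, there is no direct ``push-forward'' construction in the required direction; one has to invert.

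The paper's resolution is short and worth internalizing: the garbling and the dilation are \emph{Bayesian inverses of each other}. The key lemma (\Cref{lem:stnd_enc}) is that $\samp_\Theta$ is a Bayesian inverse of $\hat f$ with respect to $m$; your ``barycenter'' observation is its marginal, but the full joint statement is what drives the argument. Given a garbling $c\colon P\Theta\to P\Theta$ with $c\,\hat f \ase{m} \hat g$ (whose existence is equivalent to $f\asinf{m} g$ by \Cref{prop:standard_experiment}), its Bayesian inverse $c^\dagger$ with respect to $\hat f_m$ automatically satisfies $c^\dagger\,\hat g_m = \hat f_m$, and two applications of \Cref{lem:stnd_enc} show it is a $\hat g_m$-dilation. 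Conversely, the Bayesian inverse $t^\dagger$ of a dilation $t$ is the garbling. This replaces your appeal to \Cref{thm:informativeness_vs_sufficiency} and the explicit ``conjugation through recovery maps'' by a single symmetric mechanism, and makes the proof essentially a three-line diagram chase in each direction.
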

	\begin{example}\label{ex:rod_dilation}
		In terms of our running example of metal rods, we saw that $f$ is more informative than $g$ in \Cref{ex:rod} and constructed the standard measures in \Cref{ex:rod_stndmeas}.
		\Cref{thm:BSS} says that there should be a $\hat{g}_m$-dilation $t \colon P \Theta \to P\Theta$ that maps $\hat{g}_m$ to $\hat{f}_m$, meaning that the posteriors according to the oscillation test $f$ are ``more spread out'' than those corresponding to the threshold test $g$.
		Concretely, a dilation that does the job can be chosen to be any measurable map $t^\sharp$ satisfying
		\begin{align*}
			0.62 \,\delta_{\safe} + 0.38 \,\delta_{\faulty} &\mapsto \delta_{0.62 \,\delta_{\safe} + 0.38 \,\delta_{\faulty}} \\[4pt]
			0.34 \,\delta_{\safe} + 0.66 \,\delta_{\faulty} &\mapsto 0.47 \, \delta_{0.62 \,\delta_{\safe} + 0.38 \,\delta_{\faulty}} + 0.53 \, \delta_{0.09 \,\delta_{\safe} + 0.91 \,\delta_{\faulty}}
		\end{align*}
		up to our convention of rounding to two decimal places.
		One can use \Cref{lem:dilationfree} to convince oneself that these relations indeed give a $\hat{g}_m$-dilation $t$, since we have
		\begin{align*}
			0.34 &\approx 0.47 * 0.62 + 0.53 * 0.09,  &  0.66 &\approx 0.47 * 0.38 + 0.53 * 0.91.
		\end{align*}
	\end{example}
	
\subsection{The Blackwell--Sherman--Stein Theorem in Markov Categories}\label{sec:BSS}

	Our goal is now to state and prove a version of \Cref{thm:BSS} for Markov categories.
	In order to arrive at such a synthetic generalization, we need a bit more than just a representable Markov category. 
	Indeed, the requirements are the same as in \Cref{prop:2ndorder}.

     \begin{assumption}
		\label{ass:ascr}
		Throughout the rest of \Cref{sec:blackwell}, let $\cC$ be an \as{}-compatibly representable Markov category with conditionals, as well as $f \colon \Theta \to X$ and $g \colon \Theta \to Y$ arbitrary morphisms in $\cC$ with the same domain.
	\end{assumption}

	We mention this assumption again in the statements of our results, but otherwise leave it implicit.
	
	The existence of conditionals is necessary in order to have an abstract notion of Bayesian inference, which is used to construct the basic elements of the BSS Theorem: standard experiments and standard measures.
	The representability is relevant again for the definition of the standard experiment and standard measure, which make reference to the distribution functor $P$.
	The \as{}-compatibility, in the sense of \Cref{def:adjunction_as_compatible}, is relevant for proving \as{} uniqueness of the standard experiment, and for the interpretation of the second condition of the upcoming \Cref{thm:dilation_criterion} as a second-order dominance relation via \Cref{lem:dilationfree}.
	
	Next, we present the definitions of standard experiment and standard measure in the language of Markov categories.
	\begin{definition}\label{def:stn_exp}
		Given morphisms $m \colon I \to\ \Theta$ and $f \colon \Theta \to X$ in a Markov category $\cC$ with conditionals, the \emph{standard experiment} $\hat{f} \colon \Theta \to P \Theta$ of $f$ is given by
		\begin{equation}\label{eq:stnd_exp}
			\tikzfig{stndexp}
		\end{equation}
		where $f^{\dagger} \colon X \to \Theta$ is a Bayesian inverse of $f$ with respect to $m$.
	\end{definition}
	Standard experiments are unique, $m$-almost surely, as long as the Markov category is \as{}-compatibly representable and has conditionals.
	To see this, consider two Bayesian inverses of $f$ with respect to $m$, $f_1^\dagger$ and $f_2^\dagger$, which thus have to satisfy
	\begin{equation}
		\tikzfig{fm_bayesian_inverse}
	\end{equation}
	by the definition of Bayesian inverses.
	Applying the sampling cancellation property and the causality property~\cite[Definition~11.30]{fritz2019synthetic} which follows from the existence of conditionals~\cite[Proposition~11.33]{fritz2019synthetic} gives
	\begin{equation}
		\tikzfig{fm_bayesian_inverse2}
	\end{equation}
	which is exactly the equation needed to conclude that $\hat{f}$ is well-defined up to $\as{m}$ equality.
	\begin{definition}\label{def:stnd_meas}
		The \emph{standard measure} $\hat{f}_m \colon I \to P \Theta$ of $f$ is then defined by
		\begin{equation}\label{eq:stnd_meas}
			\tikzfig{stndmeas}
		\end{equation}
	\end{definition}

	Per the $m$-almost sure uniqueness of $\hat{f}$, the standard measure $\hat{f}_m$ is unique as soon as $\cC$ is \as{}-compatibly representable.

	Having introduced the basic necessary ingredients of the theory of comparison of statistical experiments and the BSS Theorem in the language of Markov categories, we now present the results that build up to the BSS Theorem itself, assuming throughout that we are in an \as{}-compatibly representable Markov category with conditionals.
	
	\begin{lemma}\label{lem:stnd_enc}
		Let $f \colon \Theta \to X$ be a morphism with standard experiment $\hat{f} \colon \Theta \to P \Theta$ as defined above.
		Then the sampling map $\samp_{\Theta} \colon P \Theta \to \Theta$ is a Bayesian inverse of $\hat{f}$ with respect to $m$, i.e.~we have
		\begin{equation}
			\tikzfig{blackwelllemma1}
		\end{equation}
	\end{lemma}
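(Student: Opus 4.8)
The plan is to unfold the definition $\hat{f} = (f^{\dagger})^{\sharp} \circ f$ of the standard experiment on both sides of the asserted equation and reduce everything to the defining equation~\eqref{eq:bayesian_inverse} of the Bayesian inverse $f^{\dagger}$ of $f$ with respect to $m$. By Definition~\ref{def:Bayesian_inverse}, the claim that $\samp_{\Theta}$ is a Bayesian inverse of $\hat{f}$ with respect to $m$ is the equation between the morphism $I \to \Theta \otimes P\Theta$ obtained by copying $m$ and feeding one copy through $\hat{f}$, and the one obtained by first forming $\hat{f}_m = \hat{f} \circ m$, then copying its output and feeding one copy through $\samp_{\Theta}$.

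First I would rewrite the right-hand side. Substituting $\hat{f} = (f^{\dagger})^{\sharp} \circ f$ and using that $(f^{\dagger})^{\sharp}$ is deterministic, hence commutes past the copy map $\cop_{P\Theta}$, the right-hand side becomes $f \circ m$ followed by $\cop_{X}$ and then $\big((\samp_{\Theta} \circ (f^{\dagger})^{\sharp}) \otimes (f^{\dagger})^{\sharp}\big)$ on the two copies of $X$; since $\samp_{\Theta} \circ (f^{\dagger})^{\sharp} = f^{\dagger}$, this simplifies to $f \circ m$, then $\cop_{X}$, then $f^{\dagger} \otimes (f^{\dagger})^{\sharp}$.

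Next I would treat the left-hand side. Substituting $\hat{f} = (f^{\dagger})^{\sharp} \circ f$ leaves $(f^{\dagger})^{\sharp}$ applied to one output of the morphism $I \to \Theta \otimes X$ given by copying $m$ and feeding one copy through $f$. Equation~\eqref{eq:bayesian_inverse} rewrites exactly this morphism as $f \circ m$ followed by $\cop_{X}$ and then $f^{\dagger}$ on one of the copies of $X$; applying the remaining $(f^{\dagger})^{\sharp}$ to the other copy turns the left-hand side into $f \circ m$, then $\cop_{X}$, then $f^{\dagger} \otimes (f^{\dagger})^{\sharp}$ --- the same expression obtained for the right-hand side. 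Hence the two sides agree.

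I expect the only delicate part to be the diagrammatic bookkeeping: tracking which leg of each copy map carries which wire, and invoking naturality of $\cop$ with respect to the deterministic morphism $(f^{\dagger})^{\sharp}$ at the correct place. Beyond the existence of $f^{\dagger}$ itself, no appeal to \as{}-compatible representability or to the existence of conditionals is needed, and the resulting identity holds strictly, not merely $m$-almost surely.
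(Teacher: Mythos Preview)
Your proposal is correct and follows essentially the same route as the paper: both arguments hinge on exactly the two facts that $(f^{\dagger})^{\sharp}$ is deterministic (hence commutes with the copy map) and that $\samp_{\Theta} \circ (f^{\dagger})^{\sharp} = f^{\dagger}$, together with one use of the defining equation~\eqref{eq:bayesian_inverse} for $f^{\dagger}$. The paper presents this as a single chain of string-diagram equalities, while you separate the right-hand and left-hand side manipulations, but the content is identical.
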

	\begin{proof}
		Since, by definition, $(f^\dag)^{\sharp}$ is deterministic and satisfies \mbox{$\samp \circ (f^\dag)^{\sharp} = f^\dag$}, we can prove the lemma as follows:
		\begin{equation}
			\tikzfig{blackwelllemma2}
		\end{equation}
		
		\vspace*{-\baselineskip}
	\end{proof}
	
	The terminology ``standard experiment'' of $\hat{f}$ is then justified by the following result, which states that $\hat{f}$ is exactly as informative about $\Theta$ as the original experiment $f$ is, at least up to \as{$m$} equality.
	\begin{proposition}\label{prop:standard_experiment}
		For any $f$, we have $f \succeq \hat{f}$ and $\hat{f} \asinf{m} f$.
	\end{proposition}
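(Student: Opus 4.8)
The plan is to handle the two claims separately: $f \succeq \hat f$ is essentially a tautology, and $\hat f \asinf{m} f$ is where the content lies.

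For $f \succeq \hat f$: by \Cref{def:stn_exp} the standard experiment is literally the composite $\hat f = (f^{\dagger})^{\sharp} \circ f$, where $f^{\dagger} \colon X \to \Theta$ is a Bayesian inverse of $f$ with respect to $m$ and $(f^{\dagger})^{\sharp} \colon X \to P\Theta$ is deterministic. Hence $c \coloneqq (f^{\dagger})^{\sharp}$ is a garbling map with $c\,f = \hat f$ on the nose, so $f \succeq \hat f$ with no almost-sure qualification needed.

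For $\hat f \asinf{m} f$, I would invoke \Cref{thm:informativeness_vs_sufficiency} with the two experiments interchanged: put $\hat f \colon \Theta \to P\Theta$ in the slot of the ``more informative'' morphism and $f \colon \Theta \to X$ in the other slot, so that the first condition of that theorem reads exactly $\hat f \asinf{m} f$. It then suffices to verify the conditional-independence condition~\ref{it:cond_indepenedence}. As the witnessing state I would take $\mu \colon I \to \Theta \otimes P\Theta \otimes X$ built from $m$ by copying $\Theta$, applying $f$ on one copy to obtain an $X$, then copying that $X$ and applying the deterministic map $(f^{\dagger})^{\sharp}$ to one of the copies to produce the $P\Theta$-leg (and permuting factors into the right order). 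The marginal requirements are then immediate: the $\Theta$-marginal of $\mu$ is $m$, its $(\Theta, P\Theta)$-marginal is $m$ followed by $(f^{\dagger})^{\sharp}\circ f = \hat f$, and its $(\Theta, X)$-marginal is $m$ followed by $f$. The one substantive point is that $\mu$ displays the conditional independence of $\Theta$ and $X$ given $P\Theta$. Here the argument is: since the $(\Theta, X)$-marginal of $\mu$ is $m$ followed by $f$, the defining equation of the Bayesian inverse exhibits $f^{\dagger}$ as a conditional of $\Theta$ given $X$ in $\mu$; but $(f^{\dagger})^{\sharp}$ is deterministic with $\samp_{\Theta}\circ(f^{\dagger})^{\sharp} = f^{\dagger}$, and in $\mu$ the $P\Theta$-leg is precisely the deterministic image $(f^{\dagger})^{\sharp}$ of the $X$-leg, so this conditional factors through the $P\Theta$-leg via $\samp_{\Theta}$. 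Re-expressing the $(X, P\Theta)$-part of $\mu$ through a conditional of $X$ given $P\Theta$ and substituting this factorization puts $\mu$ into the shape ``generate $P\Theta$, then generate $\Theta$ and $X$ independently from it''; this is exactly the conditional independence required in condition~\ref{it:cond_indepenedence}, and the conditional of $\Theta$ given $P\Theta$ that drops out is $\samp_{\Theta}$, consistently with \Cref{lem:stnd_enc}.

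The main obstacle is precisely this conditional-independence step — everything else is bookkeeping with marginals and the definition of $\hat f$. Its conceptual content is that the posterior-valued statistic $(f^{\dagger})^{\sharp}\colon X \to P\Theta$ is sufficient for $f$ with prior $m$, i.e.\ the posterior literally \emph{is} the conditional of $\Theta$ given the data, and the deterministic factorization $f^{\dagger}=\samp_{\Theta}\circ(f^{\dagger})^{\sharp}$ is what lets one say this synthetically. One must check that all the conditional manipulations remain within the scope of \Cref{ass:ascr}; this also opens an alternative route, in which one instead directly produces the garbling $c' \colon P\Theta \to X$ as a Bayesian inverse of $(f^{\dagger})^{\sharp}$ with respect to $f\circ m$ and then deduces $c'\,\hat f \ase{m} f$ from the same factorization together with the sampling cancellation property of \Cref{prop:scp_vs_ascomp}.
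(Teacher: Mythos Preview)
Your proposal is correct. The first claim $f\succeq\hat f$ is handled identically to the paper. For $\hat f\asinf{m} f$, however, your main route differs from the paper's: the paper does \emph{not} invoke \Cref{thm:informativeness_vs_sufficiency} at all but instead goes directly via your ``alternative route'', defining the garbling $r\colon P\Theta\to X$ as a Bayesian inverse of $(f^\dagger)^\sharp$ with respect to $f\,m$, applying $\samp_\Theta\otimes\id_X$ to the defining Bayesian-inverse equation for $r$, and then using \Cref{lem:stnd_enc} to simplify the right-hand side to the display witnessing $r\,\hat f\ase{m} f$. So what you relegate to a closing remark is precisely the paper's argument.

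Your main route via \Cref{thm:informativeness_vs_sufficiency} is a legitimate alternative and highlights the conceptual point that the posterior map $(f^\dagger)^\sharp$ is a sufficient statistic; the conditional-independence verification you sketch does go through (rewrite $\mu$ using the Bayesian inverse of $f$, pull the two applications of $(f^\dagger)^\sharp$ past a copy using its determinism, then Bayes-invert $(f^\dagger)^\sharp$ to obtain the $P\Theta$-centered factorization). Note, though, that this detour still requires constructing a Bayesian inverse of $(f^\dagger)^\sharp$ with respect to $f\,m$---the same morphism the paper writes down directly---and the conditional of $\Theta$ given $P\Theta$ that falls out is $\samp_\Theta$, which is the content of \Cref{lem:stnd_enc}. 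So the two routes use the same ingredients; the paper's is shorter, while yours packages the sufficiency interpretation more explicitly.
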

	In standard measure-theoretic probability, this is \cite[Proposition~7.2.2]{torgersen1991comparison}.
	\begin{proof}
		Since $f \succeq \hat{f}$ is clear from the definition of the standard experiment, the crux of the proof lies in showing that there exists a morphism $r \colon P\Theta \to X$ such that $r \, \hat{f} \ase{m} f$.
		That is, $r$ is a garbling map which recovers $f$ from its standard version.
		We now show that choosing $r$ to be a Bayesian inverse of $(f^\dag)^{\sharp}$ with respect to $f \, m$ does the job.
		With this choice, we thus have
		\begin{equation}\label{eq:recovery_def}
			\tikzfig{recoverydef}
		\end{equation}
		by the definition of Bayesian inverses.
		
		Applying $\samp_\Theta \tensor \id_X$ to this equation yields
		\begin{equation}\label{eq:recovery_lhs}
			\tikzfig{recoverylhs}
		\end{equation}
		for its left-hand side and
		\begin{equation}\label{eq:recovery_rhs}
			\tikzfig{recoveryrhs}
		\end{equation}
		for its right hand side, where we use \Cref{lem:stnd_enc} to obtain the latter.
		Consequently, $f$ is $m$-almost surely equal to $r \, \hat{f}$, as we wanted to show.
	\end{proof}
	
	Now we have all the necessary ingredients to present a proof of our version of the Blackwell---Sherman---Stein Theorem in representable Markov categories.
	\begin{theorem}[Blackwell---Sherman---Stein]\label{thm:dilation_criterion}
		Let $\cC$ be an \as{}-compatibly representable Markov category with conditionals.
		Consider two morphisms $f \colon \Theta \to X$ and $g \colon \Theta \to Y$ in $\cC$, whose standard experiments are denoted by $\hat{f}$ and $\hat{g}$ respectively.
		
		Then the following are equivalent:
		\begin{enumerate}
			\item $f \asinf{m} g$, i.e.\ there exists a morphism $c \colon X \to Y$ such that
				\begin{equation}\label{eq:data_processing}
					c \, f \ase{m} g.
				\end{equation}
			\item $\hat{f}_m \sqsubseteq \hat{g}_m$, i.e.\ there exists a $\hat{g}_m$-dilation $t \colon P\Theta \to P\Theta$ such that 
			\begin{equation}\label{eq:posterior_dilation}
				\hat{f}_m = t \, \hat{g}_m.
			\end{equation}
		\end{enumerate}
	\end{theorem}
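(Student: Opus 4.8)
The plan is to prove each implication by writing down an explicit morphism and verifying two properties of it, using Bayesian inversion (available since $\cC$ has conditionals) to reorganise the relevant joint distributions, and using \Cref{prop:2ndorder} together with \Cref{lem:dilationfree} to pass between the ``dilation'' description of $\hat{f}_m \sqsubseteq \hat{g}_m$ and the equivalent description by a morphism $r \colon I \to PP\Theta$. The standing tools will be: \Cref{lem:stnd_enc} (the sampling map $\samp_\Theta$ is a Bayesian inverse of a standard experiment with respect to $m$); the recovery map $r_g \colon P\Theta \to Y$ from \Cref{prop:standard_experiment}, i.e.\ a Bayesian inverse of $(g^\dagger)^\sharp$ with respect to $g\,m$, for which $r_g\,\hat{g}_m = g\,m$; the chain rule for Bayesian inversion, namely that $f^\dagger \circ c^\dagger$ is a Bayesian inverse of $c\,f$ with respect to $m$ whenever $c^\dagger$ is one of $c$ with respect to $f\,m$ \cite{fritz2019synthetic}; and \as{}-compatible representability, which (via the sampling cancellation property, \Cref{prop:scp_vs_ascomp}) lets us transport $\ase{p}$-equalities across $(\ph)^\sharp$.

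For the implication from (1) to (2), assume $c\,f \ase{m} g$ and put $\mu_X := f\,m$, $\mu_Y := g\,m$, so that $c\,\mu_X = \mu_Y$, $c^\dagger \mu_Y = \mu_X$, and $f^\dagger \circ c^\dagger \ase{\mu_Y} g^\dagger$ by the chain rule and the \as{}-uniqueness of Bayesian inverses. I would define
\[
	t \;:=\; (f^\dagger)^\sharp \circ c^\dagger \circ r_g \;\colon\; P\Theta \longrightarrow P\Theta ,
\]
which satisfies $t\,\hat{g}_m = (f^\dagger)^\sharp c^\dagger r_g\,\hat{g}_m = (f^\dagger)^\sharp c^\dagger \mu_Y = (f^\dagger)^\sharp \mu_X = \hat{f}_m$. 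To see that $t$ is a $\hat{g}_m$-dilation I would not verify \eqref{eq:dilationgeneral} directly but instead produce the morphism whose existence is condition~(i) of \Cref{prop:2ndorder}, namely $r := P\!\left((f^\dagger)^\sharp\right) \circ (c^\dagger)^\sharp \circ r_g \circ \hat{g}_m \colon I \to PP\Theta$: one has $\samp_{P\Theta}\circ r = t\,\hat{g}_m = \hat{f}_m$, while $P\samp_\Theta \circ r = (f^\dagger \circ c^\dagger)^\sharp \circ r_g \circ \hat{g}_m$ (using $\samp_\Theta \circ (f^\dagger)^\sharp = f^\dagger$ and functoriality of $(\ph)^\sharp$), and this equals $(g^\dagger)^\sharp \circ r_g \circ \hat{g}_m = (g^\dagger)^\sharp\,\mu_Y = \hat{g}_m$ by the chain rule, \as{}-compatible representability, and $r_g\,\hat{g}_m = \mu_Y$. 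This establishes condition~(i) of \Cref{prop:2ndorder} with $p = \hat{f}_m$, $q = \hat{g}_m$ and $e = P\samp_\Theta$, hence $\hat{f}_m \sqsubseteq \hat{g}_m$.

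For the implication from (2) to (1), let $t$ be a $\hat{g}_m$-dilation with $t\,\hat{g}_m = \hat{f}_m$, let $t^\dagger$ be a Bayesian inverse of $t$ with respect to $\hat{g}_m$, and define
\[
	c \;:=\; r_g \circ t^\dagger \circ (f^\dagger)^\sharp \;\colon\; X \longrightarrow Y .
\]
The verification that $c\,f \ase{m} g$ is a chain of wire-bendings. Precomposing with $m$, the joint of $(\theta, c\,f(\theta))$ is ``sample $x \sim \mu_X$, then $\theta \sim f^\dagger(x)$, set $\phi := (f^\dagger)^\sharp(x)$, then $\psi \sim t^\dagger(\phi)$, then $y \sim r_g(\psi)$''; by \Cref{lem:stnd_enc} the block ``$x\sim\mu_X$, $\theta\sim f^\dagger(x)$, $\phi=(f^\dagger)^\sharp(x)$'' may be replaced by ``$\phi\sim\hat{f}_m$, $\theta\sim\samp_\Theta(\phi)$'', after which $\theta$ and $\psi$ are conditionally independent given $\phi$; rewriting the $(\phi,\psi)$-block through $t^\dagger$ turns it into ``$\psi\sim\hat{g}_m$, $\phi\sim t(\psi)$''; the mean-preserving property of $t$ in the free-algebra form of \Cref{lem:dilationfree} (i.e.\ $P\samp_\Theta\circ t^\sharp \ase{\hat{g}_m}\id$) collapses ``$\phi\sim t(\psi)$, $\theta\sim\samp_\Theta(\phi)$'' to ``$\theta\sim\samp_\Theta(\psi)$''; and finally the Bayesian inverse property of $r_g$ rewrites ``$\psi\sim\hat{g}_m$, $y\sim r_g(\psi)$'' as ``$y\sim\mu_Y$, $\psi=(g^\dagger)^\sharp(y)$'', leaving the joint ``$y\sim\mu_Y$, $\theta\sim g^\dagger(y)$'', which is exactly Bayes' rule applied to ``$\theta\sim m$, $y\sim g(\theta)$''. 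Hence $c\,f\ase{m}g$.

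I expect the main obstacle to be the diagrammatic bookkeeping in the second implication: each individual move is legitimate in a Markov category with conditionals, but chaining them requires care that the conditional independence $\theta \perp \psi \mid \phi$ is preserved under the $t^\dagger$-rewrite, and that each $\ase{}$-equation is only composed against a state it dominates (here automatic, since the relevant states are $\mu_X$, $\mu_Y$, $\hat{f}_m$, $\hat{g}_m$ themselves). It is probably cleanest to carry out (1)$\Rightarrow$(2) first, extract its sub-identities (e.g.\ $r_g\,\hat{g}_m = \mu_Y$ and $(f^\dagger\circ c^\dagger)^\sharp \ase{\mu_Y}(g^\dagger)^\sharp$), and then run (2)$\Rightarrow$(1) essentially as the reverse computation.
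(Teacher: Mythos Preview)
Your argument is correct, but it takes a genuinely different route from the paper's. The paper works almost entirely at the level of the \emph{standard} experiments: it first invokes \Cref{prop:standard_experiment} to translate $f \asinf{m} g$ into the existence of a garbling $c \colon P\Theta \to P\Theta$ with $c\,\hat{f} \ase{m} \hat{g}$, and then observes that such a $c$ and a $\hat{g}_m$-dilation $t$ are simply Bayesian inverses of one another with respect to the standard measures. A single string-diagram computation using \Cref{lem:stnd_enc} twice (once for $\hat{f}$ and once for $\hat{g}$) verifies the dilation property of $c^\dagger$; the converse direction is literally the same computation read backwards with $t$ in place of $c^\dagger$, showing $t^\dagger$ garbles $\hat{f}$ to $\hat{g}$. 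Your approach instead stays closer to the original experiments $f,g$: you build $t$ (resp.\ $c$) as explicit composites of $(f^\dagger)^\sharp$, the recovery map $r_g$, and a Bayesian inverse of the given $c$ (resp.\ $t$), and you certify $\hat{f}_m \sqsubseteq \hat{g}_m$ via the partial-evaluation characterization of \Cref{prop:2ndorder} rather than directly. What the paper's route buys is brevity and symmetry---one short calculation serves both directions. What yours buys is a concrete formula for the garbling $X \to Y$ in terms of the dilation without first reducing to $P\Theta$-valued experiments, and it illustrates that \Cref{prop:2ndorder} can serve as an alternative dilation certificate. One small incoherence: in your forward direction you define a specific $t$ and verify $t\,\hat{g}_m = \hat{f}_m$, but then establish the dilation property by producing the $r$ of \Cref{prop:2ndorder}\ref{it:pev}, which yields \emph{some} dilation rather than your $t$; this is harmless for the theorem, but you could either drop the explicit $t$ or check its dilation property directly.
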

	\begin{proof}
		For the forward implication, let $c \colon P\Theta \to P\Theta$ be a morphism satisfying $c \, \hat{f} \ase{m} \hat{g}$, the existence of which is equivalent to $f \asinf{m} g$ by \Cref{prop:standard_experiment}.
		We then prove that $\hat{f}_m \sqsubseteq \hat{g}_m$ holds as well by constructing a $\hat{g}_m$-dilation that witnesses this second-order dominance relation. 
		Let $c^{\dagger}$ denote the Bayesian inverse of $c$ with respect to $\hat{f}_m$.
		Then $c^{\dagger}$ satisfies \cref{eq:posterior_dilation} by definition.
		In order to show that $c^\dag$ is a $\hat{g}_m$-dilation, we can use \Cref{lem:stnd_enc} twice:
		\begin{equation}\label{eq:blackwellthm1}
			\tikzfig{blackwellthm1_v2}
		\end{equation}
		Overall, $c^{\dagger}$ is thus a $\hat{g}_m$-dilation that maps $\hat{g}_m$ to $\hat{f}_m$, as was to be shown.
		
		For the converse implication, suppose that a $\hat{g}_m$-dilation $t$ with $\hat{f}_m = t \, \hat{g}_m$ exists, and denote its Bayesian inverse with respect to $\hat{g}_m$ by $t^{\dagger}$.
		We can now use the same steps as in the computation above in order to show that $t^{\dagger}$ achieves the conversion of $\hat{f}$ into $\hat{g}$, at least $m$-almost surely:
		\begin{equation}\label{eq:blackwellthm2}
			\tikzfig{blackwellthm2}
		\end{equation}
		We have thus shown that $t^\dagger$ achieves the conversion $\hat{f} \asinf{m} \hat{g}$ and consequently we get $f \asinf{m} g$ by \Cref{prop:standard_experiment}. 
	\end{proof}
	
	Traditionally, the BSS Theorem is stated with exact equalities rather then almost surely with respect some measure $m \in \cC_{\det}(I,P\Theta)$.
	This is because the theorem is usually applied to finite (or countably infinite) parameter sets $\Theta$, for which there exists a distribution $m \colon I \to \Theta$ having full support, so that $m$-almost sure equality coincides with plain equality,
	\begin{equation}\label{eq:full_support}
		f \ase{m} g  \quad\Longleftrightarrow\quad  f = g
	\end{equation}
	 for all $f,g \colon \Theta \to A$ to any other object $A$.
	 Thanks to this property, one can then remove all \as{$m$} qualifications. This generalizes as follows.
	\begin{definition}\label{def:discrete}
		An object $\Theta$ of a Markov category is termed \emph{discrete} if there exists a morphism $m \colon I \to \Theta$ such that \eqref{eq:full_support} holds for all $f,g \colon \Theta \to A$ to any $A$.
	\end{definition}
	Clearly such an $m$ satisfies $m \gg \mu$ for every other $\mu \colon I \to \Theta$ in the sense of \Cref{def:domination}.
	In $\BorelStoch$, the only standard Borel spaces $\Theta$ for which such an $m$ exists are the discrete measurable spaces, meaning that $\Theta$ must be finite or countably infinite.
	Then one can choose $m$ to be any distribution of full support on $\Theta$, where in the case of finite $\Theta$ the uniform distribution is the commonly used choice. 
	
	For discrete objects, we get the following version of the BSS Theorem, which is closer to the traditional account than \Cref{thm:dilation_criterion}.
	\begin{corollary}\label{thm:dilation_criterion_discrete}
		Let $\cC$ be an \as{}-compatibly representable Markov category with conditionals, and let $\Theta$ be a discrete object of $\cC$ with respect to $m \colon I \to \Theta$.
		Consider two morphisms $f \colon \Theta \to X$ and $g \colon \Theta \to Y$ in $\cC$, whose standard experiments are denoted by $\hat{f}$ and $\hat{g}$ respectively.
		
		Then the following are equivalent:
		\begin{enumerate}
			\item $f\succeq g$, i.e.\ there exists a morphism $c \colon X \to Y$ such that
				\begin{equation}
					c \, f = g.
				\end{equation}
			\item $\hat{f}_m \sqsubseteq \hat{g}_m$, i.e.\ there exists a $\hat{g}_m$-dilation $t \colon P\Theta \to P\Theta$ such that 
			\begin{equation}
				\hat{f}_m = t \, \hat{g}_m.
			\end{equation}
		\end{enumerate}
	\end{corollary}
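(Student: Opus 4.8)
The plan is to read this off \Cref{thm:dilation_criterion} by exploiting the defining property of a discrete object, which makes $m$-almost-sure equality of morphisms out of $\Theta$ coincide with strict equality.

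First I would record the consequence of \Cref{def:discrete}: because $\Theta$ is discrete with respect to $m \colon I \to \Theta$, for every object $A$ and every pair of parallel morphisms $u,v \colon \Theta \to A$ one has $u \ase{m} v$ if and only if $u = v$. Applying this with $A = Y$, $u = c\,f$ and $v = g$ for an arbitrary $c \colon X \to Y$ shows that $c\,f \ase{m} g$ is equivalent to $c\,f = g$; hence the relaxed relation $f \asinf{m} g$ of \Cref{def:informativeness} is, for such a $\Theta$, nothing but the strict informativeness relation $f \succeq g$. This identifies condition (a) of the corollary with condition (a) of \Cref{thm:dilation_criterion}.

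Next I would observe that condition (b) is literally identical in the two statements---the existence of a $\hat{g}_m$-dilation $t \colon P\Theta \to P\Theta$ with $\hat{f}_m = t\,\hat{g}_m$---and that the standing hypotheses (\as{}-compatible representability and existence of conditionals) are precisely those under which \Cref{thm:dilation_criterion} was proved, so that $\hat f$, $\hat g$, $\hat f_m$, $\hat g_m$ are all available and well-defined. Invoking \Cref{thm:dilation_criterion} for the data $(m,f,g)$ then yields the claimed equivalence. There is no serious obstacle here: the only point that requires (minimal) care is to check that the sole occurrence of $m$ on the informativeness side is the garbling equation $c\,f \ase{m} g$, which discreteness of $\Theta$ disposes of, whereas on the dominance side $\hat f_m$ and $\hat g_m$ are genuine global elements and $\hat f_m = t\,\hat g_m$ is already an exact equation, while the dilation condition itself is phrased $\hat g_m$-almost surely and neither needs nor admits further simplification.
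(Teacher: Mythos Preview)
Your argument is correct and matches the paper's approach: the corollary is stated without explicit proof precisely because discreteness collapses $m$-almost-sure equality to strict equality in condition (a), while condition (b) is unchanged, so \Cref{thm:dilation_criterion} applies verbatim.
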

	
	As far as we know, there is no established genuine generalization of the BSS Theorem beyond discrete parameter sets $\Theta$. 
	In \Cref{thm:dilation_criterion}, we have given a version of the theorem that goes beyond discrete parameters by virtue of using \as{} equality with respect to a measure $m$.
	In the specific case of $\cat{BorelStoch}$, the possibility of doing this has been known, see for example \cite{le1996comparison}.
	However, we believe that our synthetic treatment exposes the key reasons for this to be the case and can therefore catalyze further development.
	
	Arguably, the more interesting aspect of our \Cref{thm:dilation_criterion} is the fact that it applies in a much wider context than the traditional measure-theoretic one.
	In particular, we have shown that the result can be used in any Markov category that
	\begin{itemize}
		\item allows one to perform Bayesian inference---i.e.\ one that has conditionals, and
		\item can describe spaces of measures internally---i.e.\ one that is \as{}-compatibly representable.
	\end{itemize}
	So far, we have not even begun to explore the full scope of this type of result. 
	We explore one particular application in the next subsection.

\subsection{The Blackwell--Sherman--Stein Theorem Parametrized by Priors}\label{sec:blackwell3}

	In this subsection, we show that there is a (synthetic) version of the BSS Theorem which
	\begin{itemize}
		\item holds for an arbitrary hypothesis object $\Theta$, and
		\item does not refer to any particular prior $m$, 
	\end{itemize}
	in contrast to \Cref{thm:dilation_criterion} and \Cref{thm:dilation_criterion_discrete}, which only satisfy one of these requirements each. 
	The catch is that, in general, it characterizes a slightly different informativeness ordering, and can be thought of as a version of \Cref{thm:dilation_criterion} which is uniform in the prior.
	Interestingly, this result actually arises as a \emph{special case} of \Cref{thm:dilation_criterion}, namely when the latter is instantiated in a parametric Markov category as introduced in \Cref{sec:parametric}.

	As before, we think of $f$ and $g$ as statistical experiments with hypothesis space or parameter space $\Theta$.
	Given the corresponding distribution object $P\Theta$ associated to the hypothesis space $\Theta$, we then consider the parametric Markov category $\cC_{P \Theta}$.
	Intuitively, we think of morphisms in $\cC_{P \Theta}$ as Markov kernels parametrized by a prior over the hypothesis space $\Theta$. 
	Throughout this section, we use \as{} equality in $\cC_{P \Theta}$ with respect to the global element $\param{\prsamp \in \cC_{P \Theta}(I,\Theta)}$ represented by the sampling map:
	\begin{equation}\label{eq:param_full_support}
		\tikzfig{param_full_support}
	\end{equation}
	We think of this morphism as sampling a hypothesis in $\Theta$ distributed in accordance with the prior (which is not fixed here, but rather an extra parameter).
	
	Before we discuss the BSS Theorem instantiated in $\cC_{P \Theta}$, we consider how the basic notions of comparison of statistical experiments look in $\cC_{P \Theta}$ when we reexpress them in terms of the 
		corresponding morphisms in $\cC$.
	
	First of all, the statistical models $\Theta \to X$ are thought of as models of the behavior of a system as a function of a model parameter $\Theta$, typically not under the experimenter's control.
	The point of the theory of statistical experiments is to formalize and quantify the procedure of making inferences about an \emph{unknown} parameter $\Theta$ by virtue of learning the value of $X$.
	We thus still assume that the statistical models in $\cC_{P \Theta}$ are morphisms independent of the prior $P \Theta$ and are therefore represented by
	\begin{equation}\label{eq:param_model}
		\tikzfig{param_model}
	\end{equation}
	For better readability in future expressions in the $\cC$-representation, we introduce a new notation for the \emph{prior behavior} $\mathfrak{f} \colon P \Theta \to \Theta \otimes X$ of a statistical model $\param{f}$ as follows
	\begin{equation}\label{eq:param_parallel}
		\tikzfig{param_parallel}
	\end{equation}
	The morphism $\mathfrak{f}$ describes what a Bayesian experimenter expects to observe: 
	For each prior distribution over hypotheses, it returns as outputs an experiment outcome, for a hypothesis randomly sampled from the prior, together with that hypothesis.

	The process of Bayesian updating itself is then described by
	\begin{equation*}
		\bigl(\mathfrak{f}_{|X}\bigr)^\sharp \colon X \otimes P\Theta \longrightarrow P\Theta,
	\end{equation*}
	where $\mathfrak{f}_{|X}$ denotes a conditional of $\mathfrak{f}$ with respect to $X$.
	The deterministic morphism $(\mathfrak{f}_{|X})^\sharp$ takes an outcome and a prior as input and returns the associated Bayesian posterior. 
	Since conditionals are not unique in general, this Bayesian updating map is not uniquely determined by $f$ itself.
	As before, Bayesian updating features in the upcoming definition of the standard experiment.
	
	But let us consider the comparison of statistical experiments in $\cC_{P\Theta}$ first.
	Interpreting \Cref{def:informativeness} in $\cC_{P\Theta}$, the $\param{\as{\prsamp}}$ informativeness ordering of two prior-independent morphisms as in \cref{eq:param_model} says that $\param{f \asinf{\prsamp} g}$ if and only if there exists $c \colon X \tensor P\Theta \to Y$ such that we have
	\begin{equation}\label{eq:parametrized_informativeness}
		\tikzfig{parametrized_informativeness_v2}
	\end{equation}
	This amounts to allowing the garbling map $c$ in the definition of the informativeness ordering to depend on the prior in addition to its dependence on the data variable of the experiment $f$ used to simulate $g$. 
	Since the condition (\ref{eq:parametrized_informativeness}) gives rise to an ordering on morphisms of $\cC$ that differs from those considered in \Cref{sec:informativeness,sec:BSS}, we give it a new name.
	\begin{notation}
		Given two morphisms $f$ and $g$ in a Markov category $\cC$ with a common domain $\Theta$, we say that $f$ is \emph{more informative} than $g$ \emph{in the Bayesian sense}, denoted $f \Bayesinf g$, if we have $\param{f \asinf{\prsamp} g}$ in $\cC_{P \Theta}$ as expressed by the existence of a prior-dependent garbling $c$ that achieves the conversion shown in \cref{eq:parametrized_informativeness}.
	\end{notation}
	
	Before the exposition of our parametric BSS Theorem itself, we present some results on how the Bayesian informativeness ordering relates to the usual prior-independent one. 
	While it is obvious that the existence of a prior-independent $c$ with $c \, f = g$ implies the existence of a prior-dependent $c$ as above---namely by choosing that dependence to be the trivial one which discards the prior---the question of whether the converse also holds is far more involved. 
	We start addressing it with the following simple observation, which shows that the garbling map can be chosen in a uniform way for all distributions which are absolutely continuous with respect to a given one.
	
	\begin{proposition}\label{prop:informativeness_as}
		Let $\nu \colon I \to \Theta$ be arbitrary.
		If $f$ is more informative than $g$ in the Bayesian sense, then there exists a morphism $c_{\nu} \colon X \to Y$ in $\cC$ such that we have $c_\nu \, f \ase{\mu} g$, i.e.\
		\begin{equation}\label{eq:informativeness_n-as}
			\tikzfig{informativeness_n-as_v2}
		\end{equation}
		for every $\mu \colon I \to \Theta$ with $\nu \gg \mu$.
	\end{proposition}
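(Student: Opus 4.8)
The plan is to obtain $c_\nu$ by substituting the fixed distribution $\nu$ as the prior into the prior-dependent garbling that witnesses $f \Bayesinf g$, and then to transport the resulting almost sure equality from $\nu$ to every $\mu \ldom \nu$ via \Cref{def:domination}.

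First I would unfold the hypothesis: $f \Bayesinf g$ means that $\param{f \asinf{\prsamp} g}$ holds in $\cC_{P\Theta}$, i.e.\ there is a morphism $c \colon X \otimes P\Theta \to Y$ in $\cC$ satisfying \eqref{eq:parametrized_informativeness}. Since $\cC$ is representable by \Cref{ass:ascr}, the given $\nu \colon I \to \Theta$ has an adjunct $\nu^\sharp \colon I \to P\Theta$, which is deterministic and satisfies $\samp_\Theta \circ \nu^\sharp = \nu$. I then set
\[
	c_\nu \;\coloneqq\; c \circ (\id_X \otimes \nu^\sharp) \;\colon\; X \longrightarrow Y .
\]

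The key step is to precompose equation \eqref{eq:parametrized_informativeness} with $\nu^\sharp \colon I \to P\Theta$. On the left-hand side the copied prior is fed both to the sampling map and to $c$; because $\nu^\sharp$ is deterministic it may be duplicated freely, so one copy produces $\samp_\Theta \circ \nu^\sharp = \nu$ while the other turns $c$ into $c_\nu$, and what remains is precisely the diagram expressing $c_\nu\, f \ase{\nu} g$ in the sense of \Cref{def:ase}. On the right-hand side the prior is discarded in the $g$-branch, so precomposition with $\nu^\sharp$ yields the matching side of the same diagram. Hence $c_\nu\, f \ase{\nu} g$.

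Finally, for any $\mu \colon I \to \Theta$ with $\mu \ldom \nu$, \Cref{def:domination} applied to the parallel pair $c_\nu\, f,\, g \colon \Theta \to Y$ upgrades $c_\nu\, f \ase{\nu} g$ to $c_\nu\, f \ase{\mu} g$, which is exactly \eqref{eq:informativeness_n-as}; note that the single morphism $c_\nu$ works simultaneously for all such $\mu$. The only delicate point is the bookkeeping in the precomposition step --- checking that inserting the deterministic element $\nu^\sharp$ into the copied-prior structure of \eqref{eq:parametrized_informativeness} genuinely reproduces the almost-sure-equality diagram for $c_\nu f$ and $g$ with respect to $\nu$ --- and this is where determinism of $\nu^\sharp$ together with $\samp_\Theta \circ \nu^\sharp = \nu$ are used. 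The argument uses only representability; neither conditionals nor \as{}-compatibility enter.
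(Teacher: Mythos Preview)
Your proposal is correct and follows essentially the same route as the paper: define $c_\nu$ by feeding the deterministic point $\nu^\sharp$ into the prior slot of the Bayesian garbling $c$, precompose \eqref{eq:parametrized_informativeness} with $\nu^\sharp$ to obtain $c_\nu f \ase{\nu} g$, and then invoke \Cref{def:domination} to pass to every $\mu \ldom \nu$. Your added remark that only representability is used (not conditionals or \as{}-compatibility) is accurate and worth keeping.
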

	Recall that $\nu \gg \mu$ denotes the absolute continuity ordering from \Cref{def:domination}.
	\begin{proof}
		In particular, $c_{\nu}$ can be constructed from $c$ in \cref{eq:parametrized_informativeness} as
		\begin{equation}
			\label{eq:informativeness_n-as2}
			\tikzfig{conditionalconversion_v2}
		\end{equation}
		which makes \eqref{eq:informativeness_n-as} hold with $\nu$ in place of $\mu$. The claim then follows from the definition of $\nu \gg \mu$.
	\end{proof}
	
	\begin{corollary}\label{cor:discrete_reduction}
		If the hypothesis object $\Theta$ is discrete, then
		\[
			f \succeq g \qquad \Longleftrightarrow \qquad f \Bayesinf g.
		\]
	\end{corollary}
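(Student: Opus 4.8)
The plan is to treat the two implications separately, since almost all of the substance is already packaged in \Cref{prop:informativeness_as} and in the defining property of discreteness.

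For the easy direction $f \succeq g \implies f \Bayesinf g$, I would take a prior-independent garbling $c \colon X \to Y$ with $c\,f = g$ and promote it to the morphism $\param{X \to Y}$ in $\cC_{P\Theta}$ represented by $c \circ (\discard_{P\Theta} \otimes \id_X) \colon P\Theta \otimes X \to Y$, i.e.\ the garbling that discards the prior and applies $c$. This visibly satisfies \eqref{eq:parametrized_informativeness}, so $\param{f \asinf{\prsamp} g}$ holds in $\cC_{P\Theta}$, which is exactly $f \Bayesinf g$. This is the observation already made in the paragraph preceding the corollary.

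For the converse $f \Bayesinf g \implies f \succeq g$, fix a morphism $m \colon I \to \Theta$ witnessing that $\Theta$ is discrete, so that \eqref{eq:full_support} holds for all morphisms out of $\Theta$. Applying \Cref{prop:informativeness_as} with $\nu \coloneqq m$ produces a morphism $c_m \colon X \to Y$ with $c_m\,f \ase{\mu} g$ for every $\mu \ldom m$; specializing to $\mu = m$ (using reflexivity of $\ldom$) gives $c_m\,f \ase{m} g$. Since $c_m\,f$ and $g$ are parallel morphisms $\Theta \to Y$ and $\Theta$ is discrete with respect to $m$, \eqref{eq:full_support} upgrades this to the plain equality $c_m\,f = g$, hence $f \succeq g$.

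I do not anticipate a genuine obstacle: the corollary is essentially a repackaging of \Cref{prop:informativeness_as} together with the definition of a discrete object. The only point deserving care is to confirm that \Cref{prop:informativeness_as} really yields $m$-almost-sure equality for the distribution $m$ itself, and not merely for those strictly dominated by it; this is handled by the reflexivity $m \ldom m$ noted above.
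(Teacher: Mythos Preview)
Your proposal is correct and follows essentially the same route as the paper: the forward direction is the trivial observation already made in the text, and for the converse you invoke \Cref{prop:informativeness_as} with $\nu$ equal to the discreteness witness $m$, specialize to $\mu = m$, and then use \eqref{eq:full_support} to remove the almost-sure qualifier. Your explicit remark that reflexivity $m \ldom m$ is what justifies the specialization $\mu = m$ is a nice clarification that the paper leaves implicit.
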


	\begin{proof}
		Discreteness implies that there is a distribution $\nu \colon I \to \Theta$ such that taking $\mu \coloneqq \nu$ makes $f \succeq g$ follow from \cref{eq:informativeness_n-as}.
	\end{proof}
	
	However, the existence of a prior-dependent garbling map does not imply the existence of a completely prior-independent one in general.
	We now present an explicit counterexample in $\BorelStoch$ based on an example due to Blackwell and Ramamoorthi~\cite{BR}.

	\begin{proposition}\label{prop:garbling_counterexample}
		In $\BorelStoch$, there are $f \colon \Theta \to X$ and $g \colon \Theta \to Y$ such that a garbling $c$ satisfying \cref{eq:parametrized_informativeness} exists, but there is no Markov kernel $c \colon X \to Y$ with $g = c \, f$.
	\end{proposition}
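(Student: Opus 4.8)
The plan is to adapt the classical example of Blackwell and Ramamoorthi~\cite{BR} of a statistic that is Bayes sufficient but not classically sufficient. Their construction provides standard Borel spaces $\Theta$, $\mathcal{X}$, $\mathcal{T}$, a measurable map $T \colon \mathcal{X} \to \mathcal{T}$, and a measurable family $(P_\theta)_{\theta\in\Theta}$ of probability measures on $\mathcal{X}$, equivalently a morphism $b \colon \Theta \to \mathcal{X}$ in $\BorelStoch$, with two features: \textbf{(i)} for every prior $m$ on $\Theta$ the hypothesis $\theta$ and the datum $x$ are conditionally independent given $T(x)$, with respect to the joint law given by $\theta \sim m$ and $x \sim P_\theta$ (Bayes sufficiency); and \textbf{(ii)} there is no Markov kernel $c \colon \mathcal{T} \to \mathcal{X}$ with $c \circ (T_* P_\theta) = P_\theta$ for all $\theta$ (failure of classical sufficiency). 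Setting $X := \mathcal{T}$, $Y := \mathcal{X}$, $f := T b \colon \Theta \to X$ and $g := b \colon \Theta \to Y$, feature (ii) is verbatim the statement that no Markov kernel $c \colon X \to Y$ satisfies $g = c\, f$, so the only thing left to prove is $f \Bayesinf g$.

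Unwinding the definitions of $\cC_{P\Theta}$, of $\prsamp$, and of the conversion in \cref{eq:parametrized_informativeness}, the relation $f \Bayesinf g$ asks exactly for a Markov kernel $c \colon X \otimes P\Theta \to Y$ such that, writing $c_m := c(\ph, m)$, one has $c_m \circ (T_* P_\theta) = P_\theta$ for every prior $m$ and $m$-almost every $\theta$. I would obtain such a $c$ as a conditional, with respect to the first output, of the Markov kernel $w \colon P\Theta \to X \otimes Y$ that sends a prior $m$ to the joint law of $\bigl(T(x), x\bigr)$ for $x$ drawn from the prior-predictive distribution $\int_\Theta P_\theta\, m(d\theta)$; this conditional exists because $\BorelStoch$ has conditionals (\Cref{ex:borelstoch_has_conds}), and it depends measurably on $m$ by construction. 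Explicitly, $c_m(\ph \mid t)$ is then a regular conditional distribution of $x$ given $T(x) = t$ under the prior-predictive law.

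To conclude, one verifies that this $c$ does the job. Fix a prior $m$ and let $\pi_m$ denote the joint law of $(\theta, x)$ with $\theta \sim m$ and $x \sim P_\theta$. Since $c_m(\ph \mid T(x))$ is a version of the conditional law of $x$ given $T(x)$ under $\pi_m$, the Bayes-sufficiency feature (i) forces it to be, $\pi_m$-almost surely, also a version of the conditional law of $x$ given the pair $(T(x), \theta)$; integrating this out against $T_* P_\theta$ then returns $P_\theta$, for $m$-almost every $\theta$. This is precisely the $\prsamp$-almost sure equality demanded by \cref{eq:parametrized_informativeness}, so $f \Bayesinf g$ holds while $f \succeq g$ fails.

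The step I expect to require the most care is not the categorical bookkeeping but matching the input to the Blackwell--Ramamoorthi construction: one has to check that their example really yields feature (ii) in the strong form used here, namely the nonexistence of \emph{any} reconstruction kernel $c$ with $c \circ (T_* P_\theta) = P_\theta$ and not merely of a single regular conditional of $x$ given $T$ valid for every $\theta$, and that their notion of Bayes sufficiency is the prior-by-prior conditional independence of (i) that drives the final computation. Both are standard features of that example, but they are the points that must be stated precisely; the remainder is routine unwinding inside $\BorelStoch_{P\Theta}$.
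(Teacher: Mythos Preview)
Your approach is correct and takes a genuinely different route from the paper's proof, even though both start from the Blackwell--Ramamoorthi example. The paper sets up the example explicitly (with $X=\{a,b\}^{\N}$, $Y=\{a,b\}$, $\Theta\subset PX$ consisting of measures whose coordinate projections converge in probability to $a$ or $b$, $f=\samp$ and $g$ the limit indicator) and then builds the prior-dependent garbling $c\colon X\otimes P\Theta\to Y$ by hand: for each prior $\mu$ it extracts, via Borel--Cantelli, a subsequence along which the sample converges $\nu$-almost surely for $\mu$-almost all $\nu$, and then reads off the limit; a substantial part of the argument is devoted to checking that this construction is measurable in $\mu$. Your construction instead obtains $c$ \emph{synthetically}, as a conditional in $\BorelStoch$ of the kernel $w\colon P\Theta\to X\otimes Y$, and then invokes Bayes sufficiency to verify that $c_m\circ T_*P_\theta=P_\theta$ for $m$-almost every $\theta$. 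This is arguably more in the spirit of the paper's categorical framework, and it sidesteps the measurability analysis entirely by appealing to \Cref{ex:borelstoch_has_conds}; the paper's approach, on the other hand, is fully self-contained and makes the mechanism (subsequence extraction) visible.

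The one point you correctly flag as delicate is feature (ii): failure of classical sufficiency of $T$ is \emph{a priori} only the nonexistence of a $\theta$-free regular conditional of $x$ given $T$, which is weaker than the nonexistence of any kernel $c$ with $c\circ T_*P_\theta=P_\theta$ for all $\theta$. In the specific Blackwell--Ramamoorthi example this stronger statement does hold, and it reduces to essentially the same fact the paper invokes (Blackwell's result in~\cite{blackwell_splifs} that no measurable $s\colon\{a,b\}^{\N}\to\{a,b\}$ reads off the limit in probability for every $\nu\in\Theta$); but you should make that reduction explicit rather than labeling it ``standard'', since it is the crux of the non-existence half.
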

	\begin{proof}
		The following arguments amount to showing that the example of Blackwell and Ramamoorthi~\cite{BR}, which proved that classical sufficiency and Bayesian sufficiency are not equivalent, also similarly in our context of comparison of experiments. The main difference is that our version of their example requires a more refined measurability analysis in the second part.

		As the spaces of outcomes of the experiments, consider the two standard Borel spaces
		\begin{align*}
			X &= \{a,b\}^\N,  &  Y &= \{a,b\}.
		\end{align*}
		Writing $\pi_n \colon \{a,b\}^\N \to \{a,b\}$ for the $n$-th product projection, we define the sets
		\begin{align*}
			\Theta_a	& \coloneqq \Set{ \nu \in PX \given \lim_{n \to \infty} \nu\bigl(\pi_n^{-1}(a)\bigr) = 1 },	\\[2pt]
			\Theta_b	& \coloneqq \Set{ \nu \in PX \given \lim_{n \to \infty} \nu\bigl(\pi_n^{-1}(b)\bigr) = 1 },
		\end{align*}
		to be thought of as containing those distributions for which the countably many $\{a,b\}$-valued random variables represented by a distribution on $X$ converge in probability to $a$ or $b$,
		respectively. 
		Since we have
		\[
			\Theta_a = \bigcap_{k \in \N} \, \bigcup_{n \in \N} \, \Set*[\Big]{ \nu \in PX \given \nu\bigl(\pi_n^{-1}(b)\bigr) \le 2^{-k} },
		\]
		the set $\Theta_a$ is a measurable subset of $PX$, and we take it to be equipped with the induced $\sigma$-algebra.
		Moreover, it constitutes a standard Borel space since $PX$ does too.
		A similar argument shows that $\Theta_b$ is likewise a standard Borel space. 

		We now consider the (disjoint) union $\Theta \coloneqq \Theta_a \cup \Theta_b$, and let a measurable map $f \colon \Theta \to X$ be defined by the restriction of $\samp \colon PX \to X$ to $\Theta \subseteq PX$, while the measurable map $g \colon \Theta \to Y$ is given by
		\[
			g(\nu) \coloneqq	
					\begin{cases}
						a	& \text{if } \nu \in \Theta_a,	\\
						b	& \text{if } \nu \in \Theta_b.
					\end{cases}
		\]
		That is, $g(\nu) = a$ indicates that the sequence of random variables distributed according to $\nu$ converges in probability to $a$, and similarly for $g(\nu) = b$.

		We first argue that there is no garbling from $f$ to $g$, meaning that there is no Markov kernel $c \colon X \to Y$ with $g = c \, f$. 
		If such a $c$ did exist, then one could define a measurable map $s \colon X \to \{a,b\}$ by
		\[
			s(x) \coloneqq	
				\begin{cases}
					a	& \textrm{if } c \bigl( \{a\} \big| x \bigr) > 0,	\\
					b	& \textrm{if } c \bigl( \{a\} \big| x \bigr) = 0.
				\end{cases}
		\]
		Since for $\nu \in \Theta_a$ we have $c(\{a\}|\ph) \ase{\nu} 1$, it follows that also $s \ase{\nu} a$ holds. 
		Similarly, for every $\nu \in \Theta_b$ we have $s \ase{\nu} b$. 
		However, Blackwell has shown in \cite{blackwell_splifs} that such an $s$ does not exist, and therefore neither does $c$.

		Second, we construct a prior-dependent $c \colon X \otimes P\Theta \to Y$ such that \cref{eq:parametrized_informativeness} holds. 
		In particular, note that even though $\Theta$ is defined so that the sequence of random variables converges in probability to either $a$ or $b$, we cannot extract the limit from a sample sequence alone per the argument of the previous paragraph. 
		We thus show that, given a prior $\mu \in P\Theta$, one can always find a subsequence of the random variables that converges $\nu$-\emph{almost surely} for $\mu$-almost all measures $\nu \in \Theta$.
		
		This requires a bit of preparation. 
		For fixed $\mu \in P\Theta$, consider the average measures defined for all $S \in \Sigma_X$ as
		\begin{align}
			\overline{\nu}_a(S) &\coloneqq \int_{\Theta_a} \! \nu(S) \, \mu(d\nu), &
			\overline{\nu}_b(S) &\coloneqq \int_{\Theta_b} \! \nu(S) \, \mu(d\nu).
		\end{align}
		Note that these are both subnormalized, and that $\overline{\nu}_a + \overline{\nu}_b$ is exactly the expected probability measure $(Pf)(\mu)$ on $X$. 
		By the monotone convergence theorem, the assumed convergence in probability of the product projections still holds on average, i.e.\ we have
		\begin{align}\label{eq:convergence_in_prob1}
			\lim_{n \to \infty} \overline{\nu}_a \bigl( \pi_n^{-1}(b) \bigr) &= 0,  &
			\lim_{n \to \infty} \overline{\nu}_b \bigl(\pi_n^{-1}(a) \bigr) &= 0.
		\end{align}
		For $k \in \N$, let then $n_k(\mu)$ be the smallest natural number such that both the inequalities
		\begin{align}
			\label{eq:convergence_in_prob}
			\overline{\nu}_a \bigl(\pi_{n_k(\mu)}^{-1}(b) \bigr) &\le 2^{-k} & 
			\overline{\nu}_b \bigl(\pi_{n_k(\mu)}^{-1}(a) \bigr) &\le 2^{-k}
		\end{align}
		are satisfied.
		The existence of $n_k(\mu)$ is guaranteed by (\ref{eq:convergence_in_prob1}).
		
		We now show that the map $\mu \mapsto n_k(\mu)$ is measurable. To see this, notice that for every $m \in \N$ we have $n_k(\mu) \le m$ if and only if both
		\[
			\int 1_{\Theta_a} \, \nu \bigl( \pi_m^{-1}(b) \bigr) \, \mu(d\nu) \le 2^{-k} \quad \text{ and } \quad
			\int 1_{\Theta_b} \, \nu \bigl( \pi_m^{-1}(a) \bigr) \, \mu(d\nu) \le 2^{-k}
		\]
		are satisfied.
		Preimages of the upper sets $\Set{m \given k \le m} \subseteq \mathbb{N}$ are thus measurable by the inequalities above, because integration of a fixed measurable function on $\Theta$ is measurable in $\mu \in P\Theta$.
		Since the discrete $\sigma$-algebra on $\N$ is generated by the upper sets, the assignment $\mu \mapsto n_k(\mu)$ is measurable.

		Thanks to inequalities~\eqref{eq:convergence_in_prob} and the Borel-Cantelli lemma, we have that the measurable sets
		\begin{align*}
			S_a(\mu)	& \coloneqq \Set{ x \in X \given \pi_{n_k(\mu)}(x) = a \textrm{ for infinitely many } k },	\\
			S_b(\mu)	& \coloneqq \Set{ x \in X \given \pi_{n_k(\mu)}(x) = b \textrm{ for infinitely many } k },
		\end{align*}
		satisfy
		\[
			\overline{\nu}_a \bigl( S_b(\mu) \bigr) = \overline{\nu}_b \bigl( S_a(\mu) \bigr) = 0.
		\]
		But since this must then also hold $\mu$-almost surely for all the measures which form the averages $\overline{\nu}_a$ and $\overline{\nu}_b$, it follows that $\nu(S_b) = 0$ also holds for $\mu$-almost all $\nu \in \Theta_a$, and similarly $\nu(S_a) = 0$ for $\mu$-almost all $\nu \in \Theta_b$. 
		But then we can decide whether $\nu \in \Theta_a$ or $\nu \in \Theta_b$ simply by testing membership of its sample in $S_a$.
		In other words, the function
		\[
			c \colon X \otimes P\Theta \rightarrow \{a,b\}, \qquad
			(x,\mu) \mapsto	
				\begin{cases}
					a	& \text{if } x \in S_a(\mu),	\\
					b	& \text{if } x \in \overline{S_a(\mu)},
				\end{cases}
		\]
		makes the desired \cref{eq:dataprocessing2} hold, since $c(x,\mu) = a$ for $\nu$-almost every $x \in X$ and $\mu$-almost every $\nu \in \Theta_a$, and similarly $c(x,\mu) = b$ for $\nu$-almost every $x$ and $\mu$-almost every $\nu \in \Theta_b$.

		It remains to be shown that $c$ is actually measurable. 
		This follows because the complement $\overline{S_a(\mu)}$ is given by the countable disjoint union
		\[
			\overline{S_a(\mu)} \,=\, \bigcup_{F} \, \Set*[\Big]{ x \in \{a,b\}^\N \given \pi_{n_k(\mu)}(x) = b \iff k \in F }
		\]
		over finite $F \subseteq \mathbb{N}$.
		Since for every fixed $F$ the set of all pairs $(x,\mu)$ for which the internal condition holds is measurable by the measurability of $\mu \mapsto n_k(\mu)$, the set of all pairs $(x,\mu)$ with $c(x,\mu) = b$ is likewise measurable. 
		Therefore, $c$ is measurable, thereby making $f$ indeed more informative than $g$ in the Bayesian sense.
	\end{proof}
	
	We continue with the general theory, aiming at a BSS Theorem for characterizing informativeness in the Bayesian sense.

	The standard experiment of a statistical model $\param{f \colon \Theta \to X}$ in $\cC_{P\Theta}$, as introduced in \Cref{def:stn_exp}, now takes the form
	\begin{equation}\label{eq:stndexp3}
		\tikzfig{stndexp3}
	\end{equation}
	Similar to before, this (typically non-deterministic) morphism takes a hypothesis as its first argument, a prior as its second argument, and outputs the distribution over posteriors which results if the given hypothesis is true and Bayesian updating is used with respect to the given prior. 
	Again as before, in order for $\param{\hat{f}}$ to be well defined up to $\param{\as{\prsamp}}$ equality, we need $\cC_{P\Theta}$ to be \as{}-compatibly representable as opposed to merely being representable.
	However, this is guaranteed by \Cref{ass:ascr} and \Cref{lem:parametric_ascr}.
	
	Next, the standard measure $\param{\hat{f}_{\prsamp} \coloneqq \hat{f} \circ \prsamp}$ in $\cC_{P\Theta}$ is represented in $\cC$ by
	\begin{equation}\label{eq:stndmeas2}
		\tikzfig{stndmeas2}
	\end{equation}
	and \Cref{lem:stnd_enc} becomes:
	\begin{equation}\label{eq:stnd_enc3}
		\tikzfig{stnd_enc3}
	\end{equation}
	Consequently, the statement of \Cref{thm:dilation_criterion} in $\cC_{P\Theta}$ with respect to $\param{\prsamp}$ reads as follows.
	\begin{corollary}\label{thm:dilation_criterion2}
		Let $\cC$ be an \as{}-compatibly representable Markov category with conditionals.
		Consider two morphisms $f \colon \Theta \to X$ and $g \colon \Theta \to Y$ in $\cC$ with $\hat{f}_\prsamp$ and $\hat{g}_\prsamp$ given by the right hand side of \cref{eq:stndmeas2}.
		
		Then the following are equivalent:
		\begin{enumerate}
			\item \label{it:informativeness_criterion2} $f \Bayesinf g$, i.e.\ there exists a morphism $c \colon X \tensor P\Theta \to Y$ satisfying
				\begin{equation}\label{eq:dataprocessing2}
					\tikzfig{dataprocessing2_blue}
				\end{equation}
				
			\item \label{it:dilation_criterion2} $\param{\hat{f}_{\prsamp} \sqsubseteq \hat{g}_{\prsamp}}$, i.e.\ there exists a $\hat{g}_\prsamp$-dilation $t \colon P\Theta \tensor P\Theta \to P\Theta$ satisfying 
			\begin{equation}\label{eq:stndmeaspreserving2}
				\tikzfig{stndmeaspreserving2}
			\end{equation}
		\end{enumerate}
	\end{corollary}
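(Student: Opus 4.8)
The plan is to derive this corollary as the instantiation of \Cref{thm:dilation_criterion} in the parametric Markov category $\cC_{P\Theta}$, taking as the prior $m$ the morphism $\param{\prsamp \colon I \to \Theta}$ of \cref{eq:param_full_support}. First I would verify that $\cC_{P\Theta}$ satisfies the hypotheses of \Cref{thm:dilation_criterion}: it is \as{}-compatibly representable by \Cref{lem:parametric_ascr} (using \Cref{ass:ascr}), and it has conditionals by the lemma on parametric Markov categories in \Cref{sec:parametric}, in both cases because $\cC$ does. Hence \Cref{thm:dilation_criterion} applies verbatim inside $\cC_{P\Theta}$, with $\Theta$, $f$, $g$, $m$ replaced by $\Theta$, $\param{f}$, $\param{g}$ and $\param{\prsamp}$.

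The substance of the proof is then the translation of the two resulting conditions back into $\cC$. For the first condition, $\param{f \asinf{\prsamp} g}$ in $\cC_{P\Theta}$ is by the notation fixed just before the corollary exactly the relation $f \Bayesinf g$; spelling out composition and tensor in $\cC_{P\Theta}$ via the copy map $\cop_{P\Theta}$ and $\param{\as{\prsamp}}$ equality as precomposition with $\samp_\Theta$, this unfolds to the existence of $c \colon X \tensor P\Theta \to Y$ satisfying \cref{eq:dataprocessing2}. For the second condition, the standard experiment $\param{\hat{f}}$ and standard measure $\param{\hat{f}_{\prsamp}}$ of $\param{f}$, defined through \Cref{def:stn_exp} and \Cref{def:stnd_meas}, are represented in $\cC$ by the diagrams of \cref{eq:stndexp3} and \cref{eq:stndmeas2} --- a computation already carried out above, using that a Bayesian inverse of $\param{f}$ with respect to $\param{\prsamp}$ in $\cC_{P\Theta}$ is realized in $\cC$ by a conditional of the prior behavior $\mathfrak{f}$ with respect to $X$, and that \Cref{lem:stnd_enc} takes the form \cref{eq:stnd_enc3}. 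A $\param{\hat{g}_{\prsamp}}$-dilation $\param{t \colon P\Theta \to P\Theta}$ in $\cC_{P\Theta}$ is by definition a $\cC$-morphism $t \colon P\Theta \tensor P\Theta \to P\Theta$; the equation $\param{\hat{f}_{\prsamp} = t\,\hat{g}_{\prsamp}}$ unfolds to \cref{eq:stndmeaspreserving2}, and the dilation property of $\param{t}$ --- via \Cref{lem:dilationfree} applied inside $\cC_{P\Theta}$, whose hypotheses hold by \Cref{lem:parametric_ascr} --- yields the mean-preservation equation in the required free-algebra form. Matching these up identifies conditions \ref{it:informativeness_criterion2} and \ref{it:dilation_criterion2} with conditions (i) and (ii) of the instantiated theorem, so the corollary follows.

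The main obstacle I anticipate is purely bookkeeping: one has to check that the translations of composition, tensor, copying and $\as{}$-equality from $\cC_{P\Theta}$ to $\cC$ --- each of which either distributes via $\cop_{P\Theta}$ or discards the parameter object $P\Theta$ --- reproduce exactly the string diagrams \cref{eq:stndexp3}, \cref{eq:stndmeas2} and \cref{eq:stndmeaspreserving2}, and not variants that differ in the placement of copies of $P\Theta$. Since the relevant translations have essentially been performed in the discussion leading up to the corollary, the actual proof collapses to invoking \Cref{thm:dilation_criterion} in $\cC_{P\Theta}$ and citing those identifications.
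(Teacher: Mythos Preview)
Your proposal is correct and follows exactly the paper's approach: the paper derives this corollary by instantiating \Cref{thm:dilation_criterion} in the parametric Markov category $\cC_{P\Theta}$ with prior $\param{\prsamp}$, noting only that every $\param{\hat{g}_{\prsamp}}$-dilation in $\cC_{P\Theta}$ is represented by a $\hat{g}_\prsamp$-dilation in $\cC$ and vice versa. Your write-up is in fact more detailed than the paper's one-line justification, spelling out the verification of hypotheses via \Cref{lem:parametric_ascr} and the translation bookkeeping that the paper leaves implicit in the discussion preceding the corollary.
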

	Indeed, one can easily check that every $\param{\hat{g}_{\prsamp}}$-dilation in $\cC_{P\Theta}$ is represented by a $\hat{g}_\prsamp$-dilation in $\cC$ (and vice versa), so that the claim follows.
	
	Instantiating this in $\BorelStoch$ results in the following: 
	A statistical experiment represented by a Markov kernel $f$ is more informative than one represented by $g$ for \emph{every} prior if and only if the resulting expected distribution over posteriors of the first experiment is more spread out than that of the second experiment for \emph{every} prior, where in both parts of this statement the dependence on the prior is assumed measurable.

\bibliographystyle{plainurl}
\bibliography{markov}

\end{document}